\documentclass[9pt,reqno]{amsart}  

\usepackage[english]{babel}
\usepackage[p,osf]{cochineal}
\usepackage[scale=.95,type1]{cabin}
\usepackage[zerostyle=c,scaled=.94]{newtxtt}
\usepackage[T1]{fontenc} 
\usepackage[utf8]{inputenc} 
\usepackage{amsthm}
\usepackage{amsmath}
\usepackage{amssymb}
\usepackage{amsfonts}
\usepackage{amsaddr}
\usepackage{xspace}
\usepackage{enumitem} 
\usepackage{csquotes}
\usepackage{xcolor}
\usepackage[colorlinks]{hyperref}
\pdfstringdefDisableCommands{\def\eqref#1{(\ref{#1})}}
\hypersetup{colorlinks, breaklinks, citecolor=teal,filecolor=black}
\usepackage{graphicx}
\usepackage{mathtools}
\mathtoolsset{centercolon}
\usepackage{bm}
\usepackage{pgfkeys}
\usepackage{pgf,interval}
\usepackage{tikz,pgf}
\intervalconfig{soft open fences}

\DeclareMathOperator{\Tr}{Tr}

\DeclareMathOperator{\E}{\mathbf{E}}

\DeclareMathOperator{\Av}{Av}
\DeclareMathOperator{\Cyc}{Cyc}
\DeclareMathOperator{\Gr}{Gr}

\newcommand{\ov}{\overline}

\newcommand{\ii}{\mathrm{i}}
\newcommand{\cred}[1]{{\color{red} #1}}

\ifdefined\C
\renewcommand{\C}{\mathbf{C}}
\else
\newcommand{\C}{\mathbf{C}}
\fi

\newcommand{\un}{\underline}
\newcommand{\vx}{\bm{x}}

\newcommand{\bu}{\bm{u}}
\newcommand{\bq}{\bm{q}}

\newcommand{\wt}{\widetilde}
\newcommand{\wh}{\widehat}

\newcommand{\R}{\mathbf{R}}
\newcommand{\N}{\mathbf{N}}

\newcommand{\cG}{\mathcal{G}}
\newcommand{\cO}{\mathcal{O}}
\newcommand{\co}{{\scriptstyle\mathcal{O}}}

\newcommand{\dif}{\operatorname{d}\!{}}
\newcommand\restr[3]{{%
  \left.\kern-\nulldelimiterspace %
  #1 %
  \vphantom{\big|} %
  \right|_{#2}^{#3} %
  }}
\DeclarePairedDelimiter{\braket}{\langle}{\rangle}%
\DeclarePairedDelimiter{\abs}{\lvert}{\rvert}%
\DeclarePairedDelimiter{\norm}{\lVert}{\rVert}%
\providecommand\given{}
\newcommand\SetSymbol[1][]{\nonscript\:#1\vert\allowbreak\nonscript\:\mathopen{}}
\DeclarePairedDelimiterX{\tuple}[1](){\renewcommand\given{\SetSymbol[\delimsize]}#1}
\DeclarePairedDelimiterX{\set}[1]\{\}{\renewcommand\given{\SetSymbol[\delimsize]}#1}
\DeclarePairedDelimiterXPP{\landauO}[1]{\cO}(){}{#1}
\DeclarePairedDelimiterXPP{\landauo}[1]{\co}(){}{#1}
\DeclarePairedDelimiterXPP{\landauOprec}[1]{\cO_\prec}(){}{#1}
\DeclarePairedDelimiterXPP{\landauOstd}[1]{\cO_\prec^2}(){}{#1}
\DeclarePairedDelimiterXPP{\landauOE}[1]{\cO_\prec^1}(){}{#1}
\DeclarePairedDelimiterXPP{\landauOd}[1]{\cO_\mathrm{d}}(){}{#1}

\usepackage[giveninits=true,url=false,doi=false,isbn=false,eprint=true,datamodel=mrnumber,sorting=nty,maxcitenames=4,maxbibnames=99,backref=false,block=space,backend=biber,bibstyle=phys]{biblatex} 
\AtEveryBibitem{\clearfield{month}}
\AtEveryCitekey{\clearfield{month}} 
\renewbibmacro{in:}{}
\ExecuteBibliographyOptions{eprint=true}
\DeclareFieldFormat[article]{title}{\emph{#1}} 
\DeclareFieldFormat{mrnumber}{\ifhyperref{\href{http://www.ams.org/mathscinet-getitem?mr=#1}{\nolinkurl{MR#1}}}{\nolinkurl{#1}}}
\DeclareFieldFormat{pmid}{\ifhyperref{\href{https://www.ncbi.nlm.nih.gov/pubmed/#1}{\nolinkurl{PMID#1}}}{\nolinkurl{#1}}}
\DeclareFieldFormat{eprint}{\ifhyperref{\href{https://arxiv.org/abs/#1}{\nolinkurl{arXiv:#1}}}{\nolinkurl{#1}}}
\renewbibmacro*{doi+eprint+url}{%
  \iftoggle{bbx:doi}{\printfield{doi}}{}
  \newunit\newblock%
  \printfield{mrnumber}%
  \newunit\newblock%
  \printfield{pmid}%
  \newunit\newblock%
  \printfield{eprint}%
  \iftoggle{bbx:url}{\usebibmacro{url+urldate}}{}}
\usetikzlibrary{calc}
\usetikzlibrary{arrows}
\bibliography{bibclt}

\numberwithin{equation}{section} 
\newtheorem{theorem}{Theorem}[section]
\newtheorem{assumption}[theorem]{Assumption}
\newtheorem{lemma}[theorem]{Lemma}
\newtheorem{proposition}[theorem]{Proposition}

\newtheorem{remark}[theorem]{Remark}

\date{\today}
\author{Giorgio Cipolloni \and L\'aszl\'o Erd\H{o}s}
\address{IST Austria, Am Campus 1, 3400 Klosterneuburg, Austria}
\author{Dominik Schr\"oder\(^{\dagger}\)}  
\address{Institute for Theoretical Studies, ETH Zurich, Clausiusstr.\ 47, 8092 Zurich, Switzerland}
\email{giorgio.cipolloni@ist.ac.at} 
\email{lerdos@ist.ac.at}
\email{dschroeder@ethz.ch}
\thanks{\(^{\dagger}\)Supported by Dr.\ Max R\"ossler, the Walter Haefner Foundation and the ETH Z\"urich Foundation}
\subjclass[2010]{60B20, 15B52} 
\keywords{Local Law, Dyson Brownian motion, Stochastic eigenstate equation, Eigenvector moment flow}
\title{Normal fluctuation in quantum ergodicity  for Wigner matrices}
\date{\today}

\begin{document} 

\begin{abstract} 
We consider the quadratic form of a general  deterministic matrix 
on the eigenvectors of an \(N\times N\) Wigner matrix and prove that it has Gaussian fluctuation
for each bulk eigenvector in the large \(N\) limit. The proof is a combination of the
energy method for the Dyson Brownian motion inspired by~\cite{2005.08425}
and our recent multi-resolvent local laws~\cite{2012.13215}.
\end{abstract}

\thispagestyle{empty}

\maketitle

\section{Introduction}
Quantum Unique Ergodicity (QUE) in a disordered or chaotic quantum system asserts
that the eigenvectors of the Hamilton operator tend to become uniformly distributed
in the phase space, see~\cite{MR0402834, MR818831, MR916129, MR1266075, MR1810753, MR3961083}
for the seminal  results and~\cite{2012.13215} for more recent references.
 We study a particularly strong form of this phenomenon 
for Wigner random matrices, the simplest prototype of a fully chaotic Hamiltonian.
These are  \(N\times N\)  random Hermitian   matrices \(W=W^*\)
with centred, independent, identically distributed \emph{(i.i.d.)} entries up to the symmetry constraint \(w_{ab} = \ov{w_{ba}}\).
Let \(\set{\bu_i}_{i=1}^N\) be an orthonormal eigenbasis of \(W\) corresponding to the eigenvalues 
\({\bm \lambda} = (\lambda_i)_{i=1}^N\) listed in increasing order.
Recently we showed~\cite{2012.13215} that for any  deterministic matrix \(A\)  with \(\|A\|\le1\),
the eigenvector overlaps \(\braket{\bu_i, A \bu_i}\)  
converge to  \(\braket{A}:=\frac{1}{N}\Tr A\), the normalized trace of \(A\), in the large \(N\) limit. More generally,
we proved that 
\begin{equation}\label{eq:eth}
  \max_{i,j}\abs[\Big]{\braket{ \bu_i, A \bu_j}-\braket{A}\delta_{ij}} 
  \lesssim \frac{N^\epsilon}{\sqrt{N}} 
\end{equation}  
holds with very high probability. We note that the bound~\eqref{eq:eth} is optimal for high-rank deterministic matrices \(A\) and is coined as the \emph{Eigenstate Thermalization Hypothesis} by 
Deutsch~\cite{9905246} and Srednicki~\cite{9962049}, see also~\cite[Eq.~(20)]{Dalessio2015}. 

The main result of the current paper, Theorem~\ref{theo:flucque},
 asserts that \(\braket{\bu_i, A \bu_i}\) has a  Gaussian fluctuation
on scale \(N^{-1/2}\), 
more precisely 
\begin{equation}\label{eq:flu}
   \sqrt{N} \Big[ \braket{\bu_i, A \bu_i}  - \braket{A}\Big]
\end{equation} 
converges to a normal distribution for any Hermitian observables \(A=A^*\) of high rank and
for any eigenvectors \({\bm u}_i\) whose eigenvalue  belongs to  the bulk of the spectrum.

For Gaussian ensembles and $A$ being a projection onto macroscopically many coordinates~\eqref{eq:flu} can be proven by using the special invariance property of the eigenvectors (see~\cite[Theorem 2.4]{MR3534074}). Our result concerns general Wigner matrices and it has two main features: it concerns individual eigenvectors \emph{and} it is valid for general high  rank observables.
We now explain related previous results which all addressed only one of these features.
First,  Gaussianity of~\eqref{eq:flu} after a small averaging 
in the index \(i\) has recently been established in~\cite[Theorem 2.3]{2012.13218} using resolvent methods. Second, fluctuations involving individual eigenvectors in  the bulk spectrum for general Wigner
matrices can only be accessed by the
Dyson Brownian motion  approach which has only been developed for finite rank observables~\cite{MR3606475, 2005.08425, MR4164858}.
We now explain the background of these concepts.

\subsection{Dyson Brownian motion for eigenvectors}
For the simplest rank one case,
\(A=|{\bm q}\rangle \langle {\bm q}|\) with some deterministic unit vector  \(\bq\), 
 Bourgade and Yau~\cite{MR3606475} showed  that the squared normalised overlaps
\(N |\langle \bu_i, \bq\rangle |^2\) converge in distribution  to the square of a standard Gaussian variable as \(N\to\infty\) (see also~\cite{MR3034787, MR2930379} for
the same result without DBM but under four moment matching condition in the bulk). Similar results have been obtained for deformed Wigner matrices~\cite{MR4164858}, for sparse matrices~\cite{MR3690289}, and for L\'evy matrices~\cite{MR4260468}. Note that both the scaling and the limit distribution for the rank one case are different from~\eqref{eq:flu}. The basic intuition
 is that the coordinates of \(\bu_i\) are roughly independent, thus the sum in \(\langle \bu_i, \bq\rangle=
 \sum_a \bu_i(a) \bq(a)\) obeys a central limit theorem (CLT) on scale \(N^{-1/2}\).
 In fact,~\cite{MR3606475} also considers the joint distribution of
 finitely many eigenvectors tested against one fixed vector \(\bq\)
and  the joint distribution 
of a single eigenvector with finitely many test vectors \(\bq_1, \bq_2, \ldots \bq_K\) for any fixed \(K\), independent of
 \(N\).
Very recently, Marcinek and Yau~\cite{2005.08425} have established that the overlaps of
finitely many eigenvectors \emph{and} finitely many orthogonal test vectors are also asymptotically independent (squared) normal. Their method is very general and also applies to a large class of other random matrix ensembles, such as sparse or L\'evy matrices.

The fundamental method behind all  results involving individual eigenvectors for general Wigner matrices
 is the Dyson Brownian motion (DBM) for
eigenvectors, also called the \emph{stochastic eigenstate equation} 
 generated by 
a simple matrix Brownian motion for \(W\), introduced by Bourgade and Yau in~\cite{MR3606475}. We briefly summarize the key steps in~\cite{MR3606475}
in order to highlight the new ideas we needed to prove the Gaussianity of~\eqref{eq:flu}. 

 For each fixed \(n\),  the evolution of the joint \(n\)-th order moments of the 
overlaps \(N |\langle \bu_i, \bq\rangle |^2\) for different \(i\)'s and fixed \(\bq\)
is  described by 
a system of parabolic evolution equations, called the \emph{eigenvector moment flow}. %
Interpreting 
each such overlap as a particle sitting at location \(i\) in the  discrete one dimensional
index space \([N] = \{ 1,2, \ldots , N\}\), the moment flow naturally corresponds to 
a Markovian jump  process of \(n\) particles. It turns out that the rate of a jump from site \(i\) to
site \(j\) is proportional with \(N^{-1}(\lambda_i-\lambda_j)^{-2}\).
Different \(\bq\)'s can be incorporated by appropriately
assigning \emph{colours} to the particles.
 By the fast local equilibration property of the DBM
the moments of \(N |\langle \bu_i, \bq\rangle |^2\) quickly become essentially independent of the index \(i\)
at least for  indices corresponding to nearby eigenvalues \(\lambda_i\),
hence they can be computed by locally averaging over \(i\). For example, in the 
simplest \(n=1\) case we have 
\begin{equation}\label{2ndmoment}
    f_i:= \E \big[ N |\langle \bu_i, \bq\rangle |^2 \big| {\bm \lambda}\big]  \approx  
  f_{i'}=    \E \big[ N |\langle \bu_{i'}, \bq\rangle |^2 \big| {\bm \lambda}\big],
     \quad  |i-i'|\ll N,
\end{equation}
already after a very short time \(t\gg |i-i'|/N\). Here we consider the conditional expectation of the
eigenvectors given that the eigenvalues are fixed.
 Since the global equilibrium of the
DBM is the constant function \(f_i=1\), equilibration directly implies \emph{smoothing} or \emph{regularisation} in the 
dependence on the indices \(i\).

On the other hand, by spectral theorem 
\begin{equation}\label{qGq}
  \langle \bq, \Im G(\lambda_i + \ii \eta)\bq\rangle = \frac{1}{N} \sum_{i'=1}^N \frac{\eta}{ (\lambda_i-\lambda_{i'})^2+\eta^2}
   N|\langle \bu_{i'}, \bq\rangle |^2,
\end{equation}
where \(G=G(z)=(W-z)^{-1}\) is the resolvent at a spectral parameter \(z\in \C\setminus\R\).
Using that the eigenvalues \(\lambda_{i'}\) are \emph{rigid}, i.e.\ they are very  close the
corresponding quantiles \(\gamma_{i'}\) of the Wigner semicircle density (see~\eqref{def:Omega} later), 
the \(i'\)-summation in~\eqref{qGq} is  a regularised averaging over indices \(|i'-i|\lesssim N\eta\).
Performing the \(i'\) summation in~\eqref{qGq} by using~\eqref{2ndmoment} we obtain
\[
   \E \big[ N |\langle \bu_i, \bq\rangle |^2 \big| {\bm \lambda}\big]
    \approx \frac{1}{\Im m_{sc}(\gamma_i)}  \E \big[ \langle \bq, \Im G(\gamma_i + \ii \eta)\bq\rangle\big| {\bm \lambda}\big],
\]
for times \(t\gg \eta\) where \(m_{sc}\) is the Stieltjes transform of the Wigner semicircle law.
 Choosing \(\eta\)  slightly above the local eigenvalue spacing, \(\eta=N^{-1+\epsilon}\)  in the bulk of the spectrum, we have
\(\langle \bq, \Im G(\gamma_i + \ii \eta)\bq\rangle\approx \Im m_{sc}(\gamma_i)\) 
not only in expectation but even in high probability by the \emph{isotropic local law} for Wigner matrices~\cite{MR3103909}.
Combining these inputs we obtain \(\E N |\langle \bu_i, \bq\rangle |^2\approx 1\) along the DBM after a short time
 \(t\gg N^{-1+\epsilon}\). A similar argument holds for higher moments.
Finally, the small Gaussian component added by the DBM can be removed by standard perturbation methods, by the
so called  \emph{Green function comparison} theorems.

\subsection{Dyson Brownian motion for general overlaps}
Given the method to handle \(N |\langle \bu_i, \bq\rangle |^2\) described above, 
the Gaussianity of overlaps \(\braket{\bu_i, A \bu_i}\) with a general high rank matrix \(A\) 
can be approached in two natural ways. We now explain  both of them to justify our choice.
 The first approach is to write
 \(A=\sum_{k=1}^N a_k |\bq_k\rangle \langle \bq_k|\) in spectral decomposition
with \(|a_k|\lesssim 1\) and an orthonormal set \(\{ \bq_k\}_{k=1}^N\) to have
\begin{equation}\label{spe}
   \braket{\bu_i, A \bu_i} = \sum_{k=1}^N a_k |\langle \bu_i, \bq_k \rangle|^2.
\end{equation} 
If all overlaps \(|\langle \bu_i, \bq_k \rangle|^2\), \(k=1, 2, \ldots, N\), were independent,  then the central 
limit theorem applied to the summation in~\eqref{spe} would prove the normality 
of  \(\braket{\bu_i, A \bu_i}\). This requires that the number of nonzero summands in~\eqref{spe}, the rank of \(A\), also goes to infinity as \(N\) increases. Hence, via the spectral decomposition of \(A\),
 the Gaussianity
of \(\braket{\bu_i, A \bu_i}\) appears rather an effect of the approximate independence of 
 the overlaps \(|\langle \bu_i, \bq_k  \rangle|^2\) for different \(k\)'s  than their actual limit distribution.
 The analysis of the eigenvector moment flow~\cite{MR3606475,2005.08425}  yields this independence for finitely many \(k\)'s,
 but it is not well suited for tracking  overlaps \(|\langle \bu_i, \bq_k  \rangle|^2\) 
 with a very large number of \(\bq_k\) vectors simultaneously. 
 Hence we discarded this approach.

The second natural approach is to generalise the eigenvector moment flow to moments of \(\braket{\bu_i, A \bu_i}\);
this has been first achieved in~\cite{MR4156609}.
Such  flow naturally  involves off-diagonal overlaps \(\braket{\bu_i, A \bu_j}\) as well.
Therefore, we need to describe conditional 
moments of the form \(\E \big[ \prod_{r=1}^n \braket{\bu_{i_r}, A \bu_{j_r}} \big|{\bm \lambda}\big]\) 
with different collections
of \emph{index pairs} \((i_1, j_1), (i_2, j_2), \ldots, (i_n, j_n)\)
with the constraint that every index appears even number of times.  Thus the relevant moments can naturally be represented
in an \(n\)-dimensional subset \(\Lambda^n\)  of \([N]^{2n}\) (see Section~\ref{sec:equivrep}).
Moreover,  in~\cite[Eq. (2.15)]{MR4156609}  a certain symmetrised linear combination of \(n\)-order moments, 
the \emph{perfect matching observable}  was found that satisfies a closed equation along the Dyson Brownian motion,
see~\eqref{eq:deff} and~\eqref{eq:1dequa}.  Moments of diagonal overlaps \(\braket{\bu_i, A \bu_i}\) can then be recovered 
from the perfect matching observable by setting all indices equal. Off-diagonal overlaps \(\braket{\bu_i, A \bu_j}\)
in general cannot be recovered (except in the \(n=2\) case using an additional  anti-symmetric (``fermionic'') version
of the perfect matching observable~\cite{MR4242625}). 

The main obstacle along this second approach
 is the lack of the analogue of~\eqref{qGq} for general overlaps \(\braket{\bu_i, A \bu_j}\). Consider the \(n=2\) case. A (regularized) local averaging in \emph{one} index yields
\begin{equation}\label{1av}
 \frac{1}{N} \sum_{i'=1}^N \frac{\eta}{ (\lambda_i-\lambda_{i'})^2+\eta^2}
    N|\braket{\bu_{i'}, A \bu_j} |^2 = \braket{\bu_j, A \Im G(\gamma_i + \ii \eta) A \bu_j}
\end{equation}
which still involves an eigenvector \(\bu_j\), hence is not accessible solely by  resolvent methods. 
Note that for \(A=|\bq\rangle  \langle \bq|\) the overlap  \(\braket{\bu_{i'}, A \bu_j}\) factorizes and
the averaging in \(i'\) can be done independently of \(j\).
For  general \(A\) we can handle  an averaging
in \emph{both} indices, i.e.\ we will  use that
\begin{equation}\label{2av}
 \frac{1}{N^2} \sum_{i', j'=1}^N \frac{\eta}{ (\lambda_i-\lambda_{i'})^2+\eta^2}\frac{\eta}{ (\lambda_j-\lambda_{j'})^2+\eta^2}
   N|\braket{\bu_{i'}, A \bu_{j'}} |^2 = \braket{  A \Im G(\gamma_i + \ii \eta) A \Im G(\gamma_j + \ii \eta)}.
\end{equation}
The normalised trace in the right hand side
 is accessible by  resolvent methods using the recent multi-\(G\) local law proven in~\cite[Prop. 3.4]{2012.13215}.
However, the generator of the eigenvector moment flow~\eqref{eq:1dkernel} involves the \emph{sum} of averaging  
operators as in~\eqref{1av} in all
coordinate directions and  not their \emph{product} as needed in~\eqref{2av}.  Higher moments (\(n>2\)) require
averaging in more than two indices
 simultaneously that is not apparently available in the generator.
To remedy this situation, we now review how the equilibration (smoothing) property of
the parabolic equation for the perfect matching observable can be manifested.

\subsection{Local smoothing of the eigenvector moment flow: an overview}
The technically simplest way to exploit the smoothing effect is via the maximum principle introduced in~\cite{MR3606475}. However, this requires that the generator is negative and itself has
the necessary local averaging property to obtain a quantity computable by a local law;
this is the case for eigenfunction overlaps as in~\eqref{qGq} but not for general overlaps  \(\braket{\bu_i, A \bu_j}\) in~\eqref{1av}. We remark that the maximum principle was also used  in~\cite{MR4156609} for more general overlaps, but only for
getting an a priori bound and not for establishing their distribution. For this cruder purpose 
a rougher bound on~\eqref{1av} was sufficient that could be iteratively improved, but always by an \(N^\epsilon\) factor
off  the optimal value. 

A technically much more demanding way to exploit the equilibration of the eigenvector moment flow
would be via homogenisation theory. In random matrix theory
 homogenisation was originally introduced for the Dyson eigenvalue flow in~\cite{MR3541852, MR3914908}
 by noticing that the generator is a discrete approximation of the one dimensional fractional
 Laplacian operator \(|p|=\sqrt{-\Delta}\) with translation invariant kernel \((x-y)^{-2}\) whose heat kernel is explicitly known.
Unfortunately, the eigenvector flow is more complicated and a good approximation with a well-behaving continuous heat
kernel is missing  although homogenisation might also be accessible via a sequence of maximum principles
as in~\cite{1812.10376}.  

Finally, the last and most flexible method for equilibration are the ultracontractivity estimates on the heat kernel that can
be obtained by the standard Nash method from Poincar\'e or Sobolev inequalities for the Dirichlet form
determined by the generator. In random matrix theory, these ideas have been introduced in~\cite{MR3372074} 
for the eigenvalue gap statistics 
and have later been used as a priori bounds for the homogenisation theory.  However, in the bulk regime
they are barely not sufficiently strong to get the necessary precision for individual eigenvalues; they had
to be complemented either by De Giorgi-Nash-Moser  H\"older regularity estimates~\cite{MR3372074} or 
homogenisation~\cite{MR3541852, MR3914908}. 

The  recent work by Marcinek and Yau~\cite{2005.08425} remedies this shortcoming of the ultracontractivity
bound by combining it with an energy method.  The main motivation of~\cite{2005.08425} 
was to consider the joint distribution of the overlaps  \(|\braket{\bu_i, \bq_k}|^2\) for several eigenvectors
and several test vectors simultaneously. The generator of the  resulting \emph{coloured eigenvector moment flow}
lacks the  positivity preserving property rendering the simple  argument via  maximum principle
impossible.  It turns out that 
this lack of positivity is due to a new \emph{exchange term} in the generator that is present only 
because several \(\bq_k\)'s (distinguished by colours) are considered simultaneously. However,  the
generator with the problematic exchange term is still positive in \(L^2\)-sense and its Dirichlet
form satisfies the usual Poincar\'e inequality from which ultracontractivity bounds can still be derived.
The additional smallness now comes from an effective decay of the \(L^2\)-norm of the solution
where local averaging like~\eqref{qGq} can be exploited. 

\subsection{Main ideas  of the current paper} The proof of Gaussianity of~\eqref{eq:flu} consists of three steps.

\begin{enumerate}[label=Step \arabic*.]
\item\label{energy} We use the energy method inspired by~\cite{2005.08425} together with 
the recent two-\(G\) local law from~\cite[Prop. 3.4]{2012.13215}
and more general multi-\(G\) local laws, proven in Section~\ref{sec:llaw}, to exploit 
an effective averaging mechanism  to reduce  the \(L^2\)-norm of the solution. 
In particular, to understand~\eqref{2av} we need a  two-\(G\) local law 
instead of the single-\(G\) isotropic law 
used in~\eqref{qGq}.
\item\label{ultracontractivity} We use an \(L^2\to L^\infty\) ultracontractivity bound of
 the colourblind eigenvector moment flow from~\cite[Proposition 6.29]{2005.08425}.
\item\label{gft step} The  first two steps   prove the Gaussianity of the overlap~\eqref{eq:flu}
for Wigner matrices with a tiny Gaussian component. With a standard Green function comparison
argument combined with the a priori bound~\eqref{eq:eth} proven in~\cite{2012.13215}
   we remove this Gaussian component. 
\end{enumerate}
\ref{ultracontractivity} and~\ref{gft step} are standard adaptations of existing previous results, so we focus only on explaining~\ref{energy}
We use the energy method in a very different way and for a  very different purpose than~\cite{2005.08425}, but 
for the same reason: its  robustness. In the standard energy argument, if \(f_t\) satisfies the 
parabolic evolution equation \(\partial_t f_t = {\mathcal L}_t f_t\) with a (time-dependent)
generator \({\mathcal L}_t\), then 
\[
  \frac{1}{2}\partial_t \| f_t\|_2^2 =\braket{ f_t, {\mathcal L_t}f_t} =: - D_t(f_t)\le 0,
\]
where \(D_t\) is the Dirichlet form (energy) associated to the generator \(\mathcal{L}_t\). The goal is to give a good lower bound 
\begin{equation}\label{Df}
D_t(f)\ge c\|f \|_2^2 - \mbox{error},
\end{equation}
and use a Gronwall argument to conclude an effective \(L^2\)-decay along the
dynamics. However, at this moment,  the Dirichlet form may first be replaced by a smaller one,
\(\widetilde D_t(f)\lesssim D_t(f) \), for which an effective lower bound~\eqref{Df} is easier to obtain. 
In our case, the gain comes from estimating the error term in~\eqref{Df} by exploiting the
local averaging in all directions  as in~\eqref{2av} so that we could use the  multi-\(G\) local law.

How to find \(\widetilde D\)?
Very heuristically, the generator of the eigenvector moment flow is a discrete analogue of \(|p_1|+|p_2|+ \cdots + |p_n|\), i.e.
 the \emph{sum} of \(|p|\)-operators along all the \(n\) coordinate directions in the  \(n\)-dimensional space \(\Lambda^n\).
 However, the necessary averaging in~\eqref{2av}
 is rather the product of these one dimensional operators.
  Normally, sums of first order differential
 operators cannot be compared with their product since they scale differently with the length. But our
 operators have a  short range regularization on the scale \(\eta\), i.e.\ they rather correspond to \(\eta^{-1}[1-e^{-\eta|p|}]\) than just \(|p|\) (see~\cite[Theorem 7.12]{MR1817225}). Therefore, we will prove the discrete analogue of the operator inequality
 \begin{equation}\label{replace}
  \frac{1}{\eta} \prod_{r=1}^n \left(1-e^{-\eta|p_r|}\right)\le C(n)  \sum_{r=1}^n \frac{1}{\eta}\left[1-e^{-\eta|p_r|	}\right]
 \end{equation}
 on \(\R^n\) and their quadratic forms will be the two Dirichlet forms \(\widetilde D\) and \(D\).
 Since the generator of \(\widetilde D\) now averages in all directions, these 
 averages yield traces of products \(\Im G A \Im G A \ldots \Im G A\) for which we have
 a good local law, hence the corresponding error in~\eqref{Df} is smaller than its naive a priori bound
 using only~\eqref{eq:eth}. This crucial gain provides the  additional smallness to 
 overcome the  general fact that  ultracontractivity bounds alone are barely not sufficient
 to gain sufficiently precise information on individual eigenvalues and eigenvectors in the bulk.
 
 The actual proof requires several technical steps such as (i) localising
 the dynamics  by considering a short range approximation and treating the 
 long range part as a perturbation; (ii) finite speed of propagation for the short range dynamics;
 (iii) cutoff the initial data in a smooth way so that cutoff and time evolution almost commute.
Since these steps have appeared in the literature  earlier, we will not reprove them here,
  we just refer to~\cite{2005.08425} where they have been adapted to the eigenvector moment flow.
 We will give full details only for Step 1.    

Parallel with but independently of the current work, Benigni and Lopatto~\cite{2103.12013} have proved the CLT for $\braket{{\bm u}_i,A{\bm u}_i}$ for the observable $A$ projecting onto a deterministic set of orthonormal vectors $A=\sum_{\alpha\in I} |q_\alpha\rangle\langle q_\alpha|$, with $N^\epsilon\le |I|\le N^{1-\epsilon}$, for some small fixed $\epsilon>0$. Their low rank assumption is complementary to our condition $\braket{\mathring{A}^2}\ge c$ for this class of projection operators, moreover their result also covered the edge regime. The low rank assumption allowed them to operate with the eigenvector moment flow from \cite{MR4242625, MR4156609}. However, their control can handle overlaps with at most $N^{1-\epsilon}$ vectors $q_\alpha$ simultaneously. It seems that this approach has a natural limitation preventing it from using it for high rank observables, e.g. for $|I|\sim N$. In contrast, we consider overlaps $\braket{{\bm u}_i,A{\bm u}_j}$ directly without relying on the spectral decomposition of $A$.

\subsection*{Notation and conventions}
We introduce some notations we use throughout the paper. For integers \(k\in\N \) we use the notation \([k]:= \set{1,\ldots, k}\). For positive quantities \(f,g\) we write \(f\lesssim g\) and \(f\sim g\) if \(f \le C g\) or \(c g\le f\le Cg\), respectively, for some constants \(c,C>0\) which depend only on the constants appearing in~\eqref{eq:momentass}. We denote vectors by bold-faced lower case Roman letters \({\bm x}, {\bm y}\in\C ^N\), for some \(N\in\N\). Vector and matrix norms, \(\norm{\vx}\) and \(\norm{A}\), indicate the usual Euclidean norm and the corresponding induced matrix norm. For any \(N\times N\) matrix \(A\) we use the notation \(\braket{ A}:= N^{-1}\Tr  A\) to denote the normalized trace of \(A\). Moreover, for vectors \({\bm x}, {\bm y}\in\C^N\) we define
\[
\braket{ {\bm x},{\bm y}}:= \sum \overline{x}_i y_i.
\]
We will use the concept of ``with very high probability'' meaning that for any fixed \(D>0\) the probability of the \(N\)-dependent event is bigger than \(1-N^{-D}\) if \(N\ge N_0(D)\). Moreover, we use the convention that \(\xi>0\) denotes an arbitrary small 
positive constant which is independent of \(N\).

\subsection*{Acknowledgement} L.E. would like to thank Zhigang Bao for many illuminating discussions in an 
early stage of this research. The authors are also grateful to Paul Bourgade for his comments on the manuscript and the anonymous referee for several useful suggestions.

\section{Main results}

Let \(W\) be an \(N\times N\) real symmetric or complex Hermitian Wigner matrix. We formulate the following assumptions on \(W\).

\begin{assumption}\label{ass:entr}
  We assume that the matrix elements \(w_{ab}\) are independent up to the Hermitian symmetry \(w_{ab}=\overline{w_{ba}}\) and identically distributed in the sense that \(w_{ab}\stackrel{\mathrm{d}}{=} N^{-1/2}\chi_{\mathrm{od}}\), for \(a<b\), \(w_{aa}\stackrel{\mathrm{d}}{=}N^{-1/2} \chi_{\mathrm{d}}\), with \(\chi_{\mathrm{od}}\) being a real or complex random variable and \(\chi_{\mathrm{d}}\) being a real random variable such that \(\E \chi_{\mathrm{od}}=\E \chi_{\mathrm{d}}=0\) and \(\E |\chi_{\mathrm{od}}|^2=1\). In the complex case we also assume that \(\E \chi_{\mathrm{od}}^2=0\). In addition, we assume the existence of the high moments of \(\chi_{\mathrm{od}}\), \(\chi_{\mathrm{d}}\), i.e.\ that there exist constants \(C_p>0\), for any \(p\in\N \), such that
  \begin{equation}
  \label{eq:momentass}
    \E \abs*{\chi_{\mathrm{d}}}^p+\E \abs*{\chi_{\mathrm{od}}}^p\le C_p.
  \end{equation}
\end{assumption}

 Let \(\lambda_1\le \lambda_2\le \ldots \le \lambda_N\) be its eigenvalues in 
increasing order and 
 denote by \({\bm u}_1,\dots, {\bm u}_N\) the corresponding orthonormal eigenvectors.  
 For any \(N\times N\) matrix \(A\) we denote by \(\mathring{A}:= A-\braket{A}\)  the traceless part  of \(A\). We now state our main result.

\begin{theorem}[Central Limit Theorem in the QUE]\label{theo:flucque} Let \(W\)  be a  real symmetric (\(\beta=1\)) or complex Hermitian (\(\beta=2\)) 
Wigner matrix satisfying Assumptions~\eqref{ass:entr}.
Fix small \(\delta,\delta'>0\) and let \(A=A^*\)  be a deterministic \(N\times N\) matrix with \(\norm{A}\lesssim 1\)
and \(\braket{\mathring{A}^2}\ge \delta'\). In the real symmetric case we also assume that \(A\in\mathbf{R}^{N\times N}\)
is real.  Then for any \(i\in [\delta N, (1-\delta) N]\) it holds
\begin{equation}
\sqrt{\frac{\beta N}{2\braket{\mathring{A}^2}}} \big[\braket{{\bm u}_i,A {\bm u_i}}-\braket{A}\big]\Rightarrow \mathcal{N},\qquad \mbox{as 
\;\; \(N\to\infty\)}
\end{equation}
in the sense of moments, with \(\mathcal{N}\) being a standard real Gaussian random variable. The speed of convergence is explicit, see~\eqref{eq:cltgcomp}.
\end{theorem}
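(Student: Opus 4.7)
The plan is to follow the three-step DBM + GFT strategy outlined in the introduction. I would first set up the Ornstein--Uhlenbeck Dyson Brownian motion for \(W_t\), under which the eigenvectors evolve via the stochastic eigenstate equation, and for each fixed order \(n\) introduce the perfect matching observable \(f_t\) on the configuration space \(\Lambda^n\subset [N]^{2n}\), after~\cite{MR4156609}. This \(f_t\) encodes the conditional moments \(\E\bigl[\prod_{r=1}^n \braket{\bu_{i_r}, A\bu_{j_r}}\bigm|{\bm\lambda}\bigr]\) and satisfies a closed parabolic equation \(\partial_t f_t = \mathcal{B}_t f_t\) with known generator. The CLT of Theorem~\ref{theo:flucque} will follow once I show that, for a bulk index \(i\) and some time \(t\gg N^{-1+\epsilon}\), \(f_t\) is close to a deterministic profile \(g_t\) encoding the \(n\)th Gaussian moment with variance \(2\braket{\mathring A^2}/\beta\); the theorem then passes by taking the diagonal limit \(i_r = j_r = i\).

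Step~1, the energy estimate, is the new technical core. Setting \(h_t := f_t - g_t\), the standard identity \(\tfrac12\partial_t \|h_t\|_2^2 = -D_t(h_t)\) reduces the task to a lower bound on the Dirichlet form \(D_t\) of \(\mathcal{B}_t\). I would bound \(D_t\) below by a weaker auxiliary form \(\widetilde D_t\) whose kernel averages in \emph{all} \(n\) coordinate directions of \(\Lambda^n\) simultaneously, a reduction resting on a discrete analogue of the operator inequality~\eqref{replace} with short-range regularisation at scale \(\eta = N^{-1+\epsilon}\). The reason this trade-off is profitable is that, via the \(n\)-fold extension of~\eqref{2av}, the form \(\widetilde D_t\) evaluated on overlaps produces normalised traces \(\braket{A\,\Im G(z_1)\,A\cdots A\,\Im G(z_n)}\), exactly the quantities controlled by the multi-resolvent local laws of~\cite[Prop.~3.4]{2012.13215} and their \(n\)-point generalisations that I would prove in Section~\ref{sec:llaw}. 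Substituting these laws yields \(D_t(h) \gtrsim c\|h\|_2^2 - \text{error}\) with a small deterministic error, and a Gronwall argument gives effective \(L^2\) decay of \(h_t\). The hardest subtasks here will be (i) formulating and proving the discrete form of~\eqref{replace} in the actual particle model, and (ii) establishing the multi-\(G\) local laws in Section~\ref{sec:llaw} with sharp deterministic leading terms in a form usable by \(\widetilde D_t\).

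For Step~2, I would upgrade the \(L^2\) smallness of \(h_t\) to pointwise bounds on individual moments by invoking the \(L^2\to L^\infty\) ultracontractivity estimate for the colourblind eigenvector moment flow from~\cite[Prop.~6.29]{2005.08425}. This delivers the CLT for Wigner matrices with a tiny Gaussian component added along the DBM. For Step~3 I would run a standard moment-matching Green function comparison to remove the added Gaussian component and reach arbitrary Wigner matrices satisfying Assumption~\ref{ass:entr}, using the a~priori eigenstate thermalisation bound~\eqref{eq:eth} of~\cite{2012.13215} to control the terms in the telescoping. The remaining auxiliary ingredients in Step~1---short-range approximation of the flow, finite speed of propagation for the short-range dynamics, and smooth cutoffs of the initial data that almost commute with the evolution---are by-now-standard adaptations of the corresponding arguments in~\cite{2005.08425}, which I would invoke rather than reprove.
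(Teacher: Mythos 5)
Your proposal is correct and follows essentially the same route as the paper: the perfect matching observable and its parabolic flow, the energy method in which the Dirichlet form is compared to an auxiliary form averaging in all coordinate directions (the discrete analogue of~\eqref{replace}) so that the multi-\(G\) local laws of Section~\ref{sec:llaw} control the error, followed by the \(L^2\to L^\infty\) ultracontractivity bound of~\cite{2005.08425} and a Green function comparison using the a priori bound~\eqref{eq:eth}. The auxiliary steps you defer to~\cite{2005.08425} (short-range approximation, finite speed of propagation, commuting cutoffs) are exactly the ones the paper also imports rather than reproves.
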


\section{Perfect Matching observables}
For definiteness, we present the
 proof for  the real symmetric case, the analysis for the  complex 
Hermitian case it is completely analogous and so omitted. We only mention that the main difference
between the two symmetry classes is that the perfect matching observables \(f_{\bm\lambda,t}\)
in~\eqref{eq:deff} are defined slightly differently (see~\cite[Eq. (A.3)]{MR4156609}) but the current proof can be easily adapted
to this case.

Consider the matrix flow
\begin{equation}\label{eq:matdbm}
\dif W_t=\frac{\dif \widetilde{B}_t}{\sqrt{N}}, \qquad W_0=W,
\end{equation}
with \(\widetilde{B}_t\) being a standard real symmetric Brownian motion (see e.g.~\cite[Definition 2.1]{MR3606475}). We denote the resolvent of \(W_t\) by \(G=G_t(z):=(W_t-z)^{-1}\), for \(z\in\mathbf{C}\setminus\mathbf{R}\). It is well known (see e.g.~\cite{MR2871147,MR3103909,MR3183577}) that as \(N\to \infty\) the resolvent \((W-z)^{-1}\) becomes approximately deterministic; its deterministic approximation is given by the unique solution of the scalar quadratic equation
\begin{equation}
-\frac{1}{m(z)}=z+m(z), \qquad \Im m(z)\Im z>0.
\end{equation}
In particular, \(m(z)=m_{\mathrm{sc}}(z)\), \(m_{\mathrm{sc}}(z)\) being the Stieltjes transform of the semicircular law \(\rho_{\mathrm{sc}}(x):=(2\pi)^{-1}\sqrt{(4-x^2)_+}\). The deterministic approximation of \(G_t(z)\) is given by \(m_t(z)\), with \(m_t\) the solution of 
\begin{equation}
\partial_t m_t(z)=-m_t\partial_z m_t(z), \qquad m_0=m.
\end{equation}
From now on by \(\rho_t=\rho_t(z)\) we denote \(\rho_t(z):=\pi^{-1}\Im m_t (z)\), for any \(t\ge 0\). In fact, starting from  the  standard semicircle \(\rho_0=\rho_{\mathrm{sc}}\), the density \(\rho_t(x+\ii 0)\) is just a rescaling of \(\rho_0\) by a factor \cred{\(1+t\)}.

By~\cite[Definition 2.2]{MR3606475} it follows that the eigenvectors \({\bm u}_1(t),\dots, {\bm u}_N(t)\) of \(W_t\), corresponding to the eigenvalues \(\lambda_1(t)\le \lambda_2(t)\le \dots\le \lambda_N(t)\), are a solution of the following system of SDE (dropping the time dependence):
\begin{align}\label{eq:evaluflow}
\dif \lambda_i&=\frac{\dif B_{ii}}{\sqrt{N}}+\frac{1}{N}\sum_{j\ne i} \frac{1}{\lambda_i-\lambda_j} \dif t \\\label{eq:evectorflow}
\dif {\bm u}_k&=\frac{1}{\sqrt{N}}\sum_{j\ne i} \frac{\dif B_{ij}}{\lambda_i-\lambda_j}{\bm u}_j-\frac{1}{2N}\sum_{j\ne i} \frac{{\bm u}_i}{(\lambda_i-\lambda_j)^2}\dif t,
\end{align}
with \(\{B_{ij}\}_{i,j\in [N]}\) being a standard real symmetric Brownian motions. See~\cite[Theorem 2.3]{MR3606475} for the existence and uniqueness of the strong solution of~\eqref{eq:evaluflow}--\eqref{eq:evectorflow}.

By~\eqref{eq:evectorflow} it follows that the flow for the diagonal overlaps \(\braket{{\bm u}_i, A {\bm u}_i}\) naturally depends also on the off-diagonal overlap \(\braket{{\bm u}_i, A {\bm u}_j}\), hence our analysis will concern not only diagonal overlaps, but also off-diagonal ones. 
Since \(\braket{{\bm u}_i, A {\bm u}_i}-\braket{A}=\braket{{\bm u}_i, \mathring{A} {\bm u}_i}\) and
\(\braket{{\bm u}_i, A {\bm u}_j}=  \braket{{\bm u}_i, \mathring{A} {\bm u}_j}\) for \(i\ne j\), without loss of generality we
may assume for the rest of the paper, that \(A\) is traceless, \(\braket{A}=0\), i.e.\ \(A=\mathring{A}\). For traceless \(A\)
we introduce the short-hand notation
\begin{equation}
 p_{ij}=p_{ij}(t):=\braket{{\bm u}_i(t), A {\bm u}_j(t)},
\quad i, j\in [N]. 
\end{equation}

We are now ready to write the flow for monomials of  \(p_{ii}\), \(p_{ij}\) (see~\cite[Theorem 2.6]{MR4156609} for the derivation of the flow). For any fixed \(n\), we 
will only need to consider monomials  of the form \(\prod_{k=1}^n p_{i_k j_k}\) where each index appears even number of times;
it turns out that the linear combinations of such monomials with a fixed degree \(n\) are invariant under the flow.

To encode general monomials, we use a particle picture (introduced in~\cite{MR3606475} and developed in~\cite{MR4156609, 2005.08425}) where each particle on 
the set of integers \([N]\) corresponds to two occurrences of an index \(i\) in the monomial product.
We use the same notation as in~\cite{MR4156609} and  we define \({\bm \eta}:[N] \to \mathbf{N}\), where \(\eta_j:={\bm \eta}(j)\) is interpreted as the number of particles at the site \(j\), and \(n({\bm \eta}):=\sum_j \eta_j= n\) denotes the total number of particles that is conserved under the flow. The space of \(n\)-particle configurations is denoted by \(\Omega^n\).
Moreover, for any index pair \(i\ne j\in[N]\), we define \({\bm \eta}^{ij}\) to be the configuration obtained moving a particle from the site \(i\) to the site \(j\), if there is no particle in \(i\) then we define \({\bm \eta}^{ij}={\bm \eta}\). For any configuration \({\bm \eta}\) consider the set of vertices
\begin{equation}
\mathcal{V}_{{\bm \eta}}:=\{(i,a): 1\le i \le n, 1\le a\le 2\eta_i\},
\end{equation}
and let \(\mathcal{G}_{\bm \eta}\) be the set of perfect matchings on \(\mathcal{V}_{{\bm \eta}}\). 
Note that every particle configuration \(\eta\)  gives rise to two vertices in \(\mathcal{V}_{{\bm \eta}}\), thus the elements of 
\(\mathcal{V}_{{\bm \eta}}\) represent the indices in the product \(\prod_{k=1}^n p_{i_k j_k}\).

There is no closed equation for individual products \(\prod_{k=1}^n p_{i_k j_k}\), but  there is one for a certain symmetrized
linear combination, see~\cite[Eq. (2.15)]{MR4156609}. Therefore, for any perfect matching \(G\in \mathcal{G}_{\bm \eta}\) we
 define
\begin{equation}\label{eq:defpg}
P(G):=\prod_{e\in \mathcal{E}(G)}p(e), \qquad p(e):=p_{i_1i_2},
\end{equation}
where \(e=\{(i_1,a_1),(i_2,a_2)\}\in \mathcal{V}_{\bm \eta}\), and \(\mathcal{E}(G)\) denotes the edges of \(G\). 
For example, for \(n=2\) and  for the configuration 
\({\bm \eta}\) defined by \({\bm\eta}(i)=   {\bm\eta}(j)= 1\) with some \(i\ne j\) and zero otherwise, we have three perfect matchings
corresponding to \(p_{ii} p_{jj}\) and twice \(p_{ij}^2\).
For \(n=3\) and \({\bm \eta}\) defined by \({\bm\eta}(i)=   {\bm\eta}(j) = {\bm\eta}(k) =1\), we have 15 perfect matchings;
\(p_{ii} p_{jj} p_{kk}\), two copies \(p_{ij}^2p_{kk}\), \(p_{ik}^2 p_{jj}\), \(p_{jk}^2 p_{ii}\) each and 8 copies of \(p_{ij}p_{jk}p_{ki}\).

We are now  ready to define the \emph{perfect matching observable} for any given configuration \({\bm \eta}\), 
\begin{equation}
  \label{eq:deff}
  f_{{\bm \lambda},t}({\bm \eta}):= \frac{N^{n/2}}{ [2\braket{{A}^2}]^{n/2}} \frac{1}{(n-1)!!}\frac{1}{\mathcal{M}({\bm \eta}) }\E\left[\sum_{G\in\mathcal{G}_{\bm \eta}} P(G)\Bigg| 
  {\bm \lambda}\right], \quad \mathcal{M}({\bm \eta}):=%
  \prod_{i=1}^N (2\eta_i-1)!!,
  \end{equation}
with \(n\) being the number of particles in the configuration \({\bm \eta}\). Here we took the conditioning on the entire flow of eigenvalues, \({\bm \lambda} =\{\bm \lambda(t)\}_{t\in [0,T]}\) for some  fixed  \(T>0\). From now on we will always assume that $T\ll 1$ (even if not stated explicitly). The observable \(f_{\bm\lambda,t}\) satisfies a parabolic partial differential equation, see~\eqref{eq:1dequa} below.
\begin{remark}
For any \(k\in \mathbf{N}\) the double factorial \(k!!\) is defined by \(k!!=k(k-2)!!\), \(1!!=0!!=(-1)!!=1\). We remark that in~\cite{MR4156609, 2005.08425} the authors use a different convention for the double factorial, i.e.\ in these papers \(k!!=(k-1)(k-2)!!\).
\end{remark}

Note that \(f\) in~\eqref{eq:deff} is  defined slightly differently
 compared to the definition in~\cite[Eq. (2.15)]{MR4156609}, where the authors do not have the additional 
 \((N/(2\braket{{A}^2})^{n/2}[(n-1)!!]^{-1}\) factor. Our normalisation factor is dictated by
 the principle that for traceless \(A\) we expect \(\sqrt{N}p_{ii}=\sqrt{N}[\langle {\bm u}_i, A{\bm u}_i\rangle ]\) to be approximately
 a centred normal random variable with variance \(2\braket{{A}^2}\). In particular the \(n\)-th moment of
 \((N/ 2\braket{{A}^2})^{1/2}p_{ii}\) for even \(n\) is close to \((n-1)!!\). 
  Therefore if \({\bm\eta}\) is a configuration with \(n\)
 particles all sitting at the same site \(i\),  i.e.\ \({\bm \eta}(i)=n\) and zero otherwise, then 
 \(\mathcal{M}({\bm \eta})= (2n-1)!!\) is the number of perfect matchings and therefore
 we expect \(f_{{\bm \lambda},t}({\bm \eta})  \approx 1\).
 Note that using the a priori bound \(|p_{ij}|\le N^{-1/2+\xi}\), for any \(\xi>0\), proven in~\cite[Theorem 2.2]{2012.13215}
    we have \(|f_{{\bm \lambda},t}|\lesssim N^\xi\) with very high  probability,  while the analogous 
    quantity \(f_{{\bm \lambda},t}\) defined  in~\cite[Eq. (2.15)]{MR4156609} has  an a priori bound  of order \(N^{-n/2+\xi}\).

We always assume that the entire eigenvalue trajectory 
\(\{\bm \lambda(t)\}_{t\in [0,T]}\) satisfies the usual rigidity estimate (see e.g.~\cite[Theorem 7.6]{MR3068390} or~\cite{MR2871147}). More precisely,  for any fixed \(\xi>0\) we define
\begin{equation}\label{def:Omega}
  \Omega=\Omega_\xi:= \Big\{ \sup_{0\le t \le T} \max_{i\in[N]} N^{2/3}\widehat{i}^{1/3} | \lambda_i(t)-\gamma_i(t)| \le N^\xi\Big\}
\end{equation}
where \(\widehat{i}:=i\wedge (N+1-i)\),
then we have
\[
  \mathbf{P} (\Omega_\xi)\ge 1-  C(\xi, D) N^{-D}
\]
for any (small) \(\xi>0\) and (large) \(D>0\). 
 Here \(\gamma_i(t)\) are the classical eigenvalue locations (\emph{quantiles}) defined by
\begin{equation}\label{eq:quantin}
  \int_{-\infty}^{\gamma_i(t)} \rho_t(x)\, \dif x=\frac{i}{N}, \qquad i\in [N],
\end{equation}
where \(\rho_t(x)= \frac{1}{2(1+t)\pi}\sqrt{(4(1+t)^2-x^2)_+}\) is the semicircle law corresponding to \(W_t\).
 Note that \(|\gamma_i(t)-\gamma_i(s)|\lesssim |t-s|\) in the bulk, for any \(t,s\ge 0\), as a consequence of the smoothness of \(t\to\rho_t\) in the bulk.

By~\cite[Theorem 2.6]{MR4156609} we have that
\begin{align}\label{eq:1dequa}
\partial_t f_{{\bm \lambda},t}&=\mathcal{B}(t)f_{{\bm \lambda},t}, \\\label{eq:1dkernel}
\mathcal{B}(t)f_{{\bm \lambda},t}&=\sum_{i\ne j} c_{ij}(t) 2\eta_i(1+2\eta_j)\big(f_{{\bm \lambda},t}({\bm \eta}^{kl})-f_{{\bm \lambda},t}({\bm \eta})\big).
\end{align}
where 
\begin{equation}\label{eq:defc}
c_{ij}(t):= \frac{1}{N(\lambda_i(t) -  \lambda_j(t))^2}.
\end{equation}
Note that \(c_{ij}\) depends on \(\{{\bm \lambda}(t)\}_{t\in [0, T]}\), for some \(T>0\), but we omit this fact from the notation.
We note that this flow was originally derived  for special observables given in~\cite[Eq. (2.6)]{MR4156609},  but the  same derivation immediately holds 
for arbitrary \(A\) (see~\cite[Remark 2.8]{MR4156609}).

The main technical ingredient that will be used in the proof of Theorem~\ref{theo:flucque} 
 is the following proposition, whose proof is postponed to Section~\ref{sec:DBM}.

\begin{proposition}\label{pro:flucque}
For any \(n\in\mathbf{N}\) there exists \(c(n)>0\) such that for any \(\epsilon>0\), and for any \(T\ge N^{-1+\epsilon}\) it holds
\begin{equation}\label{eq:mainbthissec}
\sup_{{\bm \eta}}\big|f_T({\bm\eta})-\bm1(n\,\, \mathrm{even})\big|\lesssim N^{-c(n)},
\end{equation}
with very high probability, where the supremum is taken over configurations \({\bm \eta}\) such that \(\sum_i \eta_i=n\) and \(\eta_i=0\) for \(i\notin [\delta N, (1-\delta) N]\), with \(\delta>0\) from Theorem~\ref{theo:flucque}. The implicit constant in~\eqref{eq:mainbthissec} depends on \(n\), \(\epsilon\), \(\delta\).
\end{proposition}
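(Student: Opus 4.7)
My plan is to follow the three-step outline announced in Section 1, using Steps 1 and 2 to establish Proposition~\ref{pro:flucque} (Step 3 is the Green function comparison needed only to pass from the Wigner-plus-small-Gaussian ensemble back to the original Wigner matrix in Theorem~\ref{theo:flucque}). The assertion~\eqref{eq:mainbthissec} amounts to showing that, along the flow~\eqref{eq:1dequa}, the perfect matching observable becomes close to $c := \mathbf{1}(n\,\text{even})$ uniformly on bulk configurations by time $T \ge N^{-1+\epsilon}$. Since $\mathcal{B}(t)$ annihilates constants and preserves the $n$-particle sector, $g_t := f_{\bm\lambda,t} - c$ satisfies the same parabolic equation as $f_{\bm\lambda,t}$. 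I would first localize the dynamics by splitting $\mathcal{B}(t) = \mathcal{B}^{\mathrm{SR}}(t) + \mathcal{B}^{\mathrm{LR}}(t)$, keeping only jumps of range $\le \ell = N^{\alpha}$ for a small $\alpha$ in $\mathcal{B}^{\mathrm{SR}}$, treating $\mathcal{B}^{\mathrm{LR}}$ perturbatively via finite speed of propagation, and smoothly truncating $g_0$ to a neighbourhood of the target bulk configurations so that cutoff and short-range evolution approximately commute. These steps are standard adaptations of~\cite{2005.08425}, performed on the rigidity event $\Omega_\xi$ from~\eqref{def:Omega}.

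The core of the argument is the energy estimate for the short-range dynamics. Differentiating gives
\begin{equation*}
\tfrac12\partial_t\|g_t\|_2^2 = -D_t(g_t),
\end{equation*}
where $D_t$ is the Dirichlet form of $\mathcal{B}^{\mathrm{SR}}(t)$, morally a sum of regularised one-dimensional discrete $|p|$-operators along the $n$ coordinate directions of $\Omega^n$, with scale $\eta \sim \ell/N$. A direct Poincar\'e lower bound is not strong enough: the resulting one-index averages are of the form~\eqref{1av}, which still contain a free eigenvector. My approach would instead pass to a smaller Dirichlet form $\widetilde D_t \le D_t$ obtained by taking the \emph{product} of the one-dimensional averaging operators in all $n$ directions; the discrete analogue of the operator inequality~\eqref{replace} justifies the comparison. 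When evaluated on $g_t$, the form $\widetilde D_t(g_t)$ produces traces of the $n$-resolvent structure $\braket{\Im G(z_1) A \Im G(z_2) A \cdots \Im G(z_n) A}$ via the spectral-theorem identity generalising~\eqref{2av}. These quantities are sharply controlled by the multi-resolvent local laws proven in Section~\ref{sec:llaw}, and this bound is what gives the decisive gain over the naive isotropic-law estimate. Feeding the estimate $D_t(g_t) \ge \widetilde D_t(g_t) \gtrsim \eta^{-1}\|g_t\|_2^2 - \text{(multi-}G\text{ error)}$ into Gronwall yields $\|g_t\|_2 \lesssim N^{-c(n)}$ for $t \ge N^{-1+\epsilon}$.

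The last step promotes this $L^2$-decay to the sup bound~\eqref{eq:mainbthissec}: I would apply the $L^2 \to L^\infty$ ultracontractivity estimate for the colourblind eigenvector moment flow from~\cite[Proposition 6.29]{2005.08425}, run over a short additional interval, which costs only a small polynomial factor in $N$ and absorbs into $c(n)$. The restriction to bulk configurations $\eta_i = 0$ for $i \notin [\delta N,(1-\delta)N]$ ensures uniformity of the multi-resolvent local laws and of the quantile regularity used throughout. The main obstacle is the energy step itself: proving the discrete analogue of~\eqref{replace} for the time-dependent short-range kernel $c_{ij}(t)$, and then coupling it with a multi-$G$ local law that is sharp enough, in $n$ spectral parameters of imaginary part $\eta = N^{-1+\epsilon}$, for the product-gain to strictly dominate both the $N^\xi$ overheads and the long-range perturbation error. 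Orchestrating this interplay between Dyson Brownian motion technology and the recent multi-resolvent machinery is the genuinely new input of the proof.
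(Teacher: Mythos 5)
Your proposal follows essentially the same route as the paper's proof: short-range localization with finite speed of propagation and a smooth cutoff of the initial data, the energy step in which the sum-type Dirichlet form is replaced by the smaller product-type one (the discrete analogue of~\eqref{replace}, Lemma~\ref{lem:replacement}) so that the averages produce multi-resolvent traces controlled by the local laws of Section~\ref{sec:llaw}, and finally the colourblind $L^2\to L^\infty$ ultracontractivity of~\cite[Proposition 6.29]{2005.08425} together with a comparison back to the original flow. The only cosmetic difference is bookkeeping: the paper's $L^2$ bound at time $T_1$ carries the volume factor $K^{n/2}$ (i.e.\ $\norm{h_{T_1}}_2\lesssim K^{n/2}\mathcal{E}$) rather than being $N^{-c(n)}$ outright, and this factor is cancelled by the $[N(T_2-T_1)]^{-n/2}$ ultracontractivity gain on a second, longer time scale run with the regularized lattice generator $\mathcal{W}$.
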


\begin{proof}[Proof of Theorem~\ref{theo:flucque}]
We fix \(i\in [\delta N,(1-\delta) N]\) and we choose \({\bm \eta}\) to be the configuration \({\bm \eta}\) with \(\eta_i=n\) and all other \(\eta_j=0\). Then all the terms \(P(G)\) are equal to \(p_{ii}^n\) in the definition of \(f\), see~\eqref{eq:deff}. Then, using~\eqref{eq:mainbthissec} for this particular \({\bm \eta}\), we conclude that 
\begin{equation}\label{eq:cltgcomp}
\E\left[\sqrt{\frac{N}{2\braket{A^2}}}\braket{{\bm u}_i(T),A{\bm u}_i(T)}\right]^n=\bm1(n\,\, \mathrm{even})(n-1)!!+\mathcal{O}\left(N^{-c(n)}\right),
\end{equation}
for any \(i\in [\delta N,(1-\delta) N]\) and \(T\gg N^{-1}\), where we used that \(\norm{f_{T}}_\infty\le N^{n/2}\) deterministically on the complement of the high probability set on which~\eqref{eq:mainbthissec} holds. With~\eqref{eq:cltgcomp} we have proved that Theorem~\ref{theo:flucque} holds for Wigner matrices with a small Gaussian component. For the general case, Theorem~\ref{theo:flucque} follows from~\eqref{eq:cltgcomp} and a standard application of the Green function comparison theorem (GFT), relating the eigenvectors/eigenvalues of \(W_T\) to those of \(W\); see Appendix~\ref{app:GFT} where we recall the argument for completeness.
\end{proof}

\section{DBM analysis}\label{sec:DBM}
In this section we focus on the analysis of the eigenvector moment flow~\eqref{eq:1dequa}--\eqref{eq:1dkernel}. Since in our proof we use some results proven in~\cite{2005.08425}, we start giving an equivalent representation of~\eqref{eq:deff} which is the same used in~\cite{2005.08425} without distinguishing the several colours.

\subsection{Equivalent representation of the flow}\label{sec:equivrep}
Fix \(n\in\mathbf{N}\), then in the remainder of this section we will consider configurations \({\bm \eta}\in\Omega^n\), i.e.\ such that \(\sum_j \eta_j=n\). Following~\cite{2005.08425} (but without the extra complication involving  colours), 
we now give an equivalent representation of the flow~\eqref{eq:1dequa}--\eqref{eq:1dkernel}
which will be defined on the \(2n\)-dimensional lattice \([N]^{2n}\) instead of configurations 
of \(n\) particles. Let \({\bm x}\in [N]^{2n}\) and define
\begin{equation}
n_i({\bm x}):=|\{a\in [2n]:x_a=i\}|,
\end{equation}
for all \(i\in \mathbf{N}\). We define the configuration space
\[ 
  \Lambda^n:= \{ {\bm x}\in [N]^{2n} \, : \,\mbox{\(n_i({\bm x})\) is even for every \(i\in [N]\)} \big\}.%
\]
Note that \(\Lambda^n\) is an \(n\)-dimensional subset of the \(2n\) dimensional
lattice \([N]^{2n}\) in the sense that \(\Lambda^n\) is a finite union of \(n\)-dimensional sublattices of \([N]^{2n}\).
From now on we will only consider configurations \({\bm x}\in\Lambda^n\). In particular, in this representation to each particle is associated a label $a\in [2n]$, i.e.\ there is a particle at a site $i\in [N]$ iff there exists $a\in [2n]$ such that $x_a=i$. Additionally, by the definition of $\Lambda^n$ it follows that the number of particles at a site $i\in [N]$ is always even.

\begin{remark}
Note that in~\cite{2005.08425} the authors consider \({\bm x}\) to be an \(n\)-dimensional vector that lives in the \(n/2\)-dimensional subset \(\Lambda^n\). For notational simplicity, in the current paper we assume that \({\bm x}\) is a \(2n\)-dimensional vector and that \(\Lambda^n\) is \(n\)-dimensional.
\end{remark}

The natural correspondence between the two representations is given by 
 \begin{equation}\label{xeta}
   {\bm \eta} \leftrightarrow {\bm x}\qquad \eta_i=\frac{n_i( {\bm x})}{2}.
\end{equation}
Note that \({\bm x}\) uniquely determines \({\bm \eta}\), but \({\bm \eta}\) determines only the coordinates
of \({\bm x}\) as a multi-set and not its ordering. As an example, the configuration with single (or doubled) particles in \(i_1\ne i_2\) corresponds to six \(\bm x\in\Lambda^2\) as in
\begin{align*}
  \underbrace{\begin{tikzpicture}
    [every node/.style={fill, circle, inner sep = 1.5pt},baseline=0pt]
  \draw (-.5,0) -- (1.5,0);
  \node[label=below:$i_1$] (i1) at (0,0) {};
  \node[label=below:$i_2$] (i2) at (1,0) {};
  \end{tikzpicture}}_{\bm\eta\text{-repr.}}\quad \Leftrightarrow \quad 
  \underbrace{\begin{tikzpicture}
    [every node/.style={fill, circle, inner sep = 1.5pt},baseline=0pt]
  \draw (-.5,0) -- (1.5,0);
  \node[label=below:$i_1$] (i1) at (0,0) {};
  \node[label=below:$i_2$] (i2) at (1,0) {};
  \node (i11) at ($(i1)+(0,.33)$) {};
  \node (i21) at ($(i2)+(0,.33)$) {};
  \end{tikzpicture}}_{\text{doubled }\bm\eta\text{-repr.}}\quad \Leftrightarrow \quad 
  \underbrace{\begin{pmatrix}i_1\\ i_1\\ i_2\\ i_2\end{pmatrix}\equiv
  \begin{pmatrix}i_1\\ i_2\\ i_1\\ i_2\end{pmatrix}\equiv
  \begin{pmatrix}i_1\\ i_2\\ i_2\\ i_1\end{pmatrix}\equiv\cdots}_{\bm x\text{-repr.}}.
\end{align*}
Let \(\phi\colon\Lambda^n\to \Omega^n\), \(\phi({\bm x})={\bm \eta}\) 
denote the map that projects the \({\bm x}\)-configuration space to the \({\bm \eta}\)-configuration space using~\eqref{xeta}.
This map naturally pulls back functions \(f\) of \({\bm \eta}\)  to functions of \({\bm x}\)
\[
     (\phi^* f)({\bm x}) = f(\phi({\bm x})).
\]
 We will always consider
functions \(g\) on \([N]^{2n}\) that are push-forwards of some function \(f\) on \(\Omega^n\), 
\(g= f\circ \phi\), i.e.
they correspond to functions on the configurations
\[
   f({\bm \eta})= f(\phi({\bm x}))= g({\bm x}).
\]
In particular \(g\) is supported on \(\Lambda^n\) and it is equivariant under permutation of the 
arguments, i.e.\ it
 depends on \({\bm x}\) only as a multiset. %
We  therefore consider the observable
\begin{equation}\label{eq:defg}
g_{{\bm \lambda},t}({\bm x}):= f_{{\bm \lambda},t}( \phi({\bm x}))
\end{equation}
where \( f_{{\bm \lambda},t}\) was defined in~\eqref{eq:deff}.
 In the following we will often use the notation \(g_t({\bm x})=g_{{\bm \lambda},t}({\bm x})\), dropping the dependence of \(g_t({\bm x})\) on the eigenvalues.

The flow~\eqref{eq:1dequa}--\eqref{eq:1dkernel} can be written in the \({\bm x}\)-representation as follows:
\begin{align}\label{eq:g1deq}
\partial_t g_t({\bm x})&=\mathcal{L}(t)g_t({\bm x}) \\\label{eq:g1dker}
\mathcal{L}(t):=\sum_{j\ne i}\mathcal{L}_{ij}(t), \quad \mathcal{L}_{ij}(t)g({\bm x}):&= c_{ij}(t) \frac{n_j({\bm x})+1}{n_i({\bm x})-1}\sum_{a\ne b\in[2 n]}\big(g({\bm x}_{ab}^{ij})-g({\bm x})\big),
\end{align}
where
\begin{equation}
\label{eq:jumpop}
{\bm x}_{ab}^{ij}:={\bm x}+\delta_{x_a i}\delta_{x_b i} (j-i) ({\bm e}_a+{\bm e}_b),
\end{equation} 
with \({\bm e}_a(c)=\delta_{ac}\), \(a,c\in [2n]\). Clearly this flow preserves the equivariance of \(g\),
i.e.\ it is a map on functions defined on \(\Lambda^n\). The jump operator ${\bm x}_{ab}^{ij}$ defined in~\eqref{eq:jumpop} changes \(x_a,x_b\) from \(i\) to \(j\) if \(x_a=x_b=i\) and otherwise leaves \(\bm x\) unchanged. In the particle picture ${\bm \eta}$ this corresponds in moving one particle from the site $i$ (if there is any) to the site $j$, see the following example for \(n=2\) (with \(i=i_1,j=j_1\) and \(a=1,b=2\)): 
\[ \begin{tikzpicture}
  [every node/.style={fill, circle, inner sep = 1.5pt},baseline=-15pt,>=stealth']
\draw (-1.5,0) -- (1.5,0);
\draw (-1.5,-1) -- (1.5,-1);
\node[label=below:$i_1$] (i1) at (0,0) {};
\node[label=below:$i_2$] (i2) at (1,0) {};
\node[label=below:$j_1$] (j1) at (-1,-1) {};
\node[label=below:$i_2$] (j2) at (1,-1) {};
\draw [shorten >=1pt,shorten <=1pt,->] (i1) -- (j1);
\end{tikzpicture}\quad \Leftrightarrow \quad 
\begin{tikzpicture}
  [every node/.style={fill, circle, inner sep = 1.5pt},baseline=-15pt,>=stealth']
\draw (-1.5,0) -- (1.5,0);
\draw (-1.5,-1) -- (1.5,-1);
\node[label=below:$i_1$] (i1) at (0,0) {};
\node[label=below:$i_2$] (i2) at (1,0) {};
\node[label=below:$j_1$] (j1) at (-1,-1) {};
\node[label=below:$i_2$] (j2) at (1,-1) {};
\node (i1p) at ($(i1)+(0,.33)$) {};
\node (i2p) at ($(i2)+(0,.33)$) {};
\node (j1p) at ($(j1)+(0,.33)$) {};
\node (j2p) at ($(j2)+(0,.33)$) {};
\draw[thick,gray,rounded corners]  ($(i1p.north west)+(-0.1,0.1)$) rectangle ($(i1.south east)+(0.1,-0.1)$);
\draw[thick,gray,rounded corners]  ($(j1p.north west)+(-0.1,0.1)$) rectangle ($(j1.south east)+(0.1,-0.1)$);
\draw [shorten >=8pt,shorten <=8pt,->] ($(i1)!0.5!(i1p)$) -- ($(j1)!0.5!(j1p)$);
\end{tikzpicture}\quad \Leftrightarrow\quad
\bm x=\begin{pmatrix}i_1\\ i_1\\ i_2\\ i_2\end{pmatrix}\mapsto\bm x_{ab}^{i_1j_1}=\begin{pmatrix}j_1\\ j_1\\ i_2\\ i_2\end{pmatrix}.
\]

Define the measure
\begin{equation}\label{eq:revmeasure}
\pi({\bm x}):=\prod_{i=1}^N ((n_i({\bm x})-1)!!)^2
\end{equation}
on \(\Lambda^n\) and the corresponding \(L^2(\Lambda^n)=L^2(\Lambda^n,\pi)\) space equipped with the scalar product
\begin{equation}\label{eq:scalpro}
\braket{f, g}_{\Lambda^n}=\braket{f, g}_{\Lambda^n, \pi}:=\sum_{{\bm x}\in \Lambda^n}\pi({\bm x})\bar f({\bm x})g({\bm x}).
\end{equation}
We will often drop the dependence on the measure \(\pi\) in the scalar product. We also define the following norm on \(L^p(\Lambda^n)\):
\begin{equation}
\norm{f}_p:=\left(\sum_{{\bm x}\in \Lambda^n}\pi({\bm x})|f({\bm x})|^p\right)^{1/p}.
\end{equation}
The  measure \(\pi({\bm x})\) clearly satisfies
\begin{equation}\label{eq:boundsrevmeasure}
1\le \pi({\bm x}) \le (2n-1)!!,
\end{equation}
uniformly in \({\bm x}\in\Lambda^n\). 
A direct calculation in~\cite[Appendix A.2]{2005.08425}
shows that the operator \(\mathcal{L}=\mathcal{L}(t)\)  is symmetric with respect to the measure \(\pi\)
and it is a negative operator on the space \(L^2(\Lambda^n)\) with Dirichlet form
\[
   D(g)=\braket{g, (-\mathcal{L}) g}_{\Lambda^n} = \frac{1}{2}  \sum_{{\bm x}\in \Lambda^n}\pi({\bm x})
   \sum_{i\ne j} c_{ij}(t) \frac{n_j({\bm x})+1}{n_i({\bm x})-1}
   \sum_{a\ne b\in[2 n]}\big|g({\bm x}_{ab}^{ij})-g({\bm x})\big|^2.
\]
We will often omit the time dependence of the generator \(\mathcal{L}(t)\). We denote by \(\mathcal{U}(s,t)\) 
 the semigroup associated to \(\mathcal{L}\) from~\eqref{eq:g1dker}, i.e.\ for any \(0\le s\le t\) it holds
\[
\partial_t\mathcal{U}(s,t)=\mathcal{L}(t)\mathcal{U}(s,t), \quad \mathcal{U}(s,s)=I.
\]

\subsection{Short-range approximation}

Before proceeding we introduce a localised version of~\eqref{eq:g1deq}--\eqref{eq:g1dker}. 
Choose 
an (\(N\)-dependent) parameter \(1\ll K\le \sqrt{N}\) 
 and define the \emph{averaging operator} as a simple multiplication operator by a ``smooth'' cut-off function:
\begin{equation}
\Av(K,{\bm y})h({\bm x}):=\Av({\bm x};K,{\bm y})h({\bm x}), \qquad \Av({\bm x}; K, {\bm y}):=\frac{1}{K}\sum_{j=K}^{2K-1} \bm1(\norm{{\bm x}-{\bm y}}_1<j),
\end{equation}
with \(\norm{{\bm x}-{\bm y}}_1:=\sum_{a=1}^{2n} |x_a-y_a|\). 
While it was denoted and called  averaging operator in~\cite{MR4156609, 2005.08425}, it is rather a \emph{localization}, i.e.\ a multiplication
by a ``smooth'' cutoff function \({\bm x}\to  \Av({\bm x}; K, {\bm y})\) which 
 is centered at \({\bm y}\) and has a soft  range  of size \(K\). The parameters \(K, {\bm y}\) are considered fixed
 and often omitted from the notation. In particular, throughout the paper we will assume that \({\bm y}\) is supported in the bulk, i.e. we will always assume that ${\bm y}\in \mathcal{J}$ (see the definition of $\mathcal{J}$ in \eqref{eq:defintJ} below).

Now we define a short range version of the dynamics~\eqref{eq:g1deq}. Fix an  integer \(\ell\) with \(1\ll\ell\ll K\)
and define the short range coefficients
\begin{equation}\label{eq:ccutoff}
c_{ij}^{\mathcal{S}}(t):=\begin{cases}
c_{ij}(t) &\mathrm{if}\,\, i,j\in \mathcal{J} \,\, \mathrm{and}\,\, |i-j|\le \ell \\
0 & \mathrm{otherwise},
\end{cases}
\end{equation}
where \(c_{ij}(t)\) is defined in~\eqref{eq:defc}. Here
\begin{equation}\label{eq:defintJ}
\mathcal{J}=\mathcal{J}_\delta:=\{ i\in [N]:\, \gamma_i(0)\in \mathcal{I}_\delta\}, \qquad \mathcal{I}_\delta:=(-2+\delta,2-\delta)
\end{equation}
with \(\delta>0\) from Theorem~\ref{theo:flucque}, so that \(\mathcal{I}_\delta\) lies entirely in the bulk spectrum.

We define \(h_t({\bm x})\) as the time evolution  of a localized initial data \(g_0\) 
by the short range dynamics: %
\begin{equation}\label{g-1}
\begin{split}
h_0({\bm x};\ell, K,{\bm y})=h_0({\bm x};K,{\bm y}):&=\Av({\bm x}; K,{\bm y})(g_0({\bm x})-\bm1(n \,\, \mathrm{even})), \\\partial_t h_t({\bm x}; \ell, K,{\bm y})&=\mathcal{S}(t) h_t({\bm x}; \ell, K,{\bm y}),
\end{split}
\end{equation}
where
\begin{equation}\label{g-2}
\mathcal{S}(t):=\sum_{j\ne i}\mathcal{S}_{ij}(t), \quad \mathcal{S}_{ij}(t)h({\bm x}):=c_{ij}^{\mathcal{S}}(t)\frac{n_j({\bm x})+1}{n_i({\bm x})-1}\sum_{a\ne b\in [2n]}\big(h({\bm x}_{ab}^{ij})-h({\bm x})\big).
\end{equation}
Here we used the notation \(h({\bm x})=h({\bm x}; \ell, K,{\bm y})\) to indicate all relevant parameters:
  \(\ell\) indicates
the  short range  of the dynamics, \({\bm y}\) is the centre 
and \(K\) is the range of the cut-off  in the initial condition, and we always choose \(\ell \ll K\).
In~\eqref{g-1} we already subtracted \(\bm1(n \,\, \mathrm{even})\) since in our application the initial condition \(g_0({\bm x})\)
after some local averaging will be close to \(\bm1(n \,\, \mathrm{even})\), hence, after longer time we expect that \(h_t\) tends to zero since
the dynamics has a smoothing effect and it is an \(L^1\) contraction.

\subsection{\texorpdfstring{\(L^2\)}{L2}-bound}\label{sec:l2}
Define the distance on \(\Lambda^n\) as
\begin{equation}
d({\bm x}, {\bm y}):=\sup_{a\in [2n]}|\mathcal{J}\cap \big[\min(x_a,y_a), \max(x_a,y_a)\big)|,
\end{equation}
with \(\mathcal{J}\) defined in~\eqref{eq:defintJ}. Note that \(d\) is not a metric since it is degenerate, but it still symmetric and satisfies the triangle inequality~\cite[Eq.~(5.6)]{2005.08425}. The key ingredient to prove the \(L^2\)-bound in~\eqref{eq:l2b} below is to show that the short range dynamics~\eqref{g-1}--\eqref{g-2} is close to the original dynamics~\eqref{eq:g1deq}--\eqref{eq:g1dker}. This will be achieved using the following finite speed of propagation estimate, proven in~\cite[Theorem 2.1, Lemma 2.4]{MR3690289},~\cite[Proposition 5.2]{2005.08425} (see also~\cite[Eq. (3.15)]{MR4156609}), for \(\mathcal{U}_{\mathcal{S}}(s,t)=\mathcal{U}_{\mathcal{S}}(s,t;\ell)\), which is the transition semigroup associated to the short range generator \(\mathcal{S}(t)\). For any \({\bm x}\in\Lambda^n\) define
the ``delta-function'' on \(\Lambda^n\) as 
\[
\delta_{{\bm x}}({\bm u}):=\begin{cases}
\pi({\bm x})^{-1} &\mathrm{if} \,\, {\bm u}={\bm x} \\
0 &\mathrm{otherwise},
\end{cases}
\]
and denote the matrix entries of \(\mathcal{U}_{\mathcal{S}}(s,t)\) by \(\mathcal{U}_{\mathcal{S}}(s,t)_{{\bm x}{\bm y}}:=\braket{\delta_{{\bm x}}, \mathcal{U}_{\mathcal{S}}(s,t) \delta_{{\bm y}}}\).

\begin{proposition}\label{pro:finitespeed}
Fix any small \(\epsilon>0\), and \(\ell\ge N^\epsilon\). Then for any \({\bm x}, {\bm y}\in \Lambda^n\) with \(d({\bm x}, {\bm y})>N^\epsilon\ell\) it holds
\begin{equation}
\sup_{0\le s_1\le s_2\le s_1+\ell N^{-1}} |\mathcal{U}_{\mathcal{S}}(s_1,s_2;\ell)_{{\bm x}{\bm y}}|\le e^{-N^\epsilon/2},
\end{equation}
on the very high probability event \(\Omega\).
\end{proposition}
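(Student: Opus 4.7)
The plan is a standard weighted $L^2$ (Agmon-type) finite-speed-of-propagation argument: introduce an exponential weight, bound its $L^2$-growth rate under the short-range dynamics, and convert this to a pointwise decay estimate. Fix $\bm y \in \Lambda^n$, set $h_t(\bm x):=\mathcal{U}_\mathcal{S}(s_1, s_1+t;\ell)_{\bm x \bm y}$ so that $\partial_t h_t = \mathcal{S}(s_1+t) h_t$ with $h_0=\delta_{\bm y}$, and define
\begin{equation*}
\phi(\bm x):=\exp\bigl(\nu\,d(\bm x,\bm y)/\ell\bigr)
\end{equation*}
for a parameter $\nu>0$ (ultimately a fixed constant). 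Since any short-range jump $\bm x\mapsto \bm x_{ab}^{ij}$ satisfies $|i-j|\le\ell$ and alters $d(\cdot,\bm y)$ by at most $|i-j|$, we have the Lipschitz estimate $|\phi(\bm x_{ab}^{ij})-\phi(\bm x)|\le C(\nu|i-j|/\ell)\phi(\bm x)$, where $C$ absorbs the factor $e^\nu=O(1)$.

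Using self-adjointness of $\mathcal{S}$ with respect to $\pi$ (see \cite[App.~A.2]{2005.08425}) and the algebraic decomposition $\phi^2(\bm x')h_t(\bm x')-\phi^2(\bm x)h_t(\bm x)=\phi^2(\bm x')(h_t(\bm x')-h_t(\bm x))+(\phi^2(\bm x')-\phi^2(\bm x))h_t(\bm x)$, followed by Cauchy--Schwarz (using the first term to absorb half of the resulting weighted Dirichlet form, which is then discarded), a direct computation yields
\begin{equation*}
\partial_t \|\phi h_t\|_2^2 \le C\,\nu^2\, \mathcal{R}(t)\, \|\phi h_t\|_2^2,\qquad \mathcal{R}(t):=\sup_{i\in[N]}\sum_{j\colon |i-j|\le\ell}c_{ij}^\mathcal{S}(t)\,\frac{(i-j)^2}{\ell^2}.
\end{equation*}
On the rigidity event $\Omega$ we have $c_{ij}^\mathcal{S}(t)\lesssim N/(i-j)^2$ for $i,j\in\mathcal{J}$ with $|i-j|\le\ell$, so $\mathcal{R}(t)\lesssim N/\ell$. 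Gronwall then gives $\|\phi h_t\|_2^2 \le e^{C\nu^2 Nt/\ell}\|\phi h_0\|_2^2$, and since $\|\phi h_0\|_2^2=\phi(\bm y)^2/\pi(\bm y)\le 1$ and $t\le \ell/N$, we conclude $\|\phi h_t\|_2\le e^{C\nu^2/2}$.

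Converting to a pointwise bound via $|h_t(\bm x)|\le \pi(\bm x)^{-1/2}\phi(\bm x)^{-1}\|\phi h_t\|_2$ and $\pi\ge 1$, we get
\begin{equation*}
|\mathcal{U}_\mathcal{S}(s_1, s_1+t;\ell)_{\bm x\bm y}| \le \exp\!\Bigl(-\frac{\nu\,d(\bm x,\bm y)}{\ell}+\frac{C\nu^2}{2}\Bigr),
\end{equation*}
and for $d(\bm x,\bm y)>N^\epsilon\ell$, choosing any fixed $\nu$ larger than a threshold depending only on $C$ gives $\nu N^\epsilon-C\nu^2/2\ge N^\epsilon/2$ for all large $N$, proving the claim (uniformly in $s_1$). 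The main technical step is the commutator/cross-term estimate producing the scaling $\mathcal{R}(t)\lesssim N/\ell$: this requires both the short-range cutoff $|i-j|\le \ell$ and rigidity $c_{ij}^\mathcal{S}\lesssim N/(i-j)^2$, which combine with the time window $t\le \ell/N$ to yield an $O(1)$ exponent, while the non-symmetric factor $(n_j(\bm x)+1)/(n_i(\bm x)-1)$ in $\mathcal{S}_{ij}$ is harmless since $n_i,n_j\le 2n=O(1)$. This is essentially the same exponential-weight argument as in \cite[Prop.~5.2]{2005.08425} and \cite[Thm.~2.1, Lem.~2.4]{MR3690289} for closely related eigenvector flows, to which we defer for the remaining bookkeeping.
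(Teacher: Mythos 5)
The paper does not prove this proposition itself; it cites Bourgade--Huang--Yau~\cite[Thm.~2.1, Lem.~2.4]{MR3690289} and Marcinek--Yau~\cite[Prop.~5.2]{2005.08425}. Your Agmon-weight/energy strategy is indeed the one used in those references, and the overall architecture (exponential weight, Cauchy--Schwarz absorption into the weighted Dirichlet form, Gronwall, pointwise conversion) is correct. However, there is a genuine gap at the step where you assert \(\mathcal{R}(t)\lesssim N/\ell\) ``on the rigidity event \(\Omega\).''

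The justification you offer is \(c_{ij}^{\mathcal S}(t)\lesssim N/(i-j)^2\) for \(i,j\in\mathcal J\), \(|i-j|\le\ell\), but the event \(\Omega_\xi\) in~\eqref{def:Omega} only controls the distance of \(\lambda_i(t)\) to the quantile \(\gamma_i(t)\); it gives \emph{no lower bound} on the gaps \(\lambda_{i+1}(t)-\lambda_i(t)\). Concretely, on \(\Omega\) one only gets \(|\lambda_i-\lambda_j|\gtrsim |i-j|/N\) when \(|i-j|\gtrsim N^\xi\); for smaller \(|i-j|\) the gap can be arbitrarily small, so \(c_{i,i\pm1}(t)=1/(N(\lambda_i-\lambda_{i\pm1})^2)\) is not bounded by \(N\) pointwise in \(t\), and your Gronwall exponent \(\int_{s_1}^{s_2}\mathcal{R}(t)\,dt\) is not controlled. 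This is precisely the nontrivial technical obstacle that the cited proofs have to address: they do not use a pointwise rate bound but rather a time-integrated control of \(\int (\lambda_i(t)-\lambda_j(t))^{-2}\,dt\) for nearby indices, obtained from level-repulsion/De~Giorgi--Nash--Moser estimates for the eigenvalue gap process along Dyson Brownian motion (see~\cite{MR3372074, MR3541852}), or they work on an augmented high-probability event incorporating such gap control. Deferring to the references ``for the remaining bookkeeping'' is not appropriate here, since this is the main substantive input rather than bookkeeping; either invoke the time-integrated gap estimate explicitly and run the Gronwall step with \(\int_{s_1}^{s_2}\mathcal R(t)\,dt\lesssim N^{C\xi}\), or enlarge \(\Omega\) to include the needed gap control and verify it still has very high probability.
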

This finite speed of propagation together with the fact that the initial condition  \(h_0\) is localized in 
a \(K\)-neighbourhood of a fixed center \({\bm y}\) implies that \(h_t\) is supported in a \(K+N^\epsilon\ell \le 2K\) 
neighbourhood of \({\bm y}\) up an exponentially small tail part.

Using Proposition~\ref{pro:finitespeed}, by~\cite[Corollary 5.3]{2005.08425}, we immediately conclude the following lemma.

\begin{lemma}\label{lem:exchav}
For any times \(s_1,s_2\) such that \(0\le s_1\le s_2\le s_1+\ell N^{-1}\), and for any \({\bm y}\in \Lambda^n\) supported on \(\mathcal{J}\) (i.e.\ \({\bm y}_a\in \mathcal{J}\) for any \(a\in [2n]\)) 
for the commutator of the evolution \(\mathcal{U}_\mathcal{S}\)   and the averaging operator we have
\begin{equation}\label{eq:comm}
\norm{[\mathcal{U}_\mathcal{S}(s_1,s_2;\ell), \Av({\bm y},K)]}_{\infty,\infty}\le C(n)\frac{N^\epsilon\ell}{K},
\end{equation}
for some constant \(C(n)>0\) and for any small \(\epsilon>0\), on the very high probability event \(\Omega\).
\end{lemma}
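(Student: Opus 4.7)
The plan is to exploit the finite speed of propagation from Proposition~\ref{pro:finitespeed} together with the Lipschitz regularity of the cutoff function $\Av(\,\cdot\,;K,\bm{y})$. First, I would unpack the commutator acting on a test function $f$ pointwise at $\bm{x}\in \Lambda^n$:
\[
  \bigl([\mathcal{U}_{\mathcal{S}}(s_1,s_2;\ell),\Av(K,\bm{y})]\,f\bigr)(\bm{x}) = \sum_{\bm{z}\in \Lambda^n}\mathcal{U}_{\mathcal{S}}(s_1,s_2;\ell)(\bm{x},\bm{z})\bigl[\Av(\bm{z};K,\bm{y})-\Av(\bm{x};K,\bm{y})\bigr]f(\bm{z}),
\]
with the kernel $\mathcal{U}_{\mathcal{S}}(s_1,s_2;\ell)(\bm{x},\bm{z})$ normalised so that $\mathcal{U}_{\mathcal{S}}$ preserves constants. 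Two observations will do the bulk of the work: (i) the explicit formula for $\Av$ immediately gives the Lipschitz bound $|\Av(\bm{z};K,\bm{y})-\Av(\bm{x};K,\bm{y})|\le \norm{\bm{x}-\bm{z}}_1/K$, since at most $|\,\norm{\bm{x}-\bm{y}}_1-\norm{\bm{z}-\bm{y}}_1\,|\le \norm{\bm{x}-\bm{z}}_1$ of the indicator terms in the definition of $\Av$ can differ; and (ii) by Proposition~\ref{pro:finitespeed}, on the time window $[s_1,s_2]\subset [s_1,s_1+\ell/N]$ the kernel is bounded by $e^{-N^\epsilon/2}$ whenever $d(\bm{x},\bm{z})>N^\epsilon\ell$.

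I would then split the sum in $\bm{z}$ according to whether $d(\bm{x},\bm{z})\le N^\epsilon\ell$ or not. On the short-range piece, because $\bm{y}$ is supported in $\mathcal{J}$ and the coefficients $c_{ij}^{\mathcal{S}}$ vanish as soon as one of the indices lies outside $\mathcal{J}$, the dynamics never displaces coordinates that are already outside $\mathcal{J}$; consequently, on the support of $\mathcal{U}_{\mathcal{S}}$ the two metrics $d$ and $\norm{\cdot}_1$ are comparable up to a factor $2n$, yielding $\norm{\bm{x}-\bm{z}}_1\lesssim n\,N^\epsilon\ell$. Combining this with the Lipschitz bound (i) and the stochasticity-type estimate $\sum_{\bm{z}}|\mathcal{U}_{\mathcal{S}}(\bm{x},\bm{z})|\lesssim 1$, the short-range contribution has size $C(n)\,N^\epsilon\ell/K\cdot \norm{f}_\infty$. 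On the long-range piece, the super-polynomial bound of Proposition~\ref{pro:finitespeed} easily dominates the trivial $\norm{\Av}_\infty\le 1$ together with the polynomial-in-$N$ volume of $\Lambda^n$, so this tail is negligible.

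Taking the supremum over $\bm{x}\in\Lambda^n$ and over test functions with $\norm{f}_\infty\le 1$ would then give the claimed $L^\infty\to L^\infty$ estimate. The main obstacle I anticipate is the mild mismatch between the metric $d$ appearing in the finite-speed-of-propagation statement, which only counts indices inside $\mathcal{J}$, and the $\ell^1$-distance that naturally governs the Lipschitz constant of $\Av$. Resolving this cleanly relies on the fact that $\mathcal{S}$ is supported on edges with both endpoints in $\mathcal{J}$: any $\bm{z}$ reachable from $\bm{x}$ agrees with $\bm{x}$ on all coordinates outside $\mathcal{J}$, which is precisely what forces the comparison $\norm{\bm{x}-\bm{z}}_1\lesssim 2n\cdot d(\bm{x},\bm{z})$ on the support of the kernel and allows Proposition~\ref{pro:finitespeed} to interface with the Lipschitz estimate on $\Av$.
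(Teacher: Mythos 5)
Your argument is correct and is essentially the proof the paper delegates to \cite[Corollary 5.3]{2005.08425}: expand the commutator kernelwise, use the \(1/K\)-Lipschitz continuity of \(\Av(\cdot;K,\bm y)\) on the range allowed by the finite speed of propagation of \(\mathcal{U}_{\mathcal{S}}\) (Proposition~\ref{pro:finitespeed}), and absorb the long-range tail via the superexponential bound against the polynomial volume of \(\Lambda^n\). Your handling of the mismatch between \(d\) and \(\norm{\cdot}_1\) — noting that \(c^{\mathcal{S}}_{ij}\) vanishes unless \(i,j\in\mathcal{J}\) with \(|i-j|\le\ell\), so on the support of the kernel coordinates outside \(\mathcal{J}\) are frozen and coordinates inside the interval \(\mathcal{J}\) satisfy \(|x_a-z_a|=|\mathcal{J}\cap[\min(x_a,z_a),\max(x_a,z_a))|\), whence \(\norm{\bm x-\bm z}_1\le 2n\,d(\bm x,\bm z)\) — is exactly the point that makes this interface work.
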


Another straightforward application of the finite speed of propagation estimate in Proposition~\ref{pro:finitespeed} is the following bound \(\mathcal{U}(s_1,s_2)-\mathcal{U}_{\mathcal{S}}(s_1,s_2;\ell)\). This result was proven in~\cite[Proposition 5.7]{2005.08425} for a specific \(f\) but the same proof applies  for a general function \(f\).

\begin{lemma}\label{lem:shortlongapprox}
Let \(0\le s_1\le s_2\le s_1+\ell N^{-1}\), and \(f\) is a function on \(\Lambda^n\), 
then for any \({\bm x}\in \Lambda^n\) supported on \(\mathcal{J}\) it holds
\begin{equation}\label{eq:shortlong}
\Big| (\mathcal{U}(s_1,s_2)-\mathcal{U}_{\mathcal{S}}(s_1,s_2;\ell) ) f({\bm x}) \Big|\lesssim N^{1+n\xi}\frac{s_2-s_1}{\ell} \| f\|_\infty,
\end{equation}
for any small \(\xi>0\).
\end{lemma}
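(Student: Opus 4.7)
The plan is to use Duhamel's formula relating the two semigroups and then reduce to a uniform pointwise bound on the long-range part of the generator. Differentiating \(u\mapsto \mathcal{U}_\mathcal{S}(u,s_2;\ell)\mathcal{U}(s_1,u)\) and using the respective Kolmogorov equations yields
\[
\mathcal{U}(s_1,s_2) - \mathcal{U}_\mathcal{S}(s_1,s_2;\ell) = \int_{s_1}^{s_2} \mathcal{U}_\mathcal{S}(u,s_2;\ell)\bigl[\mathcal{L}(u)-\mathcal{S}(u)\bigr]\mathcal{U}(s_1,u)\,\dif u.
\]
Both \(\mathcal{L}(u)\) and \(\mathcal{S}(u)\) are jump generators with non-negative off-diagonal rates that annihilate constants, hence \(\mathcal{U}\) and \(\mathcal{U}_\mathcal{S}\) are \(L^\infty\to L^\infty\) contractions. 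Crucially, since \(c^\mathcal{S}_{ij}\equiv 0\) whenever \(i\notin\mathcal{J}\) or \(j\notin\mathcal{J}\), the short-range semigroup \(\mathcal{U}_\mathcal{S}(\cdot,\cdot;\ell)\) preserves the subspace of configurations supported on \(\mathcal{J}\). Consequently, evaluating the Duhamel identity at \(\bm x\) with \(\supp\bm x\subset \mathcal{J}\) only probes \((\mathcal{L}(u)-\mathcal{S}(u))\mathcal{U}(s_1,u)f(\bm x')\) at \(\bm x'\) also supported on \(\mathcal{J}\).

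The core estimate is then the pointwise bound, valid on \(\Omega\),
\[
\bigl|(\mathcal{L}(u)-\mathcal{S}(u))g(\bm x')\bigr| \le C(n)\frac{N^{1+n\xi}}{\ell}\,\|g\|_\infty \qquad \text{whenever } \supp\bm x'\subset\mathcal{J}.
\]
Since \(c_{ij}-c_{ij}^\mathcal{S}\) only survives for \((i,j)\) with \(|i-j|>\ell\) inside \(\mathcal J\), or with one of the indices outside \(\mathcal J\), and only \(i\in\supp\bm x'\subset\mathcal J\) give a nonvanishing contribution, it is enough to control, up to a combinatorial factor \(C(n)\),
\[
\sum_{i\in\mathcal J} n_i(\bm x')\Biggl[\sum_{\substack{j\in\mathcal J\\ |i-j|>\ell}} c_{ij}(u) + \sum_{j\notin \mathcal J} c_{ij}(u)\Biggr].
\]
Rigidity on \(\Omega\) gives \(|\lambda_i(u)-\lambda_j(u)|\gtrsim |i-j|/N\) for bulk indices up to \(N^\xi\) corrections, hence the first inner sum is at most \(\sum_{|k|>\ell}CN^{1+\xi}/k^2 = O(N^{1+\xi}/\ell)\), while the second inner sum is \(O(1)\) by the macroscopic spectral gap between a bulk index and the edge regime. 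Using \(\sum_i n_i(\bm x')=2n\) then yields the displayed bound.

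Inserting this pointwise estimate into the Duhamel identity and using \(\|\mathcal{U}_\mathcal{S}(u,s_2;\ell)\|_{\infty\to\infty}\le 1\) and \(\|\mathcal{U}(s_1,u)\|_{\infty\to\infty}\le 1\) gives
\[
\bigl|(\mathcal{U}(s_1,s_2)-\mathcal{U}_\mathcal{S}(s_1,s_2;\ell))f(\bm x)\bigr|\le (s_2-s_1)\,C(n)\frac{N^{1+n\xi}}{\ell}\,\|f\|_\infty,
\]
which is the claim. The main technical obstacle lies in the boundary regime of the sum over \(j\notin\mathcal J\) when \(i\in\mathcal J\) is close to \(\partial\mathcal J\): the spectral gap there can degenerate, so a small buffer (e.g.\ replacing the cutoff \(\delta\) by \(\delta/2\) in the definition of \(\mathcal J\)) is needed to maintain a uniform \(\delta\)-dependent lower bound on \(|\lambda_i-\lambda_j|\). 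This bookkeeping is standard and handled exactly as in~\cite[Proposition~5.7]{2005.08425}, where the proof is written for a specific \(f\) but only uses its sup-norm, and thus extends verbatim to general \(f\).
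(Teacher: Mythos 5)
Your Duhamel set-up, the \(L^\infty\)-contraction property, and the observation that \(\mathcal{U}_\mathcal{S}\) preserves configurations supported on \(\mathcal{J}\) are all fine, and the bound \(\sum_{j:|i-j|>\ell}c_{ij}\lesssim N^{1+\xi}/\ell\) by rigidity is exactly the input the paper cites. The genuine gap is your treatment of the terms with \(j\notin\mathcal{J}\): the claim that \(\sum_{j\notin\mathcal{J}}c_{ij}(u)=\mathcal{O}(1)\) ``by the macroscopic spectral gap between a bulk index and the edge regime'' is false, because \(\mathcal{J}^c\) is not only the edge regime --- it contains indices immediately adjacent to \(\partial\mathcal{J}\). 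For \(i\in\mathcal{J}\) with \(d:=\mathrm{dist}(i,\mathcal{J}^c)\le\ell\) one has, by rigidity, \(\sum_{j\notin\mathcal{J}}c_{ij}\sim N^{1+\xi}/d\), which can be as large as \(N^{1+\xi}\) and in particular is much bigger than \(N^{1+\xi}/\ell\). Hence your key pointwise bound \(|(\mathcal{L}-\mathcal{S})g(\bm x')|\lesssim N^{1+n\xi}\ell^{-1}\norm{g}_\infty\) does not hold uniformly over \(\bm x'\) supported on \(\mathcal{J}\); it fails precisely when a particle of \(\bm x'\) sits within distance \(o(\ell)\) of \(\partial\mathcal{J}\), and then the Duhamel integral only yields an error of order \((s_2-s_1)N^{1+\xi}/d\) rather than \((s_2-s_1)N^{1+\xi}/\ell\).

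You do flag this boundary issue at the end, but the proposed repair --- shrinking \(\delta\) to \(\delta/2\) in the definition of \(\mathcal{J}\) --- cannot be carried out inside your argument: \(\mathcal{J}\) is fixed both by the definition of the short-range coefficients \(c^{\mathcal{S}}_{ij}\) in~\eqref{eq:ccutoff} and by the hypothesis on \(\bm x\), so redefining it changes the statement, not the proof. The missing ingredient is exactly the one the paper's proof invokes and you never use: the finite speed of propagation of \(\mathcal{U}_\mathcal{S}\) (Proposition~\ref{pro:finitespeed}). It guarantees that, when the initial configuration \(\bm x\) lies at distance \(\gg N^\epsilon\ell\) from \(\partial\mathcal{J}\) (as it does in every application of the lemma, since the relevant configurations lie within \(3K\ll |\mathcal{J}|\) of a \(\bm y\) deep in the bulk), the Duhamel integrand only samples configurations \(\bm x'\) still at distance \(\gg\ell\) from \(\partial\mathcal{J}\), where the offending sum \(\sum_{j\notin\mathcal{J}}c_{ij}\) is indeed \(\lesssim N^{1+\xi}/(N^\epsilon\ell)\) and your estimate closes. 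Your J-preservation observation is not a substitute: it controls membership in \(\mathcal{J}\) but says nothing about the distance to its boundary. So either strengthen the localisation step by invoking Proposition~\ref{pro:finitespeed} as in~\cite[Proposition 5.7]{2005.08425}, or restrict the statement to \(\bm x\) supported well inside \(\mathcal{J}\); as written, the middle estimate of your proof is incorrect.
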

\begin{proof}
Using Proposition~\ref{pro:finitespeed}, the proof of~\eqref{eq:shortlong} is completely analogous to the proof of~\cite[Proposition 5.7]{2005.08425}, since the only input used in~\cite[Proposition 5.7]{2005.08425} is that
\[
\sum_{j: |j-i|>\ell}\frac{1}{N(\lambda_i-\lambda_j)^2}\le \frac{N^{1+\xi}}{\ell}
\]
on \(\Omega\), which follows by rigidity.
\end{proof}

Before stating the main result of this section we define the set \(\widehat{\Omega}\) on which the local laws for certain products of resolvents and traceless matrices \(A\) hold, i.e.\ for a small \(\omega>2\xi>0\) we define 
 \begin{equation}  
 \label{eq:hatomega}
 \begin{split}
\widehat{\Omega}=\widehat{\Omega}_{\omega, \xi}:&=\bigcap_{\substack{z_i: \Re z_i\in [-3,3], \atop |\Im z_i|\in [N^{-1+\omega},10]}}\Bigg[\bigcap_{k=3}^n \left\{\sup_{0\le t \le T}(\rho_t^*)^{-1/2}\big|\braket{G_t(z_1)A\dots G_t(z_k)A}\big|\le \frac{N^{\xi+(k-3)/2}}{\sqrt{\eta_*}}\right\} \\
&\quad \cap\left\{\sup_{0\le t \le T}(\rho_{1,t}\rho_{2,t})^{-1}\big|\braket{\Im G_t(z_1)A\Im G_t(z_2)A}-\Im m_t(z_1)\Im m_t(z_2)\braket{A^2}\big|\le \frac{N^\xi}{\sqrt{N\eta^*}}\right\}\\
&\quad\cap \left\{\sup_{0\le t \le T}(\rho_{1,t})^{-1/2}\big|\braket{G_t(z_1)A}\big|\le \frac{N^\xi}{N\sqrt{|\Im z_1|}}\right\}\Bigg],
\end{split}
\end{equation}
where \(\eta_*:=\min\set[\big]{|\Im z_i|\given i\in[k]}\), $\rho_{i,t}:=|\Im m_t(z_i)|$, and $\rho_t^*:=\max_i\rho_{i,t}$. The fact that \(\widehat{\Omega}\) is a very high probability set follows by~\cite[Theorem 2.6]{2012.13215} for \(k=1\), by~\cite[Eq. \cred{(3.10)}]{2012.13215} for \(k=2\), and by Proposition~\ref{pro:llaw} for \(k\ge 3\). In particular, since \(\Im m_t(z_1)\Im m_t(z_2)\braket{A^2}\) is bounded by $\rho_{1,t}\rho_{2,t}$ for \(k=2\), we have
\[
\sup_{0\le t\le T}\sup_{z_1, z_2}(\rho_{1,t}\rho_{2,t})^{-1} \braket{\Im G_t(z_1) A\Im G_t(z_2) A}\lesssim 1,
\] 
on the very high probability event \(\widehat{\Omega}_{\omega,\xi}\),
which, by spectral theorem, implies 
\begin{equation}\label{eq:apriori}
\sup_{0\le t\le T}\max_{i,j \in[N]} |\braket{{\bm u}_i(t), A {\bm u}_j(t)}|\le N^{-1/2+\omega} \qquad \,\,
 \mbox{on \(\;\widehat{\Omega}_{\omega,\xi}\cap \Omega_\xi\)}.
\end{equation}

\begin{proposition}\label{prop:mainimprov}
For any scale satisfying \(N^{-1}\ll \eta\ll T_1\ll \ell N^{-1}\ll K N^{-1}\), and any small \(\epsilon, \xi>0\) it holds
\begin{equation}\label{eq:l2b}
\norm{h_{T_1}(\cdot; \ell, K, {\bm y})}_2\lesssim K^{n/2}\mathcal{E},
\end{equation}
with
\begin{equation}\label{eq:basimpr}
\mathcal{E}:= N^{n\xi}\left(\frac{N^\epsilon\ell}{K}+\frac{NT_1}{\ell}+\frac{N\eta}{\ell}+\frac{N^\epsilon}{\sqrt{N\eta}}+\frac{1}{\sqrt{K}}\right),
\end{equation}
uniformly for particle configuration \({\bm y}\in \Lambda^n\) supported on \(\mathcal{J}\) and eigenvalue trajectory \({\bm \lambda}\) on the high probability event \(\Omega_\xi \cap \widehat{\Omega}_{\omega,\xi}\).
\end{proposition}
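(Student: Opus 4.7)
The strategy is the energy method advertised in the introduction, combining the localization inputs (Proposition~\ref{pro:finitespeed}, Lemmas~\ref{lem:exchav} and~\ref{lem:shortlongapprox}) with the multi-resolvent local laws encoded in \(\widehat\Omega_{\omega,\xi}\). The trivial bound \(\|h_{T_1}\|_2\le\|h_0\|_2\lesssim K^{n/2} N^{n\xi}\) is too weak, so the whole game is to gain the factor \(\mathcal{E}\) via a lower bound on the Dirichlet form.

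The starting point is the energy identity
\[
\frac{d}{dt}\norm{h_t}_2^2 = -2 D^\mathcal{S}_t(h_t),\qquad D^\mathcal{S}_t(h):=\braket{h,-\mathcal{S}(t)h}_{\Lambda^n,\pi},
\]
which is immediate from the self-adjointness of \(\mathcal{S}(t)\) with respect to \(\pi\). The first key step is to replace \(D^\mathcal{S}_t\) by a \emph{smaller}, product-type Dirichlet form \(\widetilde D_t\) that averages in all \(2n\) coordinate directions simultaneously. This is the discrete implementation of the operator inequality~\eqref{replace}: the generator \(\mathcal{S}\) behaves like a sum of one-dimensional hopping operators \(\sum_r |p_r|\), and~\eqref{replace} gives
\[
\widetilde D_t(h)\le C(n)\, D^\mathcal{S}_t(h),
\]
so it is legitimate to work with \(\widetilde D_t\) instead. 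The reason this trade is worthwhile is that \(\widetilde D_t(h_t)\), after commuting the averaging with the short-range semigroup via Lemma~\ref{lem:exchav} and trading the short-range evolution for the full DBM via Lemma~\ref{lem:shortlongapprox}, can be rewritten via the spectral theorem at resolution \(\eta = N^{-1+\omega}\) as a linear combination of multi-resolvent traces
\[
\braket{\Im G_t(z_1) A\, \Im G_t(z_2) A\, \cdots\, \Im G_t(z_k) A}, \qquad 2\le k\le n,
\]
of exactly the type addressed in~\eqref{eq:hatomega}. For \(k=2\) the two-\(G\) local law of~\cite{2012.13215} supplies the crucial smallness \(N^\xi/\sqrt{N\eta}\), while for \(k\ge 3\) the generalization proven in Section~\ref{sec:llaw} gives the analogous gain. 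The bookkeeping of the various approximations produces the error terms constituting \(\mathcal{E}\): \(N^\epsilon \ell/K\) from Lemma~\ref{lem:exchav}, \(NT_1/\ell\) from Lemma~\ref{lem:shortlongapprox}, \(N\eta/\ell\) from using rigidity to pass from \(\lambda_i\) to the quantile \(\gamma_i\) inside the \(\ell\)-range, \(N^\epsilon/\sqrt{N\eta}\) from the local law itself, and the combinatorial factor \(1/\sqrt K\) from the \(K^n\) volume of the localized support combined with the normalization in~\eqref{eq:deff}.

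Combining these inputs yields a differential inequality of the Gronwall type
\[
\frac{d}{dt}\norm{h_t}_2^2 \;\le\; -c\,\norm{h_t}_2^2 \;+\; K^n\,\mathcal{E}(t)^2,
\]
the lower bound \(c\norm{h_t}_2^2\) coming from the Poincaré inequality for \(\widetilde D_t\) on the localized box of effective volume \(K^n\), and the error coming from the multi-\(G\) identification of the box-averaged part of \(h_t\). Integrating over \([0,T_1]\) (using that \(\|h_t\|_2^2\) is monotone non-increasing) gives \(\|h_{T_1}\|_2^2 \lesssim K^n\mathcal{E}^2\), which is~\eqref{eq:l2b}.

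The main obstacle is the identification of the "averaged in all directions" quantity produced by \(\widetilde D_t(h_t)\) with the multi-\(G\) traces of~\eqref{eq:hatomega}. This requires matching, on one hand, the combinatorial prefactor \((2\braket{A^2})^{-n/2}[(n-1)!!]^{-1}\mathcal{M}(\bm\eta)^{-1}\) built into the definition~\eqref{eq:deff} of \(f_{\bm\lambda,t}\), with, on the other hand, the asymmetric jump weights \((n_j+1)/(n_i-1)\) in \(\mathcal{S}\) and the combinatorial factors arising from the perfect-matching sum \(\sum_{G\in\mathcal{G}_{\bm\eta}} P(G)\). The two-\(G\) case is essentially~\eqref{2av} and was available from~\cite{2012.13215}; the new multi-\(G\) estimates from Section~\ref{sec:llaw} are what make this scheme run uniformly in \(n\).
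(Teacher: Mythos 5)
Your overall scheme --- the energy identity, the replacement of the Dirichlet form of $\mathcal S$ by the product-type form of a generator that jumps in all coordinate directions at once (the discrete version of \eqref{replace}, i.e.\ Lemma~\ref{lem:replacement} and the generator $\mathcal A$ of \eqref{eq:defAgen}), the use of Lemmas~\ref{lem:exchav} and~\ref{lem:shortlongapprox} to pass to $g_t$, the identification of the fully averaged quantity through the multi-$G$ local laws on $\widehat\Omega$, and a Gronwall closure --- is the same as the paper's. The genuine gap is in the dissipation mechanism. You claim that the term $-c\norm{h_t}_2^2$ comes from a Poincar\'e inequality for $\widetilde D_t$ on the localized box of effective volume $K^n$, and you write the Gronwall inequality with an order-one rate, $\partial_t\norm{h_t}_2^2\le -c\norm{h_t}_2^2+K^n\mathcal E^2$. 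A spectral gap of the hopping dynamics restricted to a box of $K$ indices is at best of order $N/K$ (the kernel $c_{ij}\sim N|i-j|^{-2}$ acts like $|p|$ on scale $K/N$, and the short-range cutoff only makes the gap smaller), and since $T_1\ll \ell N^{-1}\ll KN^{-1}$ one has $(N/K)\,T_1\ll 1$: over the window $[0,T_1]$ such a gap produces essentially no decay, so integrating your inequality leaves the transient $e^{-cT_1}\norm{h_0}_2^2\approx\norm{h_0}_2^2\sim K^nN^{2n\xi}$, which is not $\lesssim K^n\mathcal E^2$. In the paper no Poincar\'e inequality is used at this stage: the decay comes from the diagonal (mass) part of $-\mathcal A$, whose rate is $1/\eta$ because $\mathcal A$ carries the prefactor $1/\eta$ in \eqref{eq:defAgen} and because $\sum_j a_{ij}=\braket{\Im G_t}\approx\Im m_t\sim 1$ by the single-$G$ local law \eqref{eq:singlegllaw}; the multi-$G$ laws enter only to show that the cross term $\sum_{\bm j}\prod_r a_{i_rj_r}\,g_t(\bm x^{\bm i\bm j}_{\bm a\bm b})$ equals $\bm1(n\ \mathrm{even})$ up to $\mathcal O(\mathcal E)$. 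The correct differential inequality is $\partial_t\norm{h_t}_2^2\le-\tfrac{C_1(n)}{\eta}\norm{h_t}_2^2+\tfrac{C_3(n)}{\eta}K^n\mathcal E^2$, and the conclusion then uses $T_1\gg\eta$; with your stated rate the argument does not close.

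Two smaller misattributions in the bookkeeping: the term $N\eta/\ell$ in \eqref{eq:basimpr} is not a rigidity/quantile error but comes from removing the short-range cutoff in $a_{ij}^{\mathcal S}$, since $\sum_{|j-i|>\ell}a_{ij}\le N\eta/\ell$ as in \eqref{eq:longreg}; and the $1/\sqrt K$ term comes from the coinciding-index configurations, a codimension-one subset of the localized region of relative volume $1/K$ (estimated with $\norm{h_t}_\infty\le N^{n\xi}$), not from the normalization in \eqref{eq:deff}. These are fixable details, but the Poincar\'e-based dissipation is the step that would actually fail, so as written the proposal has a genuine gap even though its architecture matches the paper's.
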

\begin{proof}
Before presenting the formal proof, we explain the main idea. 
 In the sense of Dirichlet forms, we will replace the generator \(\mathcal{S}(t)\)~\eqref{g-1}--\eqref{g-2}, which
is the \emph{sum} of one-dimensional generators, with the generator \(\mathcal{A}(t)\) that corresponds to the \emph{product} of
 such operators 
 (see~\eqref{eq:defAgen} below for its definition). Considering that  \(c_{ij}\) decays 
 proportionally  with \(|i-j|^{-2}\) (using rigidity in~\eqref{eq:defc}), it is the kernel of the discrete approximation
 of the one dimensional operator \(|p|=\sqrt{-\Delta}\) on \(\R\) but lifted to the \(n\)-dimensional space \(\Lambda^{n}\). Therefore 
 one may think of \(\mathcal{L}(t)\), and its short range approximation
 \(\mathcal{S}(t)\), as a discrete analogue of \(|p_1|+|p_2|+ \cdots + |p_n|\), i.e.
 the sum of \(|p|\)-operators along all the \(n\) coordinate directions. 
 As explained in the introduction, using the short distance  regularisation of the underlying lattice,
 we really have \(\eta^{-1}[1-e^{-\eta|p|}]\) instead of \(|p|\) and the operator inequality~\eqref{replace} holds.
 The left hand side of~\eqref{replace}
 corresponds to the positive operator \((-\mathcal{A})\) and
 the right hand side corresponds to \((-\mathcal{S})\). The key Lemma~\ref{lem:replacement} below asserts that
 \(0\le (-\mathcal{A})\le C(n)(-\mathcal{S})\) in the sense of quadratic forms.
 The main purpose of this replacement is that \(\mathcal{A}\) averages independently 
 in every direction, therefore \(\mathcal{A}\) acting on the function  \(g=f\circ \phi\)
 has the effect that it averages in \emph{all} the \(i_1, i_2, \ldots\) indices in the definition of \(P(G)\),~\eqref{eq:defpg}.
 These averages yield traces of products \(\Im G A \Im G A \ldots \Im G A\) for which we have
 a good local law on the set \(\widehat\Omega\). 
 
 We now  explain the origin of the errors in~\eqref{eq:basimpr}.
 The error in the multi-\(G\) local laws give the crucial fourth error \(1/\sqrt{N\eta}\)
 in~\eqref{eq:basimpr}. The other errors come from various approximations:
 the dynamics commutes with the localization 
 up to an error of order \(\ell/K\) by Lemma~\ref{lem:exchav}, the short range cutoff dynamics
 approximates the original one up to time \(T_1\) with  an error of order \(NT_1/\ell\),
 while the removed long range part contributes with an error or order \(N\eta/\ell\) to the Dirichlet form.
  The last \(1/\sqrt{K}\) error term is technical; we do the analysis for typical index configurations
  where no two indices coincide and the coinciding indices have a volume factor of order \(1/\sqrt{K}\)
  smaller than the total volume.

 Now we start with the actual proof.  All the estimates in this proof hold uniformly for \({\bm y}\in \Lambda^n\) supported on \(\mathcal{J}\), hence from now on we fix a particle configuration \({\bm y}\). To make the presentation clearer we drop the parameters \({\bm y}, K, \ell\) 
and use the short-hand notations \(h_t({\bm x})=h_t({\bm x}; \ell, K, {\bm y})\), \(\Av=\Av(K,{\bm y})\), \(\Av({\bm x})=\Av({\bm x}; K,{\bm y})\), etc. Moreover, for any \({\bm i}, {\bm j}\in [N]^n\) by \(\sum_{{\bm i}}^*\) or \(\sum_{{\bm i}{\bm j}}^*\) we denote  the summations over indices that are all distinct, i.e.\ the \(i_1,\dots, i_n\), in the first sum, and \(i_1,\dots, i_n\), \(j_1,\dots, j_n\), in the second sum are all different. The same convention holds for summations over \({\bm a}, {\bm b}\in [2n]^n\).

Let
\begin{equation}
a_{ij}=a_{ij}(t):=\frac{\eta}{N((\lambda_i(t)-\lambda_j(t))^2+\eta^2)},
\end{equation}
and define their short range version \(a_{ij}^\mathcal{S}\) as in~\eqref{eq:ccutoff}. Define the operator \(\mathcal{A}=\mathcal{A}(t)\) by
\begin{equation}\label{eq:defAgen}
\mathcal{A}(t):=\sum_{{\bm i}, {\bm j}\in [N]^n}^*\mathcal{A}_{ {\bm i}{\bm j}}(t), \quad \mathcal{A}_{{\bm i}{\bm j}}(t)h({\bm x}):=\frac{1}{\eta}\left(\prod_{r=1}^n a_{i_r,j_r}^\mathcal{S}(t)\right)\sum_{{\bm a}, {\bm b}\in [2n]^n}^*(h({\bm x}_{{\bm a}{\bm b}}^{{\bm i}{\bm j}})-h({\bm x})),
\end{equation}
where
\begin{equation}
\label{eq:lotsofjumpsop}
{\bm x}_{{\bm a}{\bm b}}^{{\bm i}{\bm j}}:={\bm x}+\left(\prod_{r=1}^n \delta_{x_{a_r}i_r}\delta_{x_{b_r}i_r}\right)\sum_{r=1}^n (j_r-i_r) ({\bm e}_{a_r}+{\bm e}_{b_r}).
\end{equation}
We now explain the difference between the jump operator~\eqref{eq:lotsofjumpsop} and the one defined in~\eqref{eq:jumpop}. The operator~\eqref{eq:jumpop} changes two entries of ${\bm x}$ per time, instead ${\bm x}_{{\bm a}{\bm b}}^{{\bm i}{\bm j}}$ changes all the coordinates of ${\bm x}$ at the same time, i.e. let ${\bm i}:=(i_1,\dots, i_n), {\bm j}:=(j_1,\dots, j_n)\in [N]^n$, with $\{i_1,\dots,i_n\}\cap\{j_1,\dots, j_n\}=\emptyset$, then ${\bm x}_{{\bm a}{\bm b}}^{{\bm i}{\bm j}}\ne {\bm x}$ iff for all $r\in [n]$ it holds that $x_{a_r}=x_{b_r}=i_r$, see e.g. 
\[ \begin{tikzpicture}
  [every node/.style={fill, circle, inner sep = 1.5pt},baseline=-15pt,>=stealth']
\draw (-1.25,0) -- (2.25,0);
\draw (-1.25,-1) -- (2.25,-1);
\node[label=below:$i_1$] (i1) at (0,0) {};
\node[label=below:$i_2$] (i2) at (1,0) {};
\node[label=below:$j_1$] (j1) at (-1,-1) {};
\node[label=below:$j_2$] (j2) at (2,-1) {};
\draw [shorten >=1pt,shorten <=1pt,->] (i1) -- (j1);
\draw [shorten >=1pt,shorten <=1pt,->] (i2) -- (j2);
\end{tikzpicture}\quad \Leftrightarrow \quad 
\begin{tikzpicture}
  [every node/.style={fill, circle, inner sep = 1.5pt},baseline=-15pt,>=stealth']
\draw (-1.25,0) -- (2.25,0);
\draw (-1.25,-1) -- (2.25,-1);
\node[label=below:$i_1$] (i1) at (0,0) {};
\node[label=below:$i_2$] (i2) at (1,0) {};
\node[label=below:$j_1$] (j1) at (-1,-1) {};
\node[label=below:$j_2$] (j2) at (2,-1) {};
\node (i1p) at ($(i1)+(0,.33)$) {};
\node (i2p) at ($(i2)+(0,.33)$) {};
\node (j1p) at ($(j1)+(0,.33)$) {};
\node (j2p) at ($(j2)+(0,.33)$) {};
\draw[thick,gray,rounded corners]  ($(i1p.north west)+(-0.1,0.1)$) rectangle ($(i1.south east)+(0.1,-0.1)$);
\draw[thick,gray,rounded corners]  ($(j1p.north west)+(-0.1,0.1)$) rectangle ($(j1.south east)+(0.1,-0.1)$);
\draw[thick,gray,rounded corners]  ($(i2p.north west)+(-0.1,0.1)$) rectangle ($(i2.south east)+(0.1,-0.1)$);
\draw[thick,gray,rounded corners]  ($(j2p.north west)+(-0.1,0.1)$) rectangle ($(j2.south east)+(0.1,-0.1)$);
\draw [shorten >=8pt,shorten <=8pt,->] ($(i1)!0.5!(i1p)$) -- ($(j1)!0.5!(j1p)$);
\draw [shorten >=8pt,shorten <=8pt,->] ($(i2)!0.5!(i2p)$) -- ($(j2)!0.5!(j2p)$);
\end{tikzpicture}\quad \Leftrightarrow\quad
\bm x=\begin{pmatrix}i_1\\ i_1\\ i_2\\ i_2\end{pmatrix}\mapsto  \bm x_{\bm a\bm b}^{\bm i\bm j}=\begin{pmatrix}j_1\\ j_1\\ j_2\\ j_2\end{pmatrix}.
\]

Note that \(\mu({\bm x})\equiv 1\) on \(\Lambda^n\) is a reversible measure for the generator \(\mathcal{A}(t)\) (as a consequence of \(({\bm x}_{{\bm a}{\bm b}}^{{\bm i}{\bm j}})_{{\bm a}{\bm b}}^{{\bm j}{\bm i}}={\bm x}\) for any fixed \({\bm a}, {\bm b}\) and for any \({\bm x}\) such that \({\bm x}_{{\bm a}{\bm b}}^{{\bm i}{\bm j}}\ne {\bm x}\)), and that \(\pi({\bm x})\sim C(n) \mu({\bm x})\) for all \({\bm x}\in\Lambda^n\) (see~\eqref{eq:boundsrevmeasure}). We define the scalar product with respect to the measure \(\mu({\bm x})\) analogously to~\eqref{eq:scalpro}, and we denote it by \(\braket{\cdot,\cdot}_{\Lambda^n,\mu}\).

We now analyse the time evolution  of \( \norm{h_t}_2^2\):
\begin{equation}\label{eq:l2der}
\partial_t \norm{h_t}_2^2=2\braket{h_t, \mathcal{S}(t) h_t}_{\Lambda^n}.
\end{equation}

The main ingredient to give an upper bound on~\eqref{eq:l2der} is the following lemma, whose proof is postponed at the end of this section.

\begin{lemma}\label{lem:replacement}
Let \(\mathcal{S}(t)\), \(\mathcal{A}(t)\) be the generators defined in~\eqref{g-2} and~\eqref{eq:defAgen}, respectively. Then there exists a constant \(C(n)>0\), which depends only on \(n\), such that
\begin{equation}\label{eq:fundbound}
\braket{h, \mathcal{S}(t) h}_{\Lambda^n, \pi}\le C(n) \braket{h, \mathcal{A}(t) h}_{\Lambda^n,\mu}\le 0,
\end{equation}
for any \(h\in L^2(\Lambda^n)\), on the very high probability set \( \Omega_\xi\cap\widehat{\Omega}_{\xi,\omega}\).
\end{lemma}

From now on by \(C(n)\) we denote a constant that depends only on \(n\) and that may change from line to line.

Next, combining~\eqref{eq:l2der}--\eqref{eq:fundbound}, and using that \({\bm x}_{{\bm a}{\bm b}}^{{\bm i}{\bm j}}={\bm x}\) unless \({\bm x}_{a_r}={\bm x}_{b_r}=i_r\) for all \(r\in [n]\), we conclude that
\begin{equation}\label{eq:boundderl2}
\begin{split}
\partial_t \norm{h_t}_2^2&\le C(n) \braket{h_t,\mathcal{A}(t) h_t}_{\Lambda^n,\mu} \\
&=\frac{C(n) }{2\eta}\sum_{{\bm x}\in \Lambda^n}\sum_{{\bm i},{\bm j}\in [N]^n}^* \left(\prod_{r=1}^n a_{i_r j_r}^\mathcal{S}(t)\right)\sum_{{\bm a}, {\bm b}\in [2n]^n}^*\overline{h_t}({\bm x})\big(h_t({\bm x}_{{\bm a}{\bm b}}^{{\bm i} {\bm j}})-h_t({\bm x})\big)\Psi({\bm x}),
\end{split}
\end{equation}
where for any fixed \({\bm i},{\bm a},{\bm b}\) we defined
\[
\Psi(\bm x)=\Psi_{{\bm i},{\bm a},{\bm b}}(\bm x):=\left(\prod_{r=1}^n \delta_{x_{a_r}i_r}\delta_{x_{b_r}i_r}\right).
\]

Define 
\begin{equation}
\Gamma:=\{ {\bm x}\in\Lambda^n : \, d({\bm x},{\bm y})\le 3 K\} \subset\Lambda^n,
\end{equation}
and note that by the finite speed of propagation estimate in Proposition~\ref{pro:finitespeed}
and the support of \(h_0({\bm x})\),  the function \(h_t({\bm x})\) is supported on \(\Gamma\) 
up to an  exponentially small error term (see~\cite[Eqs. (5.76)-(5.77)]{2005.08425} for 
a more detailed calculation). For simplicity, 
 for the rest of the proof we treat  \(h_t({\bm x})\) as if it were supported on \(\Gamma\), neglecting the 
 exponentially small error term of size \(\pi(\Lambda^n\setminus \Gamma)e^{-N^\epsilon}\le N^{2n}e^{-N^\epsilon}\).
 Since the dynamics is a linear contraction in \(L^\infty\), this small error term remains small throughout the 
 whole evolution.

Now we consider the term with \(|h_t({\bm x})|^2\) in~\eqref{eq:boundderl2} (here we use the notation \(\Psi({\bm x})=\Psi_{{\bm i},{\bm a},{\bm b}}({\bm x})\)):
\begin{equation}\label{eq:neweq}
\begin{split}
&-\sum_{{\bm x}\in\Gamma}|h_t({\bm x})|^2\sum_{{\bm a},{\bm b}\in [2n]^n}^*\sum_{{\bm i}, {\bm j}}^*\Psi({\bm x}) \left(\prod_{r=1}^n a_{i_r j_r}^\mathcal{S}(t)\right) \\
&\qquad=-\sum_{{\bm x}\in\Gamma}|h_t({\bm x})|^2\sum_{{\bm a},{\bm b}\in [2n]^n}^*\sum_{{\bm i}}^*\Psi({\bm x})\prod_{r=1}^n\left(\sum_{j_r} a_{i_r j_r}^\mathcal{S}(t)+\mathcal{O}\left(\frac{1}{N\eta}\right)\right) \\
&\qquad=-\sum_{{\bm x}\in\Gamma}|h_t({\bm x})|^2\sum_{{\bm a},{\bm b}\in [2n]^n}^*\sum_{{\bm i}}^*\Psi({\bm x})\prod_{r=1}^n\left( \sum_{j_r} a_{i_r j_r}(t)+\mathcal{O}\left(\frac{1}{N\eta}+\frac{N\eta}{\ell}\right)\right)\\
&\qquad\le - C(n)\sum_{{\bm x}\in\Gamma}|h_t({\bm x})|^2\sum_{{\bm a},{\bm b}\in [2n]^n}^*\sum_{{\bm i}}^* \Psi({\bm x})
\end{split}
\end{equation}
on the very high probability event \(\Omega_\xi\), where the error term in the second line comes from adding back the 
finitely many excluded summands \( j_r\in \{i_1,\dots, i_n\}\) and \(j_r\in \{j_1, \ldots, j_{r-1},j_{r+1}, \ldots j_n\}\).
The  new error in the third line  comes from removing the short range restriction from \(a_{i_rj_r}^{\mathcal S}\), i.e.\ adding back the regimes \(|j_r-i_r|> \ell\) using 
\begin{equation}\label{eq:longreg}
   \sum_{j_r: |j_r-i_r|> \ell } a_{i_r j_r}(t) \le \frac{N\eta}{\ell}.
\end{equation}
 Finally,   to go from the third to the fourth line in~\eqref{eq:neweq} we used the local law
\begin{equation}\label{eq:singlegllaw}
\sum_{j_r}a_{i_r j_r}(t)=\braket{\Im G_t(\lambda_{i_r}+\ii\eta)}= \Im m_t(\lambda_{i_r}+\ii\eta)+\mathcal{O}\big(N^\xi (N\eta)^{-1}\big),
\end{equation}
with very high probability on the event \(\Omega_\xi\), and that \(\Im m_t(\lambda_{i_r}+\ii\eta)\sim 1\) in the bulk
whenever \(\eta\ge N^{-1+\xi}\).

We now bound the last line in~\eqref{eq:neweq} in terms of \(-\norm{h_t}^2\) plus a small error term by removing the restriction from the \({\bm i}\)-summation:
\begin{equation}\label{eq:newbdiag}
\begin{split}
-\sum_{{\bm x}\in\Gamma}|h_t({\bm x})|^2\sum_{{\bm a},{\bm b}\in [2n]^n}^*\sum_{{\bm i}}^* \Psi({\bm x})&=-\sum_{{\bm x}\in\Gamma}|h_t({\bm x})|^2\sum_{{\bm a},{\bm b}\in [2n]^n}^*\sum_{{\bm i}} \Psi({\bm x}) \\
&\quad +\sum_{{\bm x}\in\Gamma}|h_t({\bm x})|^2\sum_{{\bm a},{\bm b}\in [2n]^n}^*\left(\sum_{{\bm i}}-\sum_{{\bm i}}^*\right)\Psi({\bm x}) \\
&\le - C(n) \norm{h_t}_2^2+C(n)N^\xi K^{n-1}.
\end{split}
\end{equation}
To estimate the first term in the right hand side we used that
\(\sum_{{\bm a}{\bm b}}^*\sum_{{\bm i}} \Psi_{{\bm i},{\bm a},{\bm b}}({\bm x})\ge 1\), \(1\ge C(n) \pi({\bm x})\) for all \({\bm x}\in \Gamma\), and that we can add back the regime \(\Lambda^n\setminus \Gamma\) at the price of a negligible \(N^n e^{-N^\epsilon}\) error term, by finite speed of propagation.
For the second term in the right hand side of~\eqref{eq:newbdiag}
we estimated \(\norm{h_t}_\infty\le N^\xi\)
as a consequence of \(\norm{h_0}_\infty\le N^\xi\) and the fact that the evolution is an \(L^\infty\)-contraction. Finally we used  the fact that
\[
\sum_{{\bm a},{\bm b}\in [2n]^n}^*\left(\sum_{{\bm i}}-\sum_{{\bm i}}^*\right)\Psi_{{\bm i},{\bm a},{\bm b}}({\bm x})\ne 0,
\]
only if there exist \(a,b,c,d\in [2n]\), all distinct, such that \({\bm x}_a={\bm x}_b={\bm x}_c={\bm x}_d\). The  volume of this one codimensional subset of \(\Gamma\) is \(C(n) K^{n-1}\), i.e.\ by  factor \(K^{-1}\) smaller than the volume of \(\Gamma\) which is of order \(K^n\).

Finally, combining~\eqref{eq:neweq} and~\eqref{eq:newbdiag}, we conclude the estimate for the term containing \(|h_t({\bm x})|^2\) in~\eqref{eq:boundderl2}:
\begin{equation}\label{eq:addb}
 -\sum_{{\bm x}\in\Gamma}|h_t({\bm x})|^2\sum_{{\bm a},{\bm b}\in [2n]^n}^*\sum_{{\bm i}, {\bm j}}^* \Psi({\bm x})\left(\prod_{i=1}^r a_{i_r j_r}^\mathcal{S}(t)\right)\le -C_1(n)\norm{h_t}_2^2+C(n) N^\xi K^{n-1}.
\end{equation}

Then, using~\eqref{eq:boundderl2} together with~\eqref{eq:addb}, we conclude that
\begin{equation}\label{eq:halfwatro}
\begin{split}
&\partial_t\norm{h_t}_2^2\le -\frac{C_1(n)}{\eta}\norm{h_t}_2^2+\frac{C(n)N^\xi K^{n-1}}{\eta} \\
&\quad +\frac{C_2(n)}{\eta}\sum_{{\bm x}\in \Gamma}|h_t({\bm x})|\sum_{{\bm a}, {\bm b}\in [2n]^n}^*\sum_{{\bm i}}^*\Psi({\bm x})\Bigg|\sum_{{\bm j}}^* \left(\prod_{r=1}^n a_{i_r j_r}^\mathcal{S}(t)\right) h_t({\bm x}_{{\bm a}{\bm b}}^{{\bm i} {\bm j}})\Bigg|
\end{split}
\end{equation}
for some constants \(C_1(n), C_2(n)>0\), on the event \(\Omega_\xi\) with \(\xi>0\) arbitrarily small. In order to conclude the bound of \(\partial_t\norm{h_t}_2^2\) we are now left with the estimate of the last line in~\eqref{eq:halfwatro}.

In the remainder of the proof we will show that
\begin{equation}\label{eq:remaingoal}
\begin{split}
 &\frac{C_2(n)}{\eta}\sum_{{\bm x}\in \Gamma}|h_t({\bm x})|\sum_{{\bm a} , {\bm b}\in [2n]^n}^*\sum_{{\bm i}}^*\Psi({\bm x})\Bigg|\sum_{{\bm j}}^* \left(\prod_{r=1}^n a_{i_r j_r}^\mathcal{S}(t)\right) h_t({\bm x}_{{\bm a}{\bm b}}^{{\bm i} {\bm j}})\Bigg| \\
&\qquad\quad\le \frac{C_1(n)}{2\eta}\norm{h_t}_2^2+ \frac{C_3(n)}{\eta} \mathcal{E}^2K^n,
\end{split}
\end{equation}
with \(\mathcal{E}\) defined in~\eqref{eq:basimpr} and \(C_1(n)\) being the constant from the first line of~\eqref{eq:halfwatro}. Note that using~\eqref{eq:remaingoal} we readily conclude the proof of~\eqref{eq:l2b} by
\begin{equation}
\partial_t\norm{h_t}_2^2\le -\frac{C_1(n)}{2\eta}\norm{h_t}_2^2+\frac{C_3(n)}{\eta}\mathcal{E}^2K^n,
\end{equation}
which implies \(\norm{h_{T_1}}_2^2\le C(n) \mathcal{E}^2 K^n\), by a simple Gronwall inequality, using that \(T_1\gg \eta\).

We now conclude the proof of~\eqref{eq:l2b} proving the bound in~\eqref{eq:remaingoal}. We start with the analysis of
\begin{equation}\label{eq:interest}
\sum_{{\bm j}\in [N]^n}^* \left(\prod_{r=1}^n a_{i_r j_r}^\mathcal{S} (t)\right) h_t({\bm x}_{{\bm a}{\bm b}}^{{\bm i} {\bm j}})
\end{equation}
for any fixed \({\bm x}\in\Gamma\), \({\bm i}\in [N]^n\), \({\bm a}, {\bm b}\in [2n]^n\) with all distinct coordinates such that \(\Psi({\bm x})\ne 0\). 
It will be very important that the configuration \(\phi( {\bm x}_{{\bm a}{\bm b}}^{{\bm i} {\bm j}} )\) contains
exactly one particle at every index \(j_r\), i.e.\ we have
\begin{equation}\label{one}
\prod_{l=1}^N (n_l({\bm x}_{{\bm a}{\bm b}}^{{\bm i}{\bm j}})-1)!!=1.
\end{equation}

Similarly to~\cite[Eqs. (5.89)--(5.91), Eqs. (5.95)--(5.97)]{2005.08425}, using that the function \(f({\bm x})\equiv \bm1(n \,\, \mathrm{even})\) is in the kernel of \(\mathcal{S}(t)\), for any fixed \({\bm x}\in\Gamma\), and for any fixed \({\bm i}\), \({\bm a}\), \({\bm b}\) we conclude that
\begin{equation}\label{eq:fundrelicon}
\begin{split}
&h_t({\bm x}_{{\bm a}{\bm b}}^{{\bm i} {\bm j}})\\
&=\mathcal{U}_\mathcal{S}(0,t)\big((\Av g_0)({\bm x}_{{\bm a}{\bm b}}^{{\bm i}{\bm j}})-(\Av\bm1(n \,\, \mathrm{even}))({\bm x}_{{\bm a}{\bm b}}^{{\bm i}{\bm j}})\big) \\
&=\Av({\bm x}_{{\bm a}{\bm b}}^{{\bm i}{\bm j}})\big(\mathcal{U}_\mathcal{S}(0,t)g_0({\bm x}_{{\bm a}{\bm b}}^{{\bm i}{\bm j}})-\bm1(n\,\, \mathrm{even})\big)+\mathcal{O}\left(\frac{N^{\epsilon+n\xi} \ell}{K}\right) \\
&=\left(\Av({\bm x})+\mathcal{O}\left(\frac{\ell}{K}\right)\right)\left(\mathcal{U}(0,t)g_0({\bm x}_{{\bm a}{\bm b}}^{{\bm i}{\bm j}})-\bm1(n\,\, \mathrm{even})+\mathcal{O}\left(\frac{N^{1+n\xi}t}{\ell}\right)\right)+\mathcal{O}\left(\frac{N^{\epsilon+n\xi} \ell}{K}\right) \\
&=\Av({\bm x})\big(g_t({\bm x}_{{\bm a}{\bm b}}^{{\bm i}{\bm j}})\big)-\bm1(n\,\, \mathrm{even})\big)+\mathcal{O}\left(\frac{N^{\epsilon+n\xi} \ell}{K}+\frac{N^{1+n\xi}t}{\ell}\right),
\end{split}
\end{equation}
where the error terms are uniform in \({\bm x}\in\Gamma\). Note that to go from the first to the second line in
~\eqref{eq:fundrelicon} we used Lemma~\ref{lem:exchav}, to go from the second to the third line we used Lemma~\ref{lem:shortlongapprox} together with the a priori bound \(\norm{g_t}_\infty \le N^{n\xi}\) for any \(0\le t\le T\)
 on the very high probability event \(\widehat{\Omega}_{\omega,\xi}\), and that
\[
|\Av({\bm x})-\Av({\bm x}_{{\bm a}{\bm b}}^{{\bm i}{\bm j}})|\le \frac{1}{K}\norm{{\bm x}-{\bm x}_{{\bm a}{\bm b}}^{{\bm i}{\bm j}}}_1\le \frac{2n\ell}{K},
\]
where \(\| {\bf x}\|_1= \sum_{c=1}^{2n} |x_c|\). To go from the third to the fourth line in~\eqref{eq:fundrelicon}
we used that \(|\Av({\bm x})|\le 1\) and again that \(\norm{g_t}_\infty \le N^{n\xi}\).
Then, from~\eqref{eq:fundrelicon}, we conclude that
\begin{equation}\label{eq:medgoal}
\begin{split}
 \sum_{{\bm j}}^* \left(\prod_{r=1}^n a_{i_r j_r}^\mathcal{S}(t)\right) h_t({\bm x}_{{\bm a}{\bm b}}^{{\bm i} {\bm j}})&=\Av({\bm x})\sum_{{\bm j}}^* \left(\prod_{r=1}^n a_{i_r j_r}^\mathcal{S}(t)\right) \big(g_t({\bm x}_{{\bm a},{\bm b}}^{{\bm i} {\bm j}})-\bm1(n\,\, \mathrm{even})\big) \\
&\quad +\mathcal{O}\left(\frac{N^{\epsilon+n\xi}\ell}{K}+\frac{N^{1+n\xi}T_1}{\ell}\right).
\end{split}
\end{equation}
From now on we will omit the \(\Av({\bm x})\) prefactor
 in~\eqref{eq:medgoal}, since \(|\Av({\bm x})|\le 1\).

Using the definition of \(g_t\) from~\eqref{eq:defg}
and~\eqref{eq:deff}, for any \({\bm x}\in\Gamma\) such that \(\Psi({\bm x})\ne 0\), and for any fixed \({\bm i}\), \({\bm a}, {\bm b}\), dropping the \(t\)-dependence of the eigenvalues \(\lambda_i=\lambda_i(t)\), we have 
\begin{equation}\label{eq:leadorcancel1}
\begin{split}
&\sum_{{\bm j}}^*\left(\prod_{r=1}^n a_{i_r j_r}^\mathcal{S}(t)\right)\big(g_t({\bm x}_{{\bm a}{\bm b}}^{{\bm i} {\bm j}})-\bm1(n\,\, \mathrm{even})\big) \\
&=\sum_{{\bm j}}^*\left(\prod_{r=1}^n a_{i_r j_r}(t)\right)\left(\frac{N^{n/2}}{\braket{A^2}^{n/2} 2^{n/2}(n-1)!!}\sum_{G\in \mathcal{G}_{{\bm \eta}^{{\bm j}}}}P(G)-\bm1(n\,\, \mathrm{even})\right)+\mathcal{O}\left(\frac{N^{1+n\xi}\eta}{\ell}\right) \\
&=\sum_{{\bm j}}\left(\prod_{r=1}^n a_{i_r j_r}(t)\right)\left(\frac{N^{n/2}}{\braket{A^2}^{n/2} 2^{n/2}(n-1)!!}\sum_{G\in \mathcal{G}_{{\bm \eta}^{{\bm j}}}} P(G)-\bm1(n\,\, \mathrm{even})\right)+\mathcal{O}\left(\frac{N^{n\xi}}{N\eta}+\frac{N^{1+n\xi}\eta}{\ell}\right).
\end{split}
\end{equation}  
Note that in~\eqref{eq:leadorcancel1} we used the notation \({\bm \eta}^{{\bm j}}:=\phi({\bm x}_{{\bm a}{\bm b}}^{{\bm i} {\bm j}})\) to denote the particle configuration which has exactly one particle at each site \(\{j_1,\dots, j_n\}\). Note that in the last line of~\eqref{eq:leadorcancel1} we do not exclude the possibility that two indices \(j\) may assume the same value, since the sum is unrestricted. In the second and third lines of~\eqref{eq:leadorcancel1} we simply omitted the conditional expectation \(\E[\cdots|{\bm \lambda}]\) to shorten the formulas. Since all subsequent 
estimates hold with  high probability, the conditional expectation does not
play a role. When going from the first to the second line of~\eqref{eq:leadorcancel1} we removed
the short range restriction, as in~\eqref{eq:longreg}, by adding back the summations over the regimes \(|j_r-i_r|> \ell\), and we also used~\eqref{one}
since the coordinates of \({\bm j}\) are all distinct, and so that 
\({\mathcal M}({\bm \eta}^{{\bm j}})=1\) in the definition of \(g_t\) in~\eqref{eq:defg} and~\eqref{eq:deff}. Additionally, the error term in the third line of~\eqref{eq:leadorcancel1} comes from adding back the missing \(j_r\)-summations; in this bound we used the a priori bound  \(|P(G)|\le N^{n\xi-n/2}\) on the very high probability event \(\widehat{\Omega}_{\omega,\xi}\) and~\eqref{eq:singlegllaw}.

  We now use the definition of \(P(G)\) in~\eqref{eq:defpg} on the right hand side of~\eqref{eq:leadorcancel1}. Since every particle is doubled we may rewrite the sum over perfect matchings as 
  \begin{equation}\label{eq PG sum}
    \sum_{G\in\cG_{\bm\eta^{\bm j}}} P(G) = \sum_{G\in \Gr_2[n]} \prod_{(v_1\cdots v_k)\in\Cyc(G)} (2k-2)!! p_{j_{v_1}j_{v_2}}\cdots p_{j_{v_k}j_{v_1}},
  \end{equation} 
  where \(\Gr_2[n]\) denotes the set of 2-regular multi-graphs (possibly with loop-edges) on \([n]\) and \(\Cyc(G)\) denoting the collection of cycles in any such graph \(G\in \Gr_2[n]\). The combinatorial factor \((2k-2)!!\) is due to the fact that for each cycle in \(G\) there are \((2k-2)!!\) equivalent perfect matchings giving the very same cyclic monomial. For example, for \(n=2\) there are two 2-regular multi-graphs, \((11),(22)\) and \((12),(12)\) and thus \(\sum_{G\in\cG_{\bm\eta^{\bm j}}}P(G)=2p_{j_1j_2}^2 + p_{j_1j_1}p_{j_2j_2}\). Similarly, for \(n=3\) there are the graphs \[\set{(11),(22),(33)},\set{(12),(12),(33)},\set{(13),(13),(22)},\set{(23),(23),(11)},\set{(12),(23),(13)}\] yielding 
  \[\sum_{G\in\cG_{\bm\eta^{\bm j}}}P(G)=p_{j_1j_1}p_{j_2j_2}p_{j_3j_3} + 2p_{j_1j_1}p_{j_2j_3}^2 + 2p_{j_2j_2}p_{j_1j_3}^2 + 2p_{j_3j_3}p_{j_1j_2}^2 + 8p_{j_1j_2}p_{j_2j_3}p_{j_1j_3}.\]
  For each graph \(G\in\Gr_2[n]\) we may use the spectral theorem to perform the \(\bm j\) summation as 
  \begin{equation}\label{j performed sum}
    \sum_{j_{v_1},\ldots,j_{v_k}} \Bigl(\prod_{r\in[k]}a_{i_{v_r}j_{v_r}}(t)\Bigr) p_{j_{v_1}j_{v_2}}\cdots p_{j_{v_k}j_{v_1}} = N^{1-k}F_k(v_1,\ldots,v_k)
  \end{equation}
  with 
  \[F_k(v_1,\ldots,v_k):=\braket{\Im G_t(\lambda_{i_{v_1}}+\ii\eta)A\cdots \Im G_t(\lambda_{i_{v_k}}+\ii\eta)A}.\]
  Since each vertex appears in exactly one cycle, we can use~\eqref{j performed sum} to perform the summation for the indices corresponding to any cycle separately and obtain
  \begin{equation}\label{eq P(G) F}
    \begin{split}
      \sum_{{\bm j}}\left(\prod_{r=1}^n a_{i_r j_r}(t)\right)\sum_{G\in \mathcal{G}_{{\bm \eta}^{{\bm j}}}} P(G) =  \sum_{E\in \Gr_2[n]} \prod_{(v_1\cdots v_k)\in\Cyc(E)} (2k-2)!!N^{1-k} F_k(v_1,\ldots, v_k).
    \end{split}
  \end{equation}
  We note that from~\eqref{eq:hatomega} for each \(k\ge 1\) we have the estimate
  \begin{equation}\label{F est}
    F_k(v_1,\ldots v_k) = \bm 1(k=2) \braket{A^2}\Im m(z_{i_{v_1}})\Im m(z_{i_{v_k}}) + \landauO*{N^\xi \frac{N^{k/2-1}}{\sqrt{N\eta}}}
  \end{equation}
  on the high-probability set \(\wh\Omega\). By using~\eqref{F est} within~\eqref{eq P(G) F} and using the fact that there are \(\bm1(n\text{ even})(n-1)!!\) graphs in \(\Gr_2[n]\) all of which cycles have length two, it follows that 
  \begin{equation}
   ~\eqref{eq P(G) F} = \bm1(n\text{ even}) (n-1)!! 2^{n/2} N^{-n/2} \braket{A^2}^{n/2} \prod_{r\in[n]}\Im m(z_{i_r}) + \landauO*{N^\xi\frac{N^{-n/2}}{\sqrt{N\eta}}}
  \end{equation}
  and from~\eqref{eq:leadorcancel1} we conclude 
  \begin{equation}\label{eq:finbound}
    \Psi({\bm x})\sum_{{\bm j}}^*\left(\prod_{r=1}^n a_{i_r j_r}^\mathcal{S}(t)\right)\big(g_t({\bm x}_{{\bm a}{\bm b}}^{{\bm i} {\bm j}})-\bm1(n\,\, \mathrm{even})\big) = \Psi({\bm x})\mathcal{O}\left(\frac{N^\xi}{\sqrt{N\eta}}+\frac{N^\xi}{N\eta}+\frac{N^{1+\xi}\eta}{\ell}\right).
  \end{equation}
  We remark that in estimating the error term we used that $\braket{A^2}\ge \delta'$.

Combining~\eqref{eq:medgoal} and~\eqref{eq:finbound}, we get that
\begin{equation}\label{eq:almthere}
\Psi({\bm x})\left|\sum_{{\bm j}}^* \left(\prod_{r=1}^n a_{i_r j_r}^\mathcal{S}(t)\right) h_t({\bm x}_{{\bm a}{\bm b}}^{{\bm i} {\bm j}})\right| \le C(n) \Psi({\bm x})\mathcal{E}
\end{equation}
and finally, by~\eqref{eq:almthere}, we conclude that
\begin{equation}\label{eq:gronw}
\mathrm{l.h.s.}~\eqref{eq:remaingoal}\le \frac{C_1(n)}{2\eta}\norm{h_t}_2^2+ \frac{C_3(n)}{\eta} \mathcal{E}^2K^n,
\end{equation}
where we used that for any fixed \({\bm x}\in\Lambda^n\) we have
\[
\sum_{{\bm a}, {\bm b}\in [2n]^n}^*\sum_{{\bm i}}^*\Psi_{{\bm i},{\bm a},{\bm b}}({\bm x})\le C(n),
\]
and that
\[
\left|\sum_{{\bm x}\in \Gamma} h_t({\bm x})\mathcal{E}\right|\le \frac{C_1(n)}{2} \sum_{{\bm x}\in \Gamma} \pi({\bm x}) |h_t({\bm x})|^2+C_3(n)\mathcal{E}^2K^n,
\]
by the Schwarz inequality,  the bound \(1\le  \pi({\bm x})\)  from~\eqref{eq:boundsrevmeasure}, and \(\sum_{{\bm x}\in \Gamma} \pi({\bm x})\le C(n) K^n\).
Note that by balancing between the two terms
in the Schwarz inequality we could achieve the same constant \(C_1(n)\)   with an additional 1/2 factor
in front of the \(\norm{h_t}_2^2\) term as in the leading term in~\eqref{eq:halfwatro} with a minus sign. This concludes the proof of the bound in~\eqref{eq:remaingoal}.
\end{proof}

\begin{proof}[Proof of Lemma~\ref{lem:replacement}]

All along the proof \(C(n)>0\) is a constant that depends only on \(n\) and that may change from line to line.

We consider
\begin{equation}\label{eq:squaform}
\begin{split}
\braket{h, \mathcal{S} (t)h}_{\Lambda^n,\pi}&=-\frac{1}{2}\sum_{{\bm x}\in \Lambda^n}\pi({\bm x})\sum_{j\ne i} c_{ij}^{\mathcal{S}}(t)\frac{n_j({\bm x})+1}{n_i({\bm x})-1}\sum_{a\ne b\in [2n]}\big|h({\bm x}_{ab}^{i j})-h({\bm x})\big|^2 \\
&\le -\frac{C(n)}{\eta}\sum_{{\bm x}\in \Lambda^n}\sum_{j\ne i}a_{ij}^{\mathcal{S}} (t)\sum_{a\ne b\in [2n]}\big|h({\bm x}_{ab}^{ij})-h({\bm x})\big|^2
\end{split}
\end{equation}
and
\begin{equation}\label{eq:aquaform}
\braket{h, \mathcal{A} (t)h}_{\Lambda^n,\mu}=-\frac{1}{2\eta}\sum_{{\bm x}\in \Lambda^n}\sum_{{\bm i}, {\bm j}}^* \left(\prod_{r=1}^n a_{i_r j_r}^\mathcal{S}(t)\right)\sum_{{\bm a},{\bm b}\in [2n]^n}^*\big|h({\bm x}_{{\bm a} {\bm b}}^{{\bm i} {\bm j}})-h({\bm x})\big|^2.
\end{equation}
Note that in~\eqref{eq:squaform} we used that \(a_{ij}^{\mathcal{S}}(t)\le \eta c_{ij}^{\mathcal{S}}(t)\) to compare the kernels, that \(\pi ({\bm x})\ge 1\) uniformly in \({\bm x}\in\Lambda^n\) and finally 
that \(n_j({\bm x})+1\ge 1\), \(1\le n_i({\bm x})-1\le n\) for \({\bm x}\) and \(i\) such that  \(h({\bm x}_{ab}^{ij})\ne h({\bm x})\).

We start with the bound
\begin{equation}\label{eq:tel}
\begin{split}
&\sum_{{\bm x}\in \Lambda^n}\sum_{{\bm i}, {\bm j}\in [N]^n}^* \left(\prod_{r=1}^n a_{i_r j_r}^\mathcal{S}(t)\bm1(n_{i_r}({\bm x})>0)\right)\sum_{{\bm a},{\bm b}\in [2n]^n}^*\big|h({\bm x}_{{\bm a}{\bm b}}^{{\bm i} {\bm j}})-h({\bm x})\big|^2 \\
&\quad\le C(n)\sum_{{\bm x}\in \Lambda^n}\sum_{{\bm i} ,{\bm j}}^* \left(\prod_{r=1}^n a_{i_r j_r}^\mathcal{S}(t)\bm1(n_{i_r}({\bm x})>0)
\right)\sum_{l=1}^n\sum_{{\bm a},{\bm b}\in [2n]^n}^*\big|h(({\bm y}_{l-1})_{a_l b_l}^{i_l j_l})-h({\bm y}_{l-1})\big|^2,
\end{split}
\end{equation}
where we recursively defined 
\({\bm y}_0={\bm x}, {\bm y}_1, {\bm y}_2 \ldots, {\bm y}_n={\bm x}_{{\bm a}{\bm b}}^{{\bm i}{\bm j}}\)
by performing  the jumps \(i_1\to j_1\), \(i_2\to j_2\), etc.,  one by one (assuming that the choice of \((a_l,b_l)\) allows it, otherwise \({\bm y}_l={\bm y}_{l-1}\):
\begin{equation}
{\bm y}_0={\bm y}_0({\bm x}):={\bm x}, \qquad  {\bm y}_{l}={\bm y}_{l}({\bm x}):=({\bm y}_{l-1})_{a_l b_l}^{i_l j_l}.
\end{equation}
In the first line of~\eqref{eq:tel} we could add the indicator \(\bm1(n_{i_r}(\bm x)>0)\) since in case \(n_{i_r}(\bm x)=0\) for some \(r\) it holds that \(\bm x_{\bm a \bm b}^{\bm i \bm j}=\bm x\). 
Note that to go from the first to the second line of~\eqref{eq:tel} we wrote a telescopic sum
\[
h({\bm x}_{{\bm a}{\bm b}}^{{\bm i}{\bm j}})-h({\bm x})=\sum_{l=1}^n \big[ h(({\bm y}_{l-1})_{a_l b_l}^{i_l j_l})-h({\bm y}_{l-1})\big],
\]
and used Schwarz inequality.

Next we consider
\begin{equation}\label{eq:finsec}
\begin{split}
&\sum_{l=1}^n\sum_{{\bm x}\in \Lambda^n}\sum_{{\bm i}, {\bm j}}^*\sum_{{\bm a},{\bm b}\in [2n]^n}^* \left(\prod_{r=1}^n a_{i_r j_r}^\mathcal{S}(t)\bm1(n_{i_r}({\bm x})>0)
\right)\big|h(({\bm y}_{l-1})_{a_l b_l}^{i_l j_l})-h({\bm y}_{l-1})\big|^2 \\
&\qquad=\sum_{l=1}^n\sum_{{\bm w}\in \Lambda^n}\sum_{{\bm i}, {\bm j}}^* \sum_{{\bm a},{\bm b}\in [2n]^n}^*\left(\prod_{r=1}^n a_{i_r j_r}^\mathcal{S}(t)\bm1(n_{i_r}({\bm z}_{l-1})>0) \right)\big|h({\bm w}_{a_l b_l}^{i_l j_l})-h({\bm w})\big|^2 \\
&\qquad\le C(n) \sum_{{\bm w}\in \Lambda^n}\sum_{l=1}^n \sum_{i_l\ne j_l}a_{i_l j_l}^{\mathcal{S}}(t)\sum_{a_l\ne b_l\in [2n]}\big|h({\bm w}_{a_l b_l}^{i_l j_l})-h({\bm w})\big|^2 \\
&\qquad\qquad\quad \times \left(\prod_{r\ne l} \sum_{i_r, j_r} a_{i_r j_r}\big[\bm1(n_{i_r}({\bm w})>0)+\bm1(n_{j_r}({\bm w})>0)\big]\right) \\
&\qquad\le C(n) \sum_{{\bm w}\in \Lambda^n} \sum_{l=1}^n\sum_{i_l\ne j_l}a_{i_l j_l}^{\mathcal{S}}(t) \sum_{a_l\ne b_l\in [2n]}\big|h({\bm w}_{a_l b_l}^{i_l j_l})-h({\bm w})\big|^2 \\
&\qquad\le C(n) \sum_{{\bm w}\in \Lambda^n}\sum_{i\ne j}a_{i j}^{\mathcal{S}}(t) \sum_{a\ne b\in [2n]}\big|h({\bm w}_{a b}^{i j})-h({\bm w})\big|^2.
\end{split}
\end{equation}
Note that to go from the first to the second line we did the change of variables \({\bm w}= {\bm y}_{l-1}({\bm x})\), we used that \(({\bm x}_{a_lb_l}^{i_lj_l})_{a_lb_l}^{j_l i_l}={\bm x}\) for any \({\bm x}\in \Lambda^n\) such that \(\prod_r \bm1(n_{i_r}({\bm x})>0)\), and we defined \({\bm z}_{l-1}=(({\bm w}_{a_{l-1} b_{l-1}}^{j_{l-1} i_{l-1}})\dots)_{a_1 b_1}^{j_1 i_1}\). Moreover, to go from the second to the third line in~\eqref{eq:finsec} we used that
\begin{equation}
\begin{split}
    \prod_{r\in[n]\setminus\set{l}}\bm1(n_{i_r}({\bm z}_{l-1})>0)&\le C(n) \left(\prod_{r=1}^{l-1}\Big[\bm1(n_{j_r}({\bm w})>0)+\bm1(n_{i_r}({\bm w})>0)\Big] \right)\\
    &\qquad\qquad\times\left(\prod_{r=l+1}^n \bm1(n_{i_r}({\bm w})>0)\right)
\end{split}
\end{equation}
for \(i_1,\dots i_n, j_1,\dots, j_n\) all distinct, which follows by \(n_{i_r}({\bm z}_{l-1})=n_{i_r}({\bm w})\) if \(r \ge l+1\) and
\[
\bm1(n_{i_r}({\bm z}_{l-1})>0)\le \bm1(n_{i_r}({\bm w})>0)+\bm1(n_{j_r}({\bm w})>0)
\]
for \(r\le l-1\). In the penultimate inequality in~\eqref{eq:finsec} we also used that
\begin{equation}
\prod_{r\ne l} \sum_{i_r j_r} a_{i_r j_r}\big[\bm1(n_{i_r}({\bm w})>0)+\bm1(n_{j_r}({\bm w})>0)\big]\le C(n),
\end{equation}
on the very high probability event \(\widehat{\Omega}\).
Combining~\eqref{eq:squaform}--\eqref{eq:aquaform},~\eqref{eq:tel} and~\eqref{eq:finsec}, we finally conclude~\eqref{eq:fundbound}.
\end{proof}

\subsection{Proof of Proposition~\ref{pro:flucque}}

Fix \(1\ll NT_1\ll \ell_1\ll K\) and \(1\ll NT_2\ll \ell_2 \ll K\), 
with \(T_1\le T_2/2\). Define the \emph{lattice generator} \(\mathcal{W}(t)\) by
\begin{equation}
\mathcal{W}(t):=\sum_{i\ne j\in [N]} \mathcal{W}_{ij}(t), \qquad \mathcal{W}_{ij}(t):=c_{ij}^{\mathcal{W}}(t)\frac{n_j({\bm x})+1}{n_i({\bm x})-1}\sum_{a\ne b\in [2n]}\big(h({\bm x}_{ab}^{ij})-h({\bm x})\big),
\end{equation}
with
\begin{equation}
c_{ij}^{\mathcal{W}}(t):=\begin{cases}
c_{ij}(t) &\mathrm{if}\, i,j\in\mathcal{J}\,\, \mathrm{and} \,  1\le |i-j|\le \ell_2 \\
\frac{N}{|i-j|^2} &\mathrm{otherwise}.
\end{cases}
\end{equation}
Denote by \(\mathcal{U}_\mathcal{W}(s,t)\) the semigroup associated to the generator \(\mathcal{W}(t)\). Note that \(\mathcal{W}(t)\) is the original generator of the Dyson eigenvector flow \(\mathcal{L}\)
from~\eqref{eq:g1dker} on short scales and in the interval \(\mathcal{J}\) well inside the bulk,
 while on large scales it has an  equidistant jump rate. In~\cite{2005.08425} this replacement 
 made up for the missing rigidity (regularity) control  of the eigenvalues outside of a local interval \(\mathcal{J}\);
 in our case its role is just to handle the somewhat different scaling of the eigenvalues near the edges. We follow the setup of~\cite{2005.08425} for convenience.

On the event \(\Omega_\xi\) the coefficients \(c_{ij}^{\mathcal{W}}(t)\) satisfy~\cite[Assumption 6.8]{2005.08425} with a rate \(v=N^{1-\xi}\), for any arbitrary small \(\xi>0\), hence all the results in~\cite[Section 6]{2005.08425} apply to the generator \(\mathcal{W}(t)\).
Most importantly, the Dirichlet form of \(\mathcal{W}(t)\) satisfies a Poincar\'e inequality and, consequently 
 we have an \(L^2\to L^\infty\) ultracontractive  decay  bound for the corresponding semigroup. 
 Their scaling properties confirm the intuition that \(\mathcal{W}(t)\) is a discrete analogue of the \(|p|= \sqrt{-\Delta}\)
 operator in \(\R^{2n}\).  In the continuous setting, standard Sobolev inequality combined with the Nash method implies that
 \begin{equation}
 \label{nash}
    \| e^{-t|p|} f \|_{L^\infty(\R^{2n})} \le \frac{C(n)}{t^{n/2}}\| f\|_{L^2(\R^{2n})}
 \end{equation}
 holds for any \(L^2\) function on \(\R^{2n}\). The same decay holds for the semigroup generated by \(\mathcal{W}(t)\)
 by~\cite[Proposition 6.29]{2005.08425} (recall that~\cite{2005.08425} uses \(n\) to denote
 the dimension of the space of \({\bm x}\)'s, we use  \(2n\)).     We remark
 that the proofs in~\cite[Section 6]{2005.08425} are designed for the more involved \emph{coloured} dynamics;  here we need
 only its  simpler \emph{colourblind} version which immediately follows from the coloured version by ignoring the colors. In particular, in our case the \emph{exchange operator} \(\mathcal{E}_{ij}\) is identically zero. While a direct proof of the colourblind version is possible and it would require less combinatorial complexity, for brevity, we  directly use the results of~\cite[Section 6]{2005.08425}.

For each \({\bm y}\) supported on \(\mathcal{J}\), let \(q_t({\bm x})=q_t({\bm x};{\bm y})\) be the solution of
\begin{equation}\label{eq:q}
\begin{cases}
q_0({\bm x})=\Av({\bm x};K,{\bm y})\big(g_0({\bm x})-\bm1(n\,\, \mathrm{even})\big) \\
\partial_tq_t({\bm x})=\mathcal{S}(t)q_t({\bm x}) &\mathrm{for}\,\, 0< t\le T_1 \\
\partial_tq_t({\bm x})=\mathcal{W}(t)q_t({\bm x}) &\mathrm{for}\,\, T_1< t\le T_2,
\end{cases} 
\end{equation}
with \(\mathcal{S}(t)\) being the short-range generator on a scale \(\ell=\ell_1\) from~\eqref{g-2}. Note that \(q_t=h_t\) for any \(0\le t\le T_1\), with \(h_t\) being the solution of~\eqref{g-1}. 

By Proposition~\ref{prop:mainimprov}, choosing \(\eta=N^{-\epsilon} T_1\), we have
\begin{equation}\label{eq:boundt1}
\sup_{{\bm y}: y_a\in \mathcal{J}}\norm{q_{T_1}(\cdot;{\bm y})}_2\lesssim N^\epsilon K^{n/2}\left(\frac{\ell_1}{K}+\frac{NT_1}{\ell_1}+\frac{1}{\sqrt{NT_1}}+\frac{1}{\sqrt{K}}\right),
\end{equation}
for any arbitrary small \(\epsilon>0\), where the supremum is over all the \({\bm y}\) supported on \(\mathcal{J}\). We recall that by the finite speed of propagation estimate in Proposition~\ref{pro:finitespeed}, together with~\cite[Eq. (7.12)]{2005.08425}, the function \(q_t\) is supported on the subset of \(\Gamma\subset\Lambda^n\) such that \(d({\bm x},{\bm y})\le 3K\) for any \({\bm x}\in\Gamma\) (modulo a negligible exponentially small error term). Then, using the ultracontractivity bound for the dynamics of \(\mathcal{W}(t)\) from~\cite[Proposition 6.29]{2005.08425}, with \(v=N^{1-\xi}\), we get that
\begin{equation}\label{eq:boundt2}
\begin{split}
\sup_{{\bm y}}\norm{q_{T_2}(\cdot;{\bm y})}_\infty&=\sup_{{\bm y}}\norm{\mathcal{U}_\mathcal{W}(T_1,T_2)q_{T_1}(\cdot;{\bm y})}_\infty\\
&\le \sup_{{\bm y}}\norm{(1-\Pi)\mathcal{U}_\mathcal{W}(T_1,T_2)q_{T_1}(\cdot;{\bm y})}_\infty+\sup_{{\bm y}}\norm{\Pi\mathcal{U}_\mathcal{W}(T_1,T_2)q_{T_1}(\cdot;{\bm y})}_\infty \\
&\lesssim \frac{N^{n\xi}}{[N(T_2-T_1)]^{n/2}}\sup_{{\bm y}}\norm{q_{T_1}(\cdot;{\bm y})}_2+\frac{K^n}{N^{n (1-\xi)}},
\end{split}
\end{equation}
where \(\Pi\) is the orthogonal projection into the kernel of \(\mathcal{L}\), \(\ker(\mathcal{L})=\bigcap_{i\ne j}\ker(\mathcal{L}_{ij})\), defined in~\cite[Lemma 4.17]{2005.08425}. Note that in~\eqref{eq:boundt2} we used that by~\cite[Corollary 4.20]{2005.08425} it holds
\[
\norm{\Pi q_{T_2}}_{\infty}\le C(n) N^{-n} \norm{q_{T_2}}_1\le K^n N^{-n+n\xi},
\]
since \(\norm{q_{T_2}}_\infty\le N^{n\xi}\) on the very high probability set \(\widehat{\Omega}\). We remark that in~\cite[Proposition 6.29]{2005.08425} \(\mathcal{U}_\mathcal{W}\) is replaced by \(\mathcal{U}\), but this does not play any role since the only assumption on \(\mathcal{L}_{ij}\) used in~\cite[Section 6]{2005.08425} is that \(c_{ij}(t)\ge N^{1-\xi}|i-j|^{-2}\) (see~\cite[Definition 6.8]{2005.08425}). Combining~\eqref{eq:boundt1}--\eqref{eq:boundt2} we conclude
\begin{equation}\label{eq:boundcomb}
\sup_{{\bm y}}\norm{q_{T_2}(\cdot;{\bm y})}_\infty \lesssim N^{2\epsilon} \left(\frac{K}{NT_2}\right)^{n/2}\left(\frac{\ell_1}{K}+\frac{NT_1}{\ell_1}+\frac{1}{\sqrt{NT_1}}+\frac{1}{\sqrt{K}}\right),
\end{equation}
where we used that \(T_1\le T_2/2\).

Now we compare the solution \(q_t\) from~\eqref{eq:q} with the original dynamics \(g_t\) from~\eqref{eq:g1deq}.
This is done, after several steps, using
~\cite[Proposition 7.2]{2005.08425} with \(F_t({\bm y};{\bm y})\) replaced by \(\bm1(n\,\, \mathrm{even})\), asserting that
\begin{equation}\label{eq:approxincon}
\sup_{{\bm y}}\big|q_{T_2}({\bm y}; {\bm y})-(g_{T_2}({\bm y})-\bm1(n\,\, \mathrm{even}))\big|\lesssim N^\epsilon\left(\frac{\ell_1}{K}+\frac{NT_1}{\ell_1}+\frac{\ell_2}{K}+\frac{NT_2}{\ell_2}\right).
\end{equation}
In particular, the only thing used about \(F_t({\bm y};{\bm y})\) in the proof of~\cite[Proposition 7.2]{2005.08425} is that \(F_t\) is in the kernel of all  \(\mathcal{L}_{ij}\), and this is clearly the case for \(\bm1(n\,\, \mathrm{even})\) as well. The origins of the error terms in~\eqref{eq:approxincon} are as follows. The smooth cutoff given by the \(\mbox{Av}\) localising operator in the initial condition~\eqref{eq:q} commutes with  the
time evolution generated by \(\mathcal{S}\) up an error of order \(\ell_1/K\), see Lemma~\ref{lem:exchav}.
The difference between the original dynamics and the short range dynamics 
in the time interval \(t\in [0, T_1]\) yields the error \(NT_1/\ell_1\),
see Lemma~\ref{lem:shortlongapprox}. Similar errors hold for the approximation 
of the original dynamics by the  time evolution generated by \(\mathcal{W}\)  on the time interval \(t\in [T_1, T_2]\),
giving rise to the errors \(\ell_2/K\) and \(N(T_2-T_1)/\ell_2\le NT_2/\ell_2\).

Combining~\eqref{eq:boundcomb}--\eqref{eq:approxincon}, we conclude that
\begin{equation}\label{eq:finbfin}
\begin{split}
&\sup_{{\bm y}}\big|g_{T_2}({\bm y})-\bm1(n\,\, \mathrm{even})\big| \\
&\lesssim N^{2\epsilon}\left(\frac{\ell_1}{K}+\frac{NT_1}{\ell_1}+\frac{\ell_2}{K}+\frac{NT_2}{\ell_2}+\left(\frac{K}{NT_2}\right)^{n/2}\left(\frac{\ell_1}{K}+\frac{NT_1}{\ell_1}+\frac{1}{\sqrt{NT_1}}+\frac{1}{\sqrt{K}}\right) \right) \\
&\lesssim N^{-c/(20 n)},
\end{split}
\end{equation}
on the with very high probability event \(\Omega_\xi\cap\widehat\Omega_{\xi, \epsilon}\) 
with choosing a very small \(\xi\). 
In the last step we  optimised the error terms in the second line of~\eqref{eq:finbfin} with the choice of
\[
K=N^c, \qquad  T_2=N^{-1-c/(10 n)} K, \qquad \ell_2=\sqrt{NKT_2}, \qquad \ell_1=\sqrt{NKT_1}, \qquad T_1=\frac{\sqrt{K}}{N},
\]
with some small fixed \(0< c\le 1/2\). Finally, using that
\[
\sup_{{\bm y}: y_a\in\mathcal{J}}\big|g_{T_2}({\bm y})-\bm1(n\,\, \mathrm{even})\big|=\sup_{{\bm \eta}}\big|f_{T_2}({\bm\eta})-\bm1(n\,\, \mathrm{even})\big|
\]
by~\eqref{eq:defg}, where the supremum in the right hand side is taken over configurations \({\bm \eta}\) such that \(\eta_i=0\) for \(i\in [\delta N, (1-\delta) N]^c\) and \(\sum_i \eta_i=n\). The bound in~\eqref{eq:finbfin} concludes the proof of Proposition~\ref{pro:flucque}.

\section{Local law bounds}\label{sec:llaw}
In this section we prove the local laws needed to estimate the probability of the event \(\wh\Omega\) in~\eqref{eq:hatomega}. We recall~\cite{MR2871147} that the resolvent \(G=(W-z)^{-1}\) of the Wigner matrix \(W\) is approximately equal, 
\begin{equation}\label{local law}
  G_{ab}=\delta_{ab}m+\landauO*{\frac{N^{\xi}}{\sqrt{N\Im z}}},\quad \braket{G}=m+\landauO*{\frac{N^{\xi}}{N\Im z}}
\end{equation} to the Stieltjes transform \(m=m_\mathrm{sc}(z)\) of the semicircular distribution \(\rho_\mathrm{sc}=\sqrt{4-x^2}/2\pi\) which solves the equation 
\begin{equation}\label{mde}
  -\frac{1}{m}=m+z.
\end{equation}
\begin{proposition}\label{pro:llaw}
  Let \(k\ge 3\) and \(z_1,\ldots,z_k\in\C\setminus\R\) with \(N\min_i(\rho_i\eta_i)\ge N^\epsilon\) for some \(\epsilon>0\) with \(\eta_i:=\abs{\Im z_i}\) and \(\rho_i:=\rho(z_i)\), \(\rho(z):=\abs{\Im m(z)}/\pi\). Then for arbitrary traceless matrices \(A_1,\ldots,A_k\) with \(\norm{A_i}\lesssim 1\) we have 
  \begin{equation}
  \label{eq:llawbound}
    \abs*{\braket*{G_1A_1\ldots G_k A_k}}\lesssim N^{\xi+(k-3)/2}\sqrt{\frac{\rho^\ast}{\eta_\ast}},
  \end{equation}
  with very high probability for any \(\xi>0\), where \(\rho^\ast:=\max_i \rho_i\) and \(\eta_\ast:=\min\eta_i\). 
\end{proposition}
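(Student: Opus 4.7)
My plan is to prove \eqref{eq:llawbound} by induction on $k$, with the base cases $k=1$ and $k=2$ supplied by the single- and two-resolvent local laws already collected in~\eqref{eq:hatomega}, which are taken from \cite{2012.13215}. The inductive step will combine a Hilbert--Schmidt Cauchy--Schwarz reduction to shorter chains with a cumulant (integration-by-parts) expansion that extracts the crucial gain coming from the tracelessness of each $A_i$.

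For the reduction step, I would split the chain as $\braket{XY}$ with $X=G_1A_1\cdots G_j A_j$ and $Y=G_{j+1}A_{j+1}\cdots G_k A_k$, so that $\abs{\braket{XY}}^2\le\braket{XX^*}\braket{Y^*Y}$. Using cyclicity together with the identity $G_i^*G_i=(\Im G_i)/\eta_i$, each factor reduces to a chain of roughly half the length, but now containing one non-traceless matrix $A_jA_j^*$. Decomposing $A_jA_j^*=\braket{A_jA_j^*}\,I+\mathring{(A_jA_j^*)}$ returns us to the setting of the induction hypothesis. A purely crude Cauchy--Schwarz iteration alone, however, yields only $\prod_i\sqrt{\rho_i/\eta_i}$, which for $k=3$ is of order $(\rho^*/\eta_*)^{3/2}$ rather than the target $\sqrt{\rho^*/\eta_*}$, so an additional gain of a factor $\rho^*/\eta_*$ is needed.

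To extract this missing gain I would start from $(z_1+W)G_1=-I$, multiply by $A_1G_2A_2\cdots G_kA_k$, take the normalized trace, and apply a cumulant expansion (Stein's identity in the Gaussian part; higher-order cumulants truncated using Assumption~\ref{ass:entr} in general) to $\E\braket{WG_1A_1\cdots G_kA_k}$. Each derivative $\partial_{W_{ab}}G_i$ produces two additional resolvents and splits the chain. The algebraic key is that the would-be leading term proportional to $m_1\cdots m_k L_k$ cancels against the $m_1$ coming from the deterministic approximation, while the term $m_1\cdots m_k\braket{A_1\cdots A_k}$ that naively dominates is suppressed because each $A_i$ is traceless. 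What survives consists of strictly shorter chains $L_j$ with $j<k$, controlled by the induction hypothesis and by~\eqref{eq:hatomega}. This is the mechanism responsible for the exponent $(k-3)/2$: relative to the naive estimate, tracelessness effectively shortens the chain by two.

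The main obstacle will be the combinatorial bookkeeping in the cumulant step, namely verifying that after all the cancellations induced by tracelessness the only surviving terms are \emph{strictly} shorter chains, and that the induction closes with the exact exponent $N^{\xi+(k-3)/2}\sqrt{\rho^*/\eta_*}$ rather than something weaker accumulating per step. A secondary technical difficulty is truncating the cumulant series at a sufficiently high (but finite) order, using the moment bound~\eqref{eq:momentass} to control the tail, and handling the non-Gaussian corrections uniformly in the spectral parameters.
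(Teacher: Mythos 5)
Your skeleton (expanding the first resolvent via \((z_1+W)G_1=-I\), i.e.\ the renormalised identity~\eqref{G m eq}, and producing shorter chains plus a renormalised \(W\)-term) coincides with the paper's first step~\eqref{GA eq}, but the mechanism you rely on for the crucial gain is not correct. Tracelessness of each \(A_i\) does \emph{not} suppress \(\braket{A_1\cdots A_k}\): already for \(k=2\) this is \(\braket{A^2}\ge\delta'\), of order one, and no algebraic cancellation of this kind occurs. In the paper the smallness has a different origin: the shorter chains generated by the expansion --- including the self-energy terms \(\braket{[\prod_{i=1}^{j-1}(G_iA_i)]G_j}\braket{\prod_{i\ge j}(G_iA_i)}\), whose first factor contains a bare \(G_j\) not separated from \(G_1\) by any traceless matrix --- are bounded by a Cauchy--Schwarz step followed by spectral decomposition, estimating every eigenvector overlap by the ETH bound \(\abs{\braket{\bm u_a,A\bm u_b}}\lesssim N^{-1/2+\xi}\) from~\eqref{eq:eth} (proven in~\cite{2012.13215}); this is exactly where the exponent \((k-3)/2\) comes from. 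Your induction as formulated also does not close: the chains with an identity insertion (equivalently, two adjacent resolvents) fall outside your hypothesis ``all deterministic matrices traceless'' and have genuinely larger size, so the ``bookkeeping'' you defer to the end is not a technicality but the actual content; the paper avoids building such a hierarchy precisely by invoking the a priori overlap bound instead of an induction on chain length.

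There is a second, independent gap: a cumulant expansion of \(\E\braket{WG_1A_1\cdots G_kA_k}\) controls only the expectation, whereas~\eqref{eq:llawbound} is a very-high-probability statement. The fluctuating part is carried by the renormalised term \(\braket{\un{W\prod_{i}(G_iA_i)}}\), and obtaining \(\abs{\braket{\un{W\prod_i(G_iA_i)}}}\lesssim N^{\xi+(k-3)/2}\sqrt{\rho^\ast/\eta_\ast}\) with very high probability requires a high-moment (order \(2p\)) cumulant analysis of this object; the paper does not redo that work but quotes it from~\cite[Theorem~4.1, Remark~4.3]{2012.13215}. Without either citing this input or reproducing its high-moment argument, your plan cannot produce the probability statement in the proposition; the truncation of the cumulant series that you single out as the technical difficulty is comparatively minor.
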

\begin{proof}
Using \(WG-zG=I\) and~\eqref{mde} we write
 \begin{equation}\label{G m eq}
  G=m-m\un{WG}+m\braket{G-m}G
\end{equation}
where 
\[
  \un{WG} = WG + \braket{G}G
\]
denotes a renormalization of \(WG\). More generally, for functions \(f(W)\) we define 
\[
  \un{Wf(W)} := Wf(W)- \widetilde \E \widetilde W (\partial_{\widetilde W}f)(W)
\]
with \(\partial_{\wt W}\) denoting the directional derivative in direction \(\wt W\) and \(\wt W\) being an independent GUE-matrix with expectation \(\wt\E\). We now use~\eqref{G m eq} and~\eqref{local law} for \(G_1=G(z_1)\) and \(m_1=m(z_1)\) to obtain 
\begin{equation}
  \begin{split}
    \Bigl(1-\landauO[\Big]{\frac{N^\xi}{N\eta_\ast}}\Bigr)\braket*{\prod_{i=1}^k (G_i A_i)} = m_1 \braket*{A_1 \prod_{i=2}^k (G_i A_i)} - m_1 \braket*{\un{WG_1}A_1 \prod_{i=2}^k (G_i A_i)}.
  \end{split}
\end{equation}
Together with
\[ \begin{split}
  \braket*{\un{W\prod_{i=1}^k (G_i A_i)}} &= \braket*{W\prod_{i=1}^k (G_i A_i)} + \sum_{j=1}^k\wt\E \braket*{\wt W \biggl[\prod_{i=1}^{j-1} (G_iA_i)\biggr] G_{j} \wt W \prod_{i=j}^k (G_i A_i)}\\
  &= \braket*{\un{WG_1}A_1 \prod_{i=2}^k (G_i A_i)} + \sum_{j=2}^k\braket*{\biggl[\prod_{i=1}^{j-1} (G_iA_i)\biggr] G_{j}}\braket*{ \prod_{i=j}^k (G_i A_i)}
\end{split} \]
we thus have 
  \begin{equation}\label{GA eq}
    \begin{split}
      \Bigl(1-\landauO[\Big]{\frac{N^\xi}{N\eta_\ast}}\Bigr)\braket*{\prod_{i=1}^k (G_i A_i)} &= m_1 \braket*{A_1\prod_{i=2}^k (G_i A_i)} - m_1 \braket*{\un{W\prod_{i=1}^k (G_i A_i)}} \\
      &\quad + \sum_{j=2}^k\braket*{\biggl[\prod_{i=1}^{j-1} (G_iA_i)\biggr] G_{j}}\braket*{ \prod_{i=j}^k (G_i A_i)}.
    \end{split}
  \end{equation}
  We now apply the inequality~\cite[Eq.~(5.35)]{2012.13215}
  \[ \abs{\braket{XY}} \le \Bigl[\braket{X^\ast X(YY^\ast)^{1/2}}\braket{(Y^\ast Y)^{1/2}}\Bigr]^{1/2} \]
  for arbitrary matrices \(X,Y\) to \(X=\prod_{i=1}^{j-1}(G_i A_i),Y=G_j\) to obtain  
  \begin{equation*}
    \begin{split}
      \abs*{\braket*{\prod_{i=1}^{j-1}(G_i A_i)G_{j} }} &\le \eta_1^{-1/2}\Bigl(\braket*{ A_{j-1}^\ast G_{j-1}^\ast \cdots A_1^\ast \Im G_1 A_1 \cdots G_{j-1} A_{j-1} \abs{G_j}  } \braket{ \abs{G_j}}\Bigr)^{1/2} 
    \end{split}
  \end{equation*}
  from \(G^\ast G=(\Im G)/\eta\). By spectral decomposition we may further estimate with very high probability for any \(\xi>0\)
  \[
  \begin{split}
    &\abs*{\braket*{ A_{j-1}^\ast G_{j-1}^\ast \cdots A_1^\ast \Im G_1 A_1 \cdots G_{j-1} A_{j-1} \abs{G_j}  }} \\
    &= \abs*{\frac{1}{N} \sum_{\bm a} \frac{\braket{\bm u_{a_{j}},A_{j-1}^\ast \bm u_{a_{1-j}}} \cdots \braket{\bm u_{a_{-2}},A_{1}^\ast \bm u_{a_1}}\braket{\bm u_{a_{1}},A_{1} \bm u_{a_{2}}}\cdots \braket{\bm u_{a_{j-1}},A_{j-1} \bm u_{a_{j}}} }{\bigl[(\lambda_{a_{j-1}}-z_{j-1})(\lambda_{a_{1-j}}-\ov{z_{j-1}})\cdots (\lambda_{a_2}-z_2)(\lambda_{a_{-2}}-\ov{z_2})\bigr]\abs{\lambda_{a_j}-z_j}}\Im \frac{1}{\lambda_{a_1}-z_1}}\\
    &\lesssim N^{\xi+j-2} \rho(z_1)
  \end{split},\]
  from the overlap bound \(\abs{\braket{\bm u_a, A \bm u_b}}\lesssim N^{\xi-1/2}\), and where \(\sum_{\bm a}\) is the summation over the \(2j-2\) indices \(a_1,a_{\pm 2},\ldots,a_{\pm(j-1)},a_j\) and conclude 
  \begin{equation}
    \abs*{\braket*{\prod_{i=1}^{j-1}(G_i A_i)G_{j} }} \lesssim N^{\xi+j/2-1}\frac{\sqrt{\rho_1}}{\sqrt{\eta_1}}.
  \end{equation}
  Similarly we also have
  \begin{equation*}
    \abs*{\braket*{ \prod_{i=j+1}^{k} (G_i A_i)}} \lesssim N^{\xi+(k-j)/2-1},
  \end{equation*}
  and the claim follows from~\eqref{GA eq} and the bound
  \begin{equation}
    \abs*{\braket*{\un{W\prod_{i=1}^k (G_i A_i)}}} \lesssim N^\xi N^{(k-3)/2}\frac{\sqrt{\rho^\ast}}{\sqrt{\eta_\ast}}
  \end{equation}
  on the underlined term in~\cite[Theorem 4.1, Remark 4.3]{2012.13215}.
\end{proof}

\appendix

\section{Green function comparison}\label{app:GFT}
Here we briefly recall the standard Green function comparison method for eigenvector statistics. The only novelty is that in addition to the standard entry-wise local law, \(|G_{ab}(z)|\lesssim N^{\zeta+\xi}\) for \(\Im z\sim N^{-1-\zeta}\), we also need an analogous a priori bound for \((GAG)_{ab}\) that exploits the fact that \(A\) is traceless, see~\eqref{eq:impr} later.
Consider the Ornstein-Uhlenbeck flow
\begin{equation}\label{eq:OUmain}
\dif \widehat{W}_t=-\frac{1}{2}\widehat{W}_t \dif t+\frac{\dif \widehat{B}_t}{\sqrt{N}}, \qquad \widehat{W}_0=W,
\end{equation}
with \(\widehat{B}_t\) a real symmetric Brownian motion. The OU-flow~\eqref{eq:OUmain} has the effect of adding a small Gaussian component to \(W\), so that for any fixed \(T\) we can decompose 
\begin{equation}\label{eq:imprel}
\widehat{W}_T\stackrel{\mathrm{d}}{=} \sqrt{1-cT}\widetilde{W}+\sqrt{c T}U,
\end{equation}
with \(c=c(T)>0\) a constant very close to one as long as \(T\ll 1\), and \(U,\wt W\) being independent GOE/Wigner matrices. Now let \(W_t\) be the solution of the flow~\eqref{eq:matdbm} with initial condition \(W_0=\sqrt{1-cT}\wt W\), so that 
\begin{equation}\label{eq dist GFT}
W_{cT}\stackrel{\mathrm{d}}{=}\widehat{W}_T. 
\end{equation}
\begin{lemma}\label{lem:GFT}
Let \(\widehat{W}_t\) be the solution of~\eqref{eq:OUmain}, and let \(\widehat{{\bm u}}_i(t)\) be its eigenvectors. Then for any smooth test function \(\theta\) of at most polynomial growth, and any fixed \(\epsilon\in(0,1/2)\) there exists an \(\omega=\omega(\theta,\epsilon)>0\) such that for any \(i\in [\delta N, (1-\delta)N]\) (with \(\delta>0\) from Theorem~\ref{theo:flucque}) and \(t=N^{-1+\epsilon}\) it holds that 
\begin{equation}\label{eq:GFT}
\E\theta\left(\sqrt{\frac{N}{2\braket{A^2}}}\braket{\widehat{\bm u}_i(t),A\widehat{\bm u}_i(t)}\right)=\E\theta\left(\sqrt{\frac{N}{2\braket{A^2}}}\braket{\widehat{\bm u}_i(0),A\widehat{\bm u}_i(0)}\right)+\mathcal{O}\left(N^{-\omega}\right).
\end{equation}
\end{lemma}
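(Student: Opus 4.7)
The plan is to follow the standard Green function comparison strategy along the OU flow~\eqref{eq:OUmain}, as in~\cite{MR3606475}; the only new technical ingredient is an improved a priori bound on the entries of $GAG$ at a very small spectral scale, exploiting the tracelessness of $A$.

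First, I would write the eigenvector statistic as a functional of the resolvent at spectral parameter $z = E + \ii\eta$ with $\eta = N^{-1-\zeta}$ for a small $\zeta>0$ to be chosen later. Using rigidity to localise $\lambda_i$ to a window of width $N^{-1+\zeta}$ around $\gamma_i(0)$ and integrating $\Im\braket{A\,G_s(E+\ii\eta)}$ against a smooth bump $\chi_i$ supported on this window, the Poisson kernel approximation yields, with very high probability,
\[
\braket{\widehat{\bm u}_i(s), A\,\widehat{\bm u}_i(s)} = \frac{1}{\pi}\int_\R \chi_i(E)\,\Im\braket{A\,G_s(E+\ii\eta)}\,\dif E + \landauO*{N^{-100}},
\]
where $G_s = (\widehat W_s - z)^{-1}$. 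Denote the right-hand side by $F(\widehat W_s)$ and set $\Theta(W):=\theta(\sqrt{N/(2\braket{A^2})}\,F(W))$.

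Second, I would differentiate $s\mapsto\E\Theta(\widehat W_s)$ along~\eqref{eq:OUmain} by It\^o's formula and expand in cumulants of the matrix entries. The OU drift and diffusion are calibrated so that the first- and second-cumulant contributions to $\partial_s\E\Theta(\widehat W_s)$ match those of a pure Gaussian matrix and hence cancel; only cumulants of order $k\ge 3$ survive. Each such surviving contribution is a polynomial in entries of $G$ and of $(GAG)$ at the scale $\eta$, weighted by derivatives $\theta^{(j)}$. Two inputs are required to estimate these: the standard isotropic local law $|G_{ab}(E+\ii\eta)|\lesssim N^{\zeta/2+\xi}$, and the improved entrywise bound
\begin{equation}\label{eq:impr}
\max_{a,b}\abs{(GAG)_{ab}(E+\ii\eta)}\lesssim N^{C\zeta+\xi},
\end{equation}
which improves the naive Ward estimate $\eta^{-1}\sim N^{1+\zeta}$ by nearly a full factor of $N$. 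Granting~\eqref{eq:impr}, each $k$-th cumulant contributes $N^{-(k-2)/2+C'\zeta}$ to $\partial_s\E\Theta(\widehat W_s)$, and integrating over $s\in[0,t]$ with $t = N^{-1+\epsilon}$ produces the claimed $N^{-\omega}$ error, provided $\zeta$ is chosen sufficiently small compared with $\epsilon$.

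The main obstacle is proving~\eqref{eq:impr}, since Proposition~\ref{pro:llaw} and the bounds collected in~\eqref{eq:hatomega} require $\eta\gg N^{-1}$ and so do not apply directly at $\eta = N^{-1-\zeta}$. I would derive~\eqref{eq:impr} from the spectral decomposition
\[
(GAG)_{ab} = \sum_{j,k}\frac{\ov{u_j(a)}\,u_k(b)\,\braket{\bm u_j, A\bm u_k}}{(\lambda_j - z)(\lambda_k - z)},
\]
combined with the optimal overlap bound $\max_{j,k}|\braket{\bm u_j, A\bm u_k}|\le N^{-1/2+\xi}$ from~\eqref{eq:eth}, eigenvector delocalisation $|u_j(a)|\le N^{-1/2+\xi}$, and rigidity $|\lambda_j - z|\ge c\max(\eta, |j-i(E)|/N)$ with $i(E)$ the index closest to $E$. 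The near-diagonal part $|j-i(E)|,|k-i(E)|\le N^{2\zeta}$ is controlled by pointwise estimates, while the tail is treated by Cauchy--Schwarz together with the Ward identity $\sum_j |u_j(a)|^2/|\lambda_j-z|^2 = (\Im G)_{aa}/\eta$. Once~\eqref{eq:impr} is in hand, the remainder of the argument is the routine Lindeberg-swap/cumulant expansion machinery, standard in the GFT literature.
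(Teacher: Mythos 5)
Your proposal follows the same route as the paper: reduce the overlap to an averaged resolvent quantity at scale $\eta = N^{-1-\zeta}$, run the cumulant expansion along the OU flow so the first two cumulants cancel, and estimate the remainder using the tracelessness of $A$ via an improved a priori bound on $(G A G)_{ab}$. This is exactly the paper's strategy.

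However, your stated improved bound is too strong and would not survive the calculation you describe. Using only delocalisation $\abs{u_j(a)}\lesssim N^{-1/2+\xi}$, the overlap bound $\abs{\braket{\bm u_j, A\bm u_k}}\lesssim N^{-1/2+\xi}$, and rigidity, the spectral decomposition of $(GAG)_{ab}$ gives
\begin{equation*}
\abs*{(G(z_1)AG(z_2))_{ab}} \lesssim N^{1/2+\xi}\Bigl(\frac{1}{N}\sum_j\frac{1}{\abs{\lambda_j-z_1}}\Bigr)\Bigl(\frac{1}{N}\sum_j\frac{1}{\abs{\lambda_j-z_2}}\Bigr) \lesssim N^{1/2+\xi+2\zeta},
\end{equation*}
not $N^{C\zeta+\xi}$. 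The gain over the naive Ward estimate $\eta^{-1}\sim N^{1+2\zeta}$ is therefore a factor of roughly $N^{1/2}$, not \emph{nearly a full factor of} $N$ as you claim. The near-diagonal/tail decomposition you propose gives the same $N^{1/2+O(\zeta+\xi)}$; pushing the Cauchy--Schwarz/Ward step in the tail without first inserting the overlap bound actually makes things worse (it yields $N^{1}$ or larger). This error in turn propagates into your cumulant bookkeeping: the $k$-th cumulant term contributes $\sum_\alpha\abs{\kappa_k(\alpha)}\cdot\max\abs{\partial_\alpha^k R_t}\lesssim N^{2-k/2}\cdot N^{O(\zeta+\xi)}$, and for $k=3$ this is $N^{1/2+O(\zeta+\xi)}$, not the $N^{-1/2+C'\zeta}$ you wrote. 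Only after integrating over $t\in[0,N^{-1+\epsilon}]$ does one obtain $N^{-1/2+\epsilon+O(\zeta+\xi)}$, which is small precisely because the hypothesis requires $\epsilon<1/2$. Your stated bound would make the conclusion trivially hold for any $\epsilon<1$ and so masks why the restriction $\epsilon\in(0,1/2)$ appears in the lemma. Fortunately, with the corrected bound $N^{1/2+\xi+2\zeta}$ the rest of your argument goes through verbatim, since the first derivative $\abs{\partial_\alpha\sqrt N\braket{G_tA}}= N^{-1/2}\abs{(G_tAG_t)_{ba}}\lesssim N^{2\zeta+\xi}$ is still $O(N^{O(\zeta+\xi)})$ — which is all the chain-rule estimate needs.
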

With \(T=N^{-1+\epsilon}\) and \(\theta(x)=x^n\) for some integer \(n\in\N\) it now follows that
\begin{equation}\label{eq:firststep}
\begin{split}
  \E\left[\sqrt{\frac{N}{2\braket{A^2}}}\braket{{\bm u}_i,A {\bm u}_i}\right]^n&=\E\left[\sqrt{\frac{N}{2\braket{A^2}}}\braket{\widehat{\bm u}_i(T),A\widehat{\bm u}_i(T)}\right]^n+\mathcal{O}\left(N^{-c}\right)\\
  &= \E\left[\sqrt{\frac{N}{2\braket{A^2}}}\braket{{\bm u}_i(cT),A{\bm u}_i(cT)}\right]^n +\mathcal{O}\left(N^{-c}\right)\\
  &= \bm1(n\,\, \mathrm{even})(n-1)!!+\mathcal{O}\left(N^{-c}\right),
\end{split}
\end{equation} 
for some small \(c=c(n,\epsilon)>0\), with \({\bm u}_i,\wh{\bm u}_i(t),\bm u_i(t)\) being the eigenvectors of \(W,\wh W_t,W_t\), respectively, concluding the proof of Theorem~\ref{theo:flucque}. Note that in~\eqref{eq:firststep} we used Lemma~\ref{lem:GFT} in the first,~\eqref{eq dist GFT} in the second and~\eqref{eq:cltgcomp} in the third step, using that in distribution the eigenvectors of \(W_{cT}\) are equal to those of \(\wt W_{cT/(1-cT)}\) with \(\wt W_t\) being the solution to the DBM flow with initial condition \(\wt W_0=\wt W\).

\begin{proof}[Proof of Lemma~\ref{lem:GFT}]
The proof of Lemma~\ref{lem:GFT} follows from comparing expectations of products of resolvents \(G(z)\) at scales slightly below the eigenvalue spacing, i.e.\ for \(\Im z\sim N^{-1-\zeta}\). Green function comparison for eigenvectors has been presented in~\cite{MR3103909} in details and has been used in~\cite{MR3606475, MR4156609, 2005.08425}. Since this is a standard argument, we only give an outline. Let \(W_t\) be the solution of~\eqref{eq:OUmain}, with \(W_0=W\), where \(W\) is a Wigner matrix satisfying Assumption~\ref{ass:entr}. Here we dropped the hat compared to the notation used in~\eqref{eq:OUmain} to make the presentation clearer, i.e.\ we use \(W_t\) instead of \(\widehat{W}_t\), \({\bm u}_i(t)\) instead of \(\widehat{\bm u}_i(t)\), etc. From now on by \(G_t=G_t(z)\) we denote the resolvent of \(W_t\). Note that along the flow~\eqref{eq:OUmain} the first two moments of \(W\) are preserved.

Due to level repulsion, as in~\cite[Lemma 5.2]{MR3034787}, to understand \(\sqrt{N}\braket{{\bm u}_i,A {\bm u}_i}\) it is sufficient to understand functions of \(\sqrt{N}\braket{\Im G(z)A}\) with \(\Im z\) slightly  below \(N^{-1}\), i.e.\ the local eigenvalue spacing. In order to prove~\eqref{eq:GFT}, as a consequence of $\braket{A^2}\ge \delta'$, it is enough to show that
\begin{equation}\label{eq:gftres}
\sup_{E\in(-2+\delta,2-\delta)}\abs*{\E\theta(\sqrt{N}\braket{\Im G_t(z)A})-\E\theta(\sqrt{N}\braket{\Im G_0(z)A})}\lesssim N^{-\omega},
\end{equation}
for \(z=E+\ii \eta\) for some \(\zeta>0,\omega>0\) and all \(\eta\ge N^{-1-\zeta}\), c.f.~\cite[Section 4]{MR4164858} and~\cite[Appendix A]{MR3606475}. Define
\begin{equation}\label{eq:R}
R_t:=\theta(\sqrt{N}\braket{\Im G_t(z)A}),
\end{equation}
then by It\^o's formula we have
\begin{equation}\label{eq:ito}
\E \frac{\dif R_t}{\dif t}=\E\left[-\frac{1}{2}\sum_\alpha w_\alpha(t)\partial_\alpha R_t+\frac{1}{2}\sum_{\alpha,\beta}\kappa_t(\alpha,\beta)\partial_\alpha\partial_\beta R_t\right],
\end{equation}
where \(\alpha,\beta\in [N]^2\) are double indices, \(w_\alpha(t)\) are the entries of \(W_t\), and \(\partial_\alpha:=\partial_{w_\alpha}\). Here 
\begin{equation}
\kappa_t(\alpha_1,\dots,\alpha_l):=\kappa(w_{\alpha_1}(t), \dots, w_{\alpha_l}(t))
\end{equation}
denotes the joint cumulant  of \(w_{\alpha_1}(t), \dots, w_{\alpha_l}(t)\), with \(l\in \mathbf{N}\). Note that by~\eqref{eq:momentass} it follows that \(|\kappa_t(\alpha_1,\dots,\alpha_l)|\lesssim N^{-l/2}\) uniformly in \(t \ge 0\). Performing a cumulant expansion in~\eqref{eq:ito} (see~\cite[Eq. (25)]{MR4221653} for more details) we are left with
\begin{equation}\label{eq:thirdhigher}
\E\frac{\dif R_t}{\dif t}=\sum_{l= 3}^R\sum_{\alpha_1,\ldots,\alpha_l}\kappa_t(\alpha_1,\ldots,\alpha_l)\E[\partial_{\alpha_1}\cdots\partial_{\alpha_l}R_t]+\Omega(R),
\end{equation}
where \(\Omega(R)\) is an error term, easily seen to be negligible as every additional derivative gains a further factor of \(N^{-1/2}\). In order to estimate~\eqref{eq:thirdhigher} we use \(|(G_t)_{ab}|\le N^{\zeta}\) and 
\begin{equation}\label{eq:impr}
\begin{split}
\big|(G_t(z_1)AG_t(z_2))_{ab}\big|&=\left|\sum_{ij}\frac{{\bm u}_i(a)
\braket{{\bm u}_i, A{\bm u}_j}{\bm u}_j(b)}{(\lambda_i-z_1)(\lambda_j-z_2)}\right| \\
&\lesssim N^{1/2+\xi}\left(\frac{1}{N}\sum_i \frac{1}{|\lambda_i-z_1|}\right)\left(\frac{1}{N}\sum_i \frac{1}{|\lambda_i-z_2|}\right)
 \lesssim N^{1/2+\xi+2\zeta},
\end{split}
\end{equation}
which holds with very high probability for \(z_1,z_2\in\set{z,\ov z}\). In~\eqref{eq:impr} we used the eigenvector delocalisation  \(\| {\bm u}_i\|_\infty\lesssim N^{-1/2+\xi}\) (see~\cite{MR2871147}, or~\cite{2007.09585} for the optimal bound) and the optimal a priori bound
\(|\braket{{\bm u}_i, A{\bm u}_j}|\lesssim N^{-1/2+\xi}\) for traceless \(A\) by~\cite[Theorem 2.2]{2012.13215} (note that this step crucially uses that \(\braket{A}=0\), the analogous bound for a general \(A\) 
would be larger  by a factor \(\sqrt{N}\)).
We claim that for any \(l\ge 0\) it holds that
\begin{equation}\label{eq:derboundgft}
|\partial_{\alpha_1}\dots \partial_{\alpha_l}\sqrt{N}\braket{\Im G_t A}|\le N^{(l+3)(\zeta+\xi)},
\end{equation}
for any arbitrary small \(\xi>0\), with very high probability. Together with
\[ \sum_{\alpha_1,\ldots,\alpha_l} \abs{\kappa_t(\alpha_1,\ldots,\alpha_l)}\lesssim N^{2-l/2} \] we are then able to estimate \(\abs{\E \dif R_t/\dif t}\) by the chain rule to finally obtain~\eqref{eq:gftres}. 

The bound~\eqref{eq:derboundgft} for \(l=0\) follows immediately from the a priori bound \(\sqrt{N}\abs{\braket{G_t A}}\lesssim N^{\xi+\zeta}\). For \(l=1\) the first derivative yields \(\abs{\partial_{ab}\sqrt{N}\braket{G_t A}}=N^{-1/2}\abs{(G_t AG_t)_{ba}}\), and each additional derivative creates a single factor of \(G_t\), and~\eqref{eq:derboundgft} follows from estimating each factor entrywise by \(\abs{(G_t)_{ab}}\lesssim N^{\xi+\zeta}\) and~\eqref{eq:impr}. 
\end{proof}

\printbibliography%

\end{document}